\newtheorem{thmspec}{\relax}
\newtheorem{theorem}{Theorem}[section]
\newtheorem{definition}[theorem]{Definition}
\newtheorem{proposition}[theorem]{Proposition}
\newtheorem{corollary}[theorem]{Corollary}
\newtheorem{lemma}[theorem]{Lemma}
\newtheorem{remark}[theorem]{Remark}
\def\arg{\operatorname{arg}}
 \def\mes{\operatorname{mes}}
\newcommand {\Alim}{{\rm \mathcal{A}-{\rm \lim}}}
\def\Alimsup{{\rm \mathcal{A}{\rm -\limsup}}}
 \def\X{\Bbb X}
\newcommand{\cali}[1]{\mathscr{#1}}
\newcommand{\dist}{{\rm dist\ \!}}
\def\End{\operatorname{End}}
\newcommand{\Cc}{\cali{C}}
\newcommand{\Oc}{\cali{O}}
\newcommand{\C}{\mathbb{C}}
\title
{Cross theorems with singularities}
\author{Vi\^et-Anh Nguy{\^e}n and Peter Pflug}
\begin{document}

\maketitle

 \begin{abstract}
We establish  extension theorems for  separately holomorphic mappings defined on  sets of the form $W\setminus M$
with values in a complex analytic space which possesses the Hartogs extension property. Here  $W$ is   a $2$-fold
cross of arbitrary complex manifolds   and $M$ is a set of  singularities which is locally pluripolar  (resp. thin) in
fibers.
\end{abstract}

\noindent {\bf Classification AMS 2000:} Primary 32D15, 32D10

\noindent {\bf Keywords: } Cross theorem, set of singularities, holomorphic extension, plurisubharmonic measure.

\section{Introduction}

Let $D\subset X$ (resp. $G\subset Y$) be an open set, $A\subset \overline{D}$ (resp. $B\subset \overline{G}$), where
$X$ and $Y$ are complex manifolds \footnote{ In this paper complex manifolds are always assumed
    to be of finite dimension and countable at infinity.},
 and  let $M\subset \big((D\cup A)\times
B\big)\bigcup \big(A\times (G\cup B)\big). $
 The set $M_a:=\{w\in G:\ (a,w)\in M\}$,  $a\in A$, is  called
 \textit{the vertical fiber of  $M$ over  $a$} (resp. the set
$M^{b}:=\{z\in D:\ (z,b)\in M\}$, $b\in B$, is  called \textit{the horizontal fiber  of $M$  over $b$}). We say that
$M$ possesses a certain property    {\it in fibers over  $A$   (resp. $B$)} if all vertical fibers $M_a$, $a\in A$,
(resp. all horizontal fibers $M^b$, $b\in B$) possess  this property.

\smallskip

The main purpose of this work is to  study  the following PROBLEM:

\medskip
{\it
 Let $X$, $Y$, $D$, $G$, $A$, and $B$ be as above,
and let  $Z$ be a complex analytic space~\footnote{All complex analytic spaces are assumed to be reduced, irreducible,  and countable at infinity.}\!.  Define the cross
\begin{equation*}
 W:=\big((D\cup A)\times B\big) \bigcup      \big(A\times (G\cup B)\big) .
 \end{equation*}

 We want to determine  an  ``optimal" open subset  of $ X\times Y,$ denoted by  $\widehat{\widetilde{W}},$ which
 is characterized by the following property:

 Let $M\subset W$ be  a  subset  which is  relatively closed and  locally  pluripolar   (resp. thin)
\footnote{    The notion  of local pluripolarity and  thinness  will be  recalled  in Section
\ref{Section_preliminary} below.} in fibers  over $A$  and $B$ ($M=\varnothing$ is  allowed). Then  there exists  a
new  set of singularities $\widehat{M} \subset \widehat{\widetilde{W}},$ which is, in some sense, of the  same
structure as $M$ and which
 fulfills the following property:

 For every mapping $f:W\setminus M\longrightarrow Z$ satisfying, in essence, the following condition:

$$
\begin{aligned}
 f(a,\cdot)\in&\Cc((G\cup B)\setminus M_a,Z)\cap\Oc(G\setminus M_a,Z),\quad a\in A, \\
 f(\cdot,b)\in&\Cc((D\cup A)\setminus M^b,Z)\cap\Oc(D\setminus M^b,Z),\quad b\in B,\quad \footnotemark
\end{aligned}
$$
\footnotetext{$\Cc(D',Z)$ (resp. $\Oc(D',Z)$) denotes the set of all continuous  (resp. holomorphic) mappings from a
topological  space  (resp. a complex  manifold)  $D'$ to $Z$.}

there exists an  $\hat{f}\in\Oc(\widehat{\widetilde{W}}\setminus \widehat{M},Z)$  such that for ``all" $(\zeta,\eta)\in
W\setminus M$, $\hat{f}(z,w)$ tends to $f(\zeta,\eta)$ as $(z,w)\in\widehat{\widetilde{W}}\setminus\widehat{M}$ tends,
in some sense, to $(\zeta,\eta)$. }

  \medskip

We briefly recall the very recent developments \footnote{For a more detailed history see \cite{nv4}.} around this
PROBLEM.

The case when  $M=\varnothing$ has  been  thoroughly investigated in  the work \cite{nv1,nv2} of the first author.
These articles also show that  the natural ``target spaces" $Z$ for obtaining a satisfactory answer to the above
PROBLEM are the ones which possess the {\it Hartogs extension property} \footnote{ This notion will be formulated in
Subsection \ref{HEP}  below.}.

\smallskip

The case  where $X$ and $Y$ are Riemann domains (over $\C^n$), $A\subset D,$ $B\subset G,$ and  $Z=\C$  has been
completed  in some joint-articles
  of M. Jarnicki and the second author  (see \cite{jp2,jp3,jp4,jp5}).

\smallskip

  Therefore, it is  reasonable to conjecture that a positive  solution to the PROBLEM may exist when the ``target
  space" $Z$
possesses  the Hartogs extension property. As  our first attempt  towards  an affirmative answer to the PROBLEM,  we
solve  in \cite{pn5}  the following special case:  $X=Y=\C,$ $D$ and $G$ are copies of the open unit disc in $\C$,
  $ A$ (resp. $B$) is a measurable  subset of $\partial D$ (resp. $\partial G$) of positive
  one-dimensional Lebesgue measure,
     $ Z=\C$,  and  $M$  is
 polar (resp. discrete) in fibers  over $A$  and $B.$

\smallskip

The main purpose of this article  is to  verify the above conjecture in  its full generality. Our proof is  geometric
in nature. Indeed, our method consists in  using holomorphic discs, and it  is based on the works in
\cite{jp3,pn5,nv1,nv2}. Moreover, the novelty of this new approach is that \textit{it does not use the classical
method of doubly orthogonal bases of Bergman type.} It is  worthy to note here that  most of  the previous works in
the subject of separate holomorphy make  use  of the latter method.

\smallskip

\indent{\it{\bf Acknowledgment.}}
   The paper was written while the first  author  was visiting the  Abdus Salam International
Centre
 for Theoretical Physics
in Trieste  and the Korea Institute for Advanced Study in Seoul.
 He wishes to express his gratitude to these organizations.

\section{Preliminaries and the statement of the main result} \label{Section_preliminary}

First  we  recall   some   notions developed in \cite{nv2}   such as  systems of  approach
regions for an open set in a  complex manifold, and  the corresponding plurisubharmonic measures.
These will provide the  framework for an  exact  formulation of the PROBLEM and for  our final solution.

 \subsection{Approach regions, local pluripolarity and plurisubharmonic measure}\label{Subsection_approach_regions}

\begin{definition}\label{defi_approach_region}
  Let $X$ be a complex manifold and  $D\subset X$ an open subset.
  A  {\rm system of approach regions} for $D$ is a collection
  $\mathcal{A}=\big(\mathcal{A}_{\alpha}(\zeta)\big)_{\zeta\in\overline{ D},\  \alpha\in I_{\zeta}}$
  ( $I_{\zeta}\neq\varnothing$ for all  $\zeta\in\partial D$) of open subsets of $D$
  with the following properties:
  \begin{itemize}
  \item[(i)] For all $\zeta\in D,$ the system $\big(\mathcal{A}_{\alpha}(\zeta)\big)_{ \alpha\in I_{\zeta}}$
  forms a basis of open neighborhoods of $\zeta$  (i.e., for any  open neighborhood $U$  of a point  $\zeta\in
  D,$
  there is an $ \alpha\in I_{\zeta}$ such that  $\zeta\in \mathcal{A}_{\alpha}(\zeta)\subset U$).
  \item[(ii)] For  all $\zeta\in\partial D$ and
     $\alpha\in I_{\zeta},$
  $\zeta\in \overline{\mathcal{A}_{\alpha}(\zeta)}.$
  \end{itemize}

 Moreover, $\mathcal{A}$  is
said to be {\rm canonical} if it satisfies (i) and the following  property (which is stronger than (ii)):
  \begin{itemize}
  \item[(ii')]
For every point  $\zeta\in \partial D,$
  there is  a basis of open neighborhoods  $(U_{\alpha})_{\alpha\in I_{\zeta}}$ of $\zeta$  in $X$ such that
   $ \mathcal{A}_{\alpha}(\zeta)=U_{\alpha}\cap D,$  $\alpha\in I_{\zeta}.$
  \end{itemize}
 $\mathcal{A}_{\alpha}(\zeta)$ is often
  called an {\rm approach region} at $\zeta.$
  \end{definition}

  In what follows we fix an open subset $D\subset X$ and a  system of approach regions
  $\mathcal{A}=\big(\mathcal{A}_{\alpha}(\zeta)\big)_{
  \zeta\in\overline{ D},\
   \alpha\in I_{\zeta}}$ for $D.$

 For every function $u:\ D\longrightarrow [-\infty,\infty),$ let
\begin{equation*}
 (\Alimsup u)(z):=
\sup\limits_{\alpha\in I_{z}}\limsup\limits_{ \mathcal{A}_{\alpha}(z)\ni w\to z}u(w),\quad  z\in\overline{D}.
\end{equation*}
Therefore,
\begin{equation*}
 (\Alimsup u)(z)=
 \limsup\limits_{D\ni w\to  z} u(w), \quad  \text{if}\ z\in D,
\end{equation*}
i.e.
 $(\Alimsup u)|_D$ coincides with the usual
{\it upper semicontinuous regularization}  of $u$  in case $u$ is locally bounded from above on $D$.

For a set  $A\subset \overline{D}$ put
\begin{equation*}
h_{A,D}:=\sup\left\lbrace u\ :\  u\in\mathcal{PSH}(D),\ u\leq 1\ \text{on}\ D,\
   \Alimsup u\leq 0\ \text{on}\ A    \right\rbrace,
\end{equation*}
where $\mathcal{PSH}(D)$ denotes the cone  of all functions plurisubharmonic on $D.$

A set $A\subset D$ is said to be {\it pluripolar} in $D$ if there is  $u\in \mathcal{PSH}(D)$ such that $u$ is not
identically $-\infty$ on every connected component of $D$ and $A\subset \left\lbrace z\in D:\
u(z)=-\infty\right\rbrace.$   A set  $A\subset D$ is said to be {\it locally pluripolar} in $D$ if  for any $z\in A,$
there is an open neighborhood $V\subset D$ of $z$ such that $A\cap V$ is pluripolar in $V.$ A set $A\subset D$ is said
to be {\it non-pluripolar} (resp. {\it non-locally  pluripolar}) if it is not pluripolar (resp. not locally
pluripolar). According to a classical result of Josefson and Bedford (see
\cite{jo}, \cite{be}), if $D$ is a Riemann domain over a Stein manifold, then   $A\subset D$ is   locally  pluripolar
if and only if it is pluripolar.

\begin{definition}\label{defi_relative_extremal}
 The
  {\rm relative extremal function of $A$  relative to $D$} is
 the function $ \omega(\cdot,A,D)$
 defined by
 \begin{equation*}
 \omega(z,A,D)=\omega_{\mathcal{A}}(z,A,D):= (\Alimsup h_{A,D})(z),\qquad  z\in\overline D.\quad \footnotemark
\end{equation*}
\end{definition}
\footnotetext{Observe that this function depends on the system of approach regions.}
 Note that when $A\subset D,$  Definition \ref{defi_relative_extremal}
   coincides with the classical
definition of Siciak's  relative extremal function for $z\in D$.

Next, we say that a  set  $A\subset \overline{D}$ is {\it locally pluriregular at a point $a\in \overline{A}$}   if
$\omega(a,A\cap U,D\cap U)=0$ for  all open neighborhoods $U$ of $a$, where the system of approach regions for $D\cap
U$ is given by $\mathcal{A}|_{D\cap U}:=(\mathcal{A}_\alpha(z)\cap U)_{z\in\overline{D\cap U},\; \alpha\in I_z}$.
Moreover, $ A$ is said to be {\it locally pluriregular } if it is locally pluriregular at all points $a\in A.$ It
should be noted from  Definition \ref{defi_approach_region} that  if $a\in \overline{A}\cap D$, then the property of
local pluriregularity of $A$ at $a$  does not depend on the system of approach regions $\mathcal{A},$ while the
situation is different when  $a\in \overline{A}\cap \partial D$: then the property does depend on $\mathcal{A}.$

 We denote by $A^{\ast}$ the following set
 \begin{equation*}
  (A\cap\partial D)\bigcup\left\lbrace   a\in \overline{A}\cap D:\ A\ \text{is locally pluriregular at}\ a
  \right\rbrace.
  \end{equation*}
If $A\subset D$ is non-locally pluripolar, then  a classical result of Bedford and Taylor (see \cite{be,bt}) says that
$A^{\ast}$ is locally pluriregular
 and  $A\setminus A^{\ast}$ is locally  pluripolar.
Moreover, when $A\subset D,$  $A^{\ast}$ is locally  of type $\mathcal{G}_{\delta},$ that  is, for every $a\in
A^{\ast}$ there is an open  neighborhood $U\subset D$ of $a$ such that $A^{\ast}\cap U$ is  a countable intersection
of open sets.

Now we are  in the position to introduce the following  version of a plurisubharmonic measure.
\begin{definition}\label{defi_pluri_measure}
For a set $A\subset \overline{D},$ let $\widetilde{A}=\widetilde{A}(\mathcal{A}):=\bigcup\limits_{P\in \mathcal{E}(A)}
P,$ where
\begin{equation*}
\mathcal{E}(A)=\mathcal{E}(A,\mathcal{A}):=\left\lbrace P\subset \overline{D}:\  P\ \text{is  locally pluriregular,}\
\overline{P}\subset  A^{\ast} \right\rbrace.\;\footnote{ Note that $\overline{P}\subset (A\cap\partial D)\cup (A\cap
D)^\ast$.}
\end{equation*}
 The {\rm  plurisubharmonic measure of $A$  relative to $D$} is
 the function $\widetilde{\omega}(\cdot,A,D)$
 defined by
\begin{equation*}
\widetilde{\omega}(z,A,D):=  \omega(z,\widetilde{A},D),\qquad  z\in \overline{D}.
\end{equation*}
\end{definition}

It is worthy to remark that $\widetilde{\omega}(\cdot,A,D)|_D\in\mathcal{PSH}(D)$ and $0\leq
\widetilde{\omega}(z,A,D)\leq 1,\ z\in  D.$  Obviously, if $\widetilde A\neq\varnothing$, then $\widetilde A$ is
locally pluriregular; in particular,
\begin{equation}\label{eq_defi_pluri_measure}
 \widetilde{\omega}(z,A,D)=0,\qquad z\in
\widetilde{A}.
\end{equation}
 An example in \cite{ah} shows  that, in general,
$\omega(\cdot,A,D)\not=\widetilde{\omega}(\cdot,A,D)$ on $D.$

Now  we compare the plurisubharmonic measure $ \widetilde{\omega}(\cdot,A,D)$ with Siciak's relative  extremal
function $ \omega(\cdot,A,D).$ For the moment, we only focus  on the case  where  $A\subset D.$

  If $A$ is an open subset of an arbitrary complex manifold  $D$, then it can be shown that
\begin{equation*}
\widetilde{\omega}(z,A,D)=  \omega(z,A,D),\qquad  z\in D.
\end{equation*}
If $A$  is  a (not necessarily open) non-locally pluripolar subset of an arbitrary complex manifold $D,$ then we have,  by Proposition 7.1 in \cite{nv2}, % $\omega(\cdot,A,D)\leq\widetilde{\omega}(\cdot,A,D)$ on $D.$  Moreover, % if $A\subset D$ then % $A\setminus  \widetilde{A}$ is locally pluripolar in $D$ and % Definition  \ref{defi_pluri_measure} coincides  with the one given in \cite[page 222]{nv1}, that is,
  \begin{equation*}
\widetilde{\omega}(z,A,D)=  \omega(z,A^{\ast},D),\qquad  z\in D.
\end{equation*}
On the other hand,  if, morever,  $D$  is  a  bounded  open subset of  $\C^n$,   then we  have (see, for  example,
  Lemma 3.5.3 in \cite{jp1}) $
\omega(z,A,D)=  \omega(z,A^{\ast},D),$ $ z\in D.$ Consequently, under  the last assumption,
\begin{equation*}
\widetilde{\omega}(z,A,D)=  \omega(z,A,D),\qquad  z\in D.
\end{equation*}
The case where $A\subset\partial D$ has been investigated  in \cite{nv2,nv3}.
 Our discussion  shows that, at least  in the case  where  $A\subset D$, the notion of  the plurisubharmonic measure
is  a  good candidate  for  generalizing  Siciak's relative  extremal function  to the manifold context in the theory
of separate holomorphy.

 For a good background of the pluripotential
theory, see the books  \cite{jp1} or  \cite{kl}. For a  more detailed
discussion on  systems of approach regions as well as their  corresponding plurisubharmonic measure, see \cite{nv1}.

\subsection{Cross, separate holomorphicity, and $\mathcal{A}$-limit.} \label{subsection_cross}

Let $X,\ Y$  be two complex manifolds,
  let $D\subset X,$ $ G\subset Y$ be two nonempty open sets, let
  $A\subset \overline{D}$   and  $B\subset \overline{G}.$
  Moreover, $D$  (resp.  $G$) is equipped with a
  system of approach regions
  $\mathcal{A}(D)=\big(\mathcal{A}_{\alpha}(\zeta)\big)_{\zeta\in\overline{D},\  \alpha\in I_{\zeta}}$
  (resp.  $\mathcal{A}(G)=\big(\mathcal{A}_{\alpha}(\eta)\big)_{\eta\in\overline{G},\  \alpha\in
  I_{\eta}}$).\;\footnote{ In fact we should have written $I_\zeta(D)$,
  resp. $I_\eta(G)$; but we skip $D$ and $G$ here to make the notions
  as simple as possible.}
 We define
a {\it $2$-fold cross} $W,$  its {\it  interior} $W^{\text{o}}$ and its {\it  regular part} $\widetilde{W}$ (with
respect to $\mathcal{A}(D)$ and $\mathcal{A}(G)$) as
\begin{eqnarray*}
W &=&\X(A,B; D,G) :=\big((D\cup A)\times B\big)\bigcup\big (A\times(B\cup G)\big),\\ W^{\text{o}}
&=&\X^{\text{o}}(A,B; D,G) := (A\times  G)\cup (D\times B),\\ \widetilde{W} &=&\widetilde{\X}(A,B;D,G) :=\X(\widetilde
A,\widetilde B;D,G).
\end{eqnarray*}
Moreover, put
\begin{eqnarray*}
\omega(z,w)&:=&\omega(z,A,D)+\omega(w,B,G),\qquad (z,w)\in D\times G,\\
\widetilde{\omega}(z,w)&:=&\widetilde{\omega}(z,A,D)+\widetilde{\omega}(w,B,G),\qquad (z,w)\in D\times G.
\end{eqnarray*}

For a $2$-fold cross $W :=\X(A,B; D,G)$ let
\begin{equation*}
\widehat{W}:=\widehat{\X}(A,B;D,G) =\left\lbrace (z,w)\in D\times G:\ \omega(z,w)  <1 \right\rbrace.
\end{equation*}
 Therefore,  we
obtain
\begin{equation*}
 \widehat{\widetilde{W}} =\widehat{\X}(\widetilde{A},\widetilde{B};D,G)
 =\left\lbrace (z,w)\in D\times G :\  \widetilde{\omega}(z,w)<1
\right\rbrace.
\end{equation*}

Let $Z$ be a complex analytic space and $M\subset W$ a subset which is  relatively  closed in fibers over $A$ and
$B.$
 We say that a mapping
$f:W^{\text{o}}\setminus M \longrightarrow Z$ is {\it separately holomorphic}
  and write $f\in\Oc_s(W^{\text{o}}\setminus M ,Z),$   if,
 for all $a\in A $ (resp.  $b\in B$)
 the mapping $f(a,\cdot)|_{G\setminus M_a}$  (resp.  $f(\cdot,b)|_{D\setminus M^b}$)  is holomorphic.

 We say that a mapping $f:\  W\setminus M \longrightarrow Z$
   is  {\it separately continuous}
and write
 $f\in \Cc_s\Big( W\setminus M,Z  \Big)$
 if,
 for all $a\in A$ (resp.  $b\in B$)
 the mapping $f(a,\cdot)|_{(G\cup B)\setminus M_a   }$  (resp.  $f(\cdot,b)|_{(D\cup A)\setminus  M^b}$)  is
 continuous.

Let  $\Omega$ be  an  open subset of $D\times  G.$ A  point $(\zeta,\eta)\in\overline{D}\times\overline{G}$ is  said
to be an {\it end-point} of $\Omega$  with  respect to
 $\mathcal{A}=\mathcal{A}(D)\times \mathcal{A}(G)$ if for any $(\alpha,\beta)\in I_{\zeta}\times I_{\eta}$ there
 exist  open neighborhoods $U$ of $\zeta$
 in $X$ and $V$ of $\eta$ in $Y$   such that
  \begin{equation*}
  \Big(U\cap \mathcal{A}_{\alpha}(\zeta)\Big)   \times \Big(V\cap \mathcal{A}_{\beta}(\eta)
  \Big)\subset \Omega.
  \end{equation*}
 The  set  of  all end-points  of $\Omega$  is  denoted  by  $\End(\Omega).$

 It follows  from  (\ref{eq_defi_pluri_measure}) that if $\widetilde{A},\widetilde{B}\not=\varnothing$,
  then $  \widetilde{W}\subset \End(
 \widehat{\widetilde{W}}).$

Let $S$ be  a relatively closed  subset of $\widehat{\widetilde{W}}$  and  let $
(\zeta,\eta)\in\End(\widehat{\widetilde{W}}\setminus  S).$    Then a mapping $f:\ \widehat{\widetilde{W}}\setminus
S\longrightarrow Z$ is said to {\it admit the $\mathcal{A}$-limit $\lambda$ at $ (\zeta,\eta),$}  and one writes
\begin{equation*}
(\Alim f)(\zeta,\eta)=\lambda,\qquad\footnote{ Note that here $\mathcal{A}=\mathcal{A}(D)\times \mathcal{A}(G).$}
\end{equation*}
 if, for all $\alpha\in I_{\zeta},\  \beta\in I_{\eta},$
\begin{equation*}
\lim\limits_{ \widehat{\widetilde{W}}\setminus S\ni (z,w)\to (\zeta,\eta),\ z\in \mathcal{A}_{\alpha}(\zeta),\  w\in
\mathcal{A}_{\beta}(\eta)}f(z,w)=\lambda.
\end{equation*}

We conclude this introduction with a notion we need in the sequel.  Let   $\mathcal{M}$ be  a topological  space. A
mapping  $f:\ \mathcal{M}\longrightarrow  Z$ is said to be {\it bounded} if there exists an open neighborhood $U$ of
$f(\mathcal{M})$ in $Z$ and a holomorphic embedding $\phi$ of $ U $ into a bounded  polydisc of $ \C^k$  such that
$\phi(U)$ is an analytic set in this polydisc.
      $f$ is said to be {\it locally bounded along} $\mathcal{N}\subset \mathcal{M}$ if
for every point $z\in \mathcal{N},$ there is an open neighborhood $U$ of $z$ (in $\mathcal{M}$) such that
 $f|_{U}:\ U \longrightarrow Z$ is bounded.
 $f$ is said to be {\it locally bounded} if  it is so for  $\mathcal{N}= \mathcal{M}.$
  It is clear that,  if $Z=\C$, then the above notions of boundedness coincide with the usual ones.

\subsection{Hartogs extension property.}\label{HEP}

We recall here  the following notion (see,  for example, Shiffman \cite{sh1} and a result by Ivashkovich \cite{iv1}).
For $0<r<1,$ the {\it Hartogs figure}, denoted by $H(r),$ is given by
 \begin{equation*}
H(r):=\left\lbrace (z_1,z_2)\in E^2: \  |z_1|<r \ \ \text{or}\ \ |z_2| >1-r \right\rbrace,
 \end{equation*}
where, in this  article,  $E$ always denotes the open  unit disc of $\C$.

\begin{definition}\label{defi_HEP}
A complex analytic space $Z$ is said to {\rm possess the Hartogs extension property} if  every  mapping
$f\in\Oc(H(r),Z)$ extends to a  mapping $\hat{f}\in\Oc(E^2,Z)$, $r\in (0,1)$.
\end{definition}

We  mention  an important characterization due to Shiffman (see \cite{sh1}).
\begin{theorem}\label{thm_Shiffman}
A complex analytic space $Z$   possesses the Hartogs extension property
    if and only if  for every subdomain $D$ of any Stein manifold $\mathcal{M},$ every mapping
    $f\in\Oc(D, Z)$ extends to a  mapping $\hat{f}\in
    \Oc(\widehat{D},Z) ,$   where $\widehat{D}$ is the envelope of
 holomorphy\footnote{ For the notion of the envelope of holomorphy, see, for example, \cite{jp1}.}
of $D.$
\end{theorem}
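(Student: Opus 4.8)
The plan is to prove Shiffman's characterization (Theorem \ref{thm_Shiffman}) by establishing the two implications separately. For the easier direction, suppose $Z$ has the property that holomorphic mappings on subdomains of Stein manifolds extend to their envelopes of holomorphy. A Hartogs figure $H(r)\subset E^2$ is a subdomain of the Stein manifold $\C^2$, and the envelope of holomorphy of $H(r)$ is the bidisc $E^2$ itself (this is the classical one-variable-to-two-variable Hartogs phenomenon). Hence every $f\in\Oc(H(r),Z)$ extends to $\hat f\in\Oc(E^2,Z)$, which is exactly the Hartogs extension property. So this direction is essentially a matter of identifying the envelope of $H(r)$.

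The substantial direction is to show that the Hartogs extension property implies extension to envelopes of holomorphy over all Stein manifolds. First I would recall that for any domain $D$ in a Stein manifold $\Mc$, the envelope of holomorphy $\widehat D$ exists as a (possibly multi-sheeted) Riemann domain spread over $\Mc$, and that $\widehat D$ is itself Stein. The key structural fact I would invoke is that the envelope $\widehat D$ can be built up from $D$ by a (transfinite or at least exhausting) sequence of elementary Hartogs-figure fillings: every point of $\widehat D\setminus D$ is reached by successively adjoining the bidisc-hulls of biholomorphic copies of Hartogs figures. The plan is to show that the collection of points to which a given $f\in\Oc(D,Z)$ holomorphically extends is both open and closed in $\widehat D$, and nonempty, hence equals all of $\widehat D$.

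The technical heart is a local extension-and-uniqueness step. At each stage I would fix a biholomorphism of a neighborhood onto a standard bidisc $E^2$ carrying the already-filled region onto a Hartogs figure $H(r)$; the Hartogs extension property then furnishes a holomorphic extension across the missing bidisc part. The two points requiring care are: (a) \emph{coherence} — the locally produced extensions must agree on overlaps so as to glue into a single global mapping into $Z$; this follows from the identity principle for holomorphic mappings into a complex analytic space, applied on connected open sets where two extensions already coincide on a nonempty open subset; and (b) \emph{closedness} of the extension locus, which is where one must ensure that a boundary point of the extension region again admits a Hartogs figure straddling it, so that the property applies once more. I expect step (b) — organizing the filling process so that it genuinely exhausts $\widehat D$ while maintaining a valid $Z$-valued map at every stage — to be the main obstacle, since it requires the precise geometry of envelopes of holomorphy over Stein manifolds rather than just over $\C^n$. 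For this I would lean on the structure theory of Riemann domains and the cited result of Ivashkovich \cite{iv1}, which handles exactly the envelope-filling mechanism for mappings into spaces with the Hartogs property.
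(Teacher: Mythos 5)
The paper does not prove this statement at all: Theorem \ref{thm_Shiffman} is quoted verbatim as a known result of Shiffman and supported only by the citation \cite{sh1}, so there is no internal proof to compare your argument against. Judged on its own, your sketch follows the standard route. The easy direction is correct and complete: $H(r)$ is a subdomain of the Stein manifold $\C^2$ whose envelope of holomorphy is $E^2$, so the envelope-extension property specializes to Definition \ref{defi_HEP}. For the hard direction you have correctly located the crux, namely the identification of $\widehat{D}$ with the ``Hartogs hull'' of $D$, i.e.\ the result of a (transfinite) process of adjoining filled Hartogs figures inside charts; this identification is a nontrivial theorem resting on the Docquier--Grauert characterization of Stein Riemann domains over Stein manifolds, and you defer it to the literature (\cite{iv1}, or \cite{jp1}) rather than proving it, which is the same level of rigor as the paper itself.

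One caveat: the ``open and closed'' formulation, as literally written, does not work. The set of points of $\widehat{D}$ to which $f\in\Oc(D,Z)$ extends is open, but there is no reason an arbitrary boundary point of that set should be straddled by a Hartogs figure lying in the already-extended region, so closedness cannot be argued pointwise. The correct organization is the one you gesture at in your step (b): either run the extension along the transfinite filling chain that \emph{defines} the Hartogs hull (extending $f$ at each successor step by the Hartogs extension property of $Z$ and at limit steps by the identity principle for maps into a reduced complex space), or pass to the maximal domain of existence $\widetilde{D}$ of $f$ as a Riemann domain over $\mathcal{M}$, observe that maximality plus the Hartogs extension property forces $\widetilde{D}$ to be Hartogs pseudoconvex, conclude by Docquier--Grauert that $\widetilde{D}$ is Stein, and deduce that it dominates $\widehat{D}$. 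Either repair is standard, so I would call this a presentational gap rather than a fatal one, but as stated the closedness step would fail.
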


In the light of this result,
 the natural ``target spaces" $Z$ for obtaining
satisfactory answers to the PROBLEM are the complex analytic spaces satisfying the Hartogs extension property.

\subsection{Statement of the  main result}

Recall that a subset $S$ of a complex  manifold $\mathcal{M}$ is said to be  {\it thin} if  for    every point $x\in\mathcal{M}$  there  are a connected
 neighborhood $U=U(x)\subset\mathcal{M}$ and  a holomorphic  function $f$ on $U,$ not identically  zero,
 such that  $U\cap S\subset f^{-1}(0).$ We are now ready to state our main result.

\renewcommand{\thethmspec}{ Main Theorem}
  \begin{thmspec}
  Let $X,\ Y$  be two complex manifolds,
  let $D\subset X,$ $ G\subset Y$ be two  open sets,  and let
  $A$ (resp. $B$) be a subset of  $\overline{ D}$ (resp.
  $\overline{ G}$).  $D$  (resp.  $G$) is equipped with a
  system of approach regions
  $\big(\mathcal{A}_{\alpha}(\zeta)\big)_{\zeta\in\overline{ D},\  \alpha\in I_{\zeta}}$
  (resp.  $\big(\mathcal{A}_{\beta}(\eta)\big)_{\eta\in\overline{ G},\  \beta\in I_{\eta}}$).
  Suppose in addition that $A=A^{\ast},$ $B=B^{\ast}$
  \footnote{     It is  worthy to note that this assumption is  not so restrictive since
  we know  from  Subsection \ref{Subsection_approach_regions} that $A\setminus A^{\ast}$  and   $ B\setminus
  B^{\ast}$
  are locally pluripolar for arbitrary sets $A\subset\overline{D},$ $B\subset\overline{G}$.    }
  and  that  $\widetilde{\omega}(\cdot,A,D)<1$ on $D$ and  $\widetilde{\omega}(\cdot,B,G)<1$ on $G.$
   Let $Z$ be a complex analytic  space possessing the  Hartogs extension property.
  Let $M$ be  a relatively closed  subset of  $W$  with the following properties:
\begin{itemize}
\item[$\bullet$] $M$ is   thin  in  fibers  (resp.   locally   pluripolar in fibers) over  $A$ and  over $B;$
\item[$\bullet$] $M\cap \big((A\cap\partial D)\times B\big)= M\cap \big( A\times (B\cap\partial G)\big)=\varnothing.$
\end{itemize}
Then  there exists a relatively closed    analytic  (resp. a  relatively  closed locally  pluripolar) subset
 $\widehat{M}$ of $\widehat{\widetilde{W}},$   $\widehat{M}\cap \widetilde{W}\subset M$ \footnote{ Note that
 if $\widetilde A\cap
 D=\varnothing$ and $\widetilde B\cap G=\varnothing$, then this intersection is empty.} and
  $\widetilde{W}\setminus M\subset  \End(\widehat{\widetilde{W}}\setminus \widehat{M}),$  and
 for  every mapping    $f:\ W\setminus M\longrightarrow Z$
   satisfying the following  conditions:
   \begin{itemize}
   \item[(i)]    $f\in\Cc_s(W\setminus M,Z)\cap \Oc_s(W^{\text{o}}\setminus M,Z);$
    \item[(ii)] $f$ is locally bounded   along  $\X(A\cap\partial D,B\cap\partial G;D,G)
    \setminus M;$  \footnote{
    It follows from  Subsection \ref{subsection_cross}  that $$ \X\big (A\cap\partial D,B\cap\partial G;D,G\big
    )=
   \big( (D\cup A)\times (B\cap\partial G) \big) \bigcup
   \big((A\cap\partial D)\times ( G\cup B)\big) .$$}
    \item[(iii)]          $f|_{(A\times B)\setminus M}$ is  continuous at  all  points of
   $(A\cap\partial D)\times (B\cap \partial G),$
    \end{itemize}
     there exists  a unique  mapping
$\hat{f}\in\Oc(\widehat{\widetilde{W}}\setminus \widehat{M} ,Z)$ which  admits the $\mathcal{A}$-limit $f(\zeta,\eta)$
at  every point
  $(\zeta,\eta)\in  \widetilde{W}\setminus M  .$
\end{thmspec}

Although our main  result has been stated only  for the case  of a $2$-fold cross, it can be
also formulated   for the general case of an $N$-fold cross  with $N\geq 2$  (see also \cite{jp3, nv1,pn1}). It
remains an open question whether $\widehat{\widetilde {W}}$ is the maximal extension region of $W$ for the family of
mappings discussed in the Main Theorem  (for a special case see \cite{pn4}). Various  applications of the Main Theorem
will be given in Section  \ref{section_applications} below. It is  possible  to obtain  a generalization of  the Main
Theorem  in the  case  where $M$ is  not necessarily  closed  in $W.$ Indeed, it suffices  to   make  use of the works
\cite{jp6,jp5} and combine them with  our present  method.

Before going further we say some words  about the exposition of the paper. We only  give  the proof of the Main
Theorem  for the case where the set of singularities $M$ is locally   pluripolar in fibers. It is therefore left to
the interested reader to treat the  case where $M$ is thin  in  fibers. On the other hand, as in
any  article of  holomorphic extension, there  are always two parts: describing the method of extension and
justifying the gluing process. Since our primary aim is to make the article as compact as possible, we focus more on
the way we extend the mappings than the gluing process.
 Throughout the paper, {\bf $Z$ always denotes  a complex analytic
space possessing the Hartogs extension property.}

\section{Auxiliary results}

 First  we recall and prove  some auxiliary results. From \cite{jp3} we extract the following particular case of a general cross theorem with singularities which will be needed in the future.

\begin{theorem} \label{thm_Jarnicki_Pflug}
 Let $X=\C^n$  and  $Y=\C^m,$
  let $D\subset X,$ $ G\subset Y$ be two  bounded domains, let
  $A\subset D$  and $B\subset G$ be non-pluripolar  subsets.
  Let  $M$ be a relatively closed subset of $W$  such that $M$ is    pluripolar
  in fibers over $ A$ and  $ B.$

   Then there exists a relatively closed pluripolar  set $\widehat{M}\subset \widehat{W}$
   such that:
   \begin{itemize}
   \item[$\bullet$]
   $\widehat{M}\cap W\cap  \widetilde{W}\subset M;$
   \item[$\bullet$]
   for every  mapping  $f\in \Oc_s(W\setminus M,Z),$
 there exists a unique mapping
$\hat{f}\in\Oc(\widehat{W}       \setminus \widehat{M},Z )$ such that
 $\hat{f}=f$ on $ (W\cap   \widetilde{W})\setminus M.$
 \end{itemize}
\end{theorem}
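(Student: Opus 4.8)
The plan is to derive the $Z$-valued statement from the scalar ($Z=\C$) cross theorem with pluripolar singularities, which is the content of \cite{jp3}, and then to promote the scalar extension to a $Z$-valued one by means of Shiffman's characterization of the Hartogs extension property (Theorem \ref{thm_Shiffman}). The decisive structural point is that the propagated singular set $\widehat{M}$ is \emph{intrinsic}: in the Jarnicki--Pflug framework it is defined by a pluripotential-theoretic recipe depending only on $M$, $D$, $G$, $A$, $B$, and not on the extended function, so I may fix it once and for all from the scalar theory. Applying the results of \cite{jp3} with $Z=\C$ therefore yields a relatively closed pluripolar set $\widehat{M}\subset\widehat{W}$ with $\widehat{M}\cap W\cap\widetilde{W}\subset M$, across which every scalar separately holomorphic function on $W\setminus M$ extends to $\widehat{W}\setminus\widehat{M}$.

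First I would record the topological facts needed for the gluing. The set $\widehat{W}$ is a connected open subset of $\C^{n+m}$, and since $\widehat{M}$ is relatively closed and pluripolar in it, the complement $\widehat{W}\setminus\widehat{M}$ is still connected (a relatively closed pluripolar set does not locally, hence not globally, disconnect a domain). Because $A$ and $B$ are non-pluripolar, the set $(W\cap\widetilde{W})\setminus M$ is a nonempty subset of $\widehat{W}\setminus\widehat{M}$ on which the sought extension must coincide with $f$; this will serve as the ``seed'' for analytic continuation and for the identity principle.

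Next comes the $Z$-valued extension, which is the heart of the matter. Fix $f\in\Oc_s(W\setminus M,Z)$ and a point $p\in\widehat{W}\setminus\widehat{M}$. Using the pluripotential estimates underlying the scalar theorem (the relation $\omega(z,w)<1$ defining $\widehat{W}$, together with the relative extremal functions of $A$ and $B$), I would produce near $p$ a subdomain $\Omega$ of $\C^{n+m}$ contained in $W\setminus M$ on which $f$ is genuinely holomorphic into $Z$, and whose envelope of holomorphy $\widehat{\Omega}$ contains $p$ while staying inside $\widehat{W}\setminus\widehat{M}$; concretely one realizes $\Omega$ through families of analytic discs attached to the cross, as in \cite{nv1,nv2,pn5}. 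Since $Z$ possesses the Hartogs extension property, Theorem \ref{thm_Shiffman} applies to $f|_{\Omega}\in\Oc(\Omega,Z)$ and furnishes a holomorphic extension $\hat{f}_p\in\Oc(\widehat{\Omega},Z)$; in particular $\hat{f}_p$ is defined and holomorphic near $p$. Note that no boundedness hypothesis on $f$ is needed here, which is exactly why the statement can dispense with one.

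Finally I would glue and conclude. Any two of the local extensions agree on the overlap of their domains with the connected seed region, hence everywhere on the overlap by the identity principle for holomorphic maps into the reduced complex space $Z$; by connectedness of $\widehat{W}\setminus\widehat{M}$ they patch to a single $\hat{f}\in\Oc(\widehat{W}\setminus\widehat{M},Z)$ satisfying $\hat{f}=f$ on $(W\cap\widetilde{W})\setminus M$, and uniqueness follows from the same identity principle. The main obstacle is precisely the construction of the previous paragraph: one must exhibit, at \emph{every} point of $\widehat{W}\setminus\widehat{M}$, a Stein configuration inside $W\setminus M$ whose envelope of holomorphy reaches that point yet avoids the pluripolar set $\widehat{M}$, and the delicate interaction between the geometry of $\widehat{W}$ (encoded by the plurisubharmonic measures) and the fiberwise-pluripolar set $M$ is what makes this step, rather than the final gluing, the crux of the argument.
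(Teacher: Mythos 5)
Your overall strategy --- settle the scalar case via \cite{jp3} (with pseudoconvexity removed thanks to \cite{nv1}) and then promote to $Z$-valued maps through Shiffman's characterization --- is exactly the paper's, but the step you yourself identify as the crux contains a genuine gap. You propose to find, near each $p\in\widehat{W}\setminus\widehat{M}$, a subdomain $\Omega\subset\C^{n+m}$ \emph{contained in $W\setminus M$} on which $f$ is jointly holomorphic, and then to apply Theorem \ref{thm_Shiffman} to $f|_{\Omega}$. No such $\Omega$ exists in general: $A$ and $B$ are merely non-pluripolar subsets of $D$ and $G$ (they may be Cantor-type sets with empty interior), so the cross $W=\big((D\cup A)\times B\big)\cup\big(A\times(G\cup B)\big)$ has empty interior in $\C^{n+m}$ and contains no open set whatsoever. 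Moreover, $f$ is only \emph{separately} holomorphic, so even if some open piece were available there would be nothing to which Shiffman's theorem could be applied until joint holomorphy is established. The entire difficulty of the $Z$-valued statement is to manufacture an open set on which $f$ becomes an honest holomorphic map into $Z$, and your proposal assumes this away.

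The paper closes this gap with Theorem \ref{Nguyen_thm}, i.e.\ the $Z$-valued cross theorem \emph{without} singularities from \cite{nv2}: since $M$ is relatively closed in $W$, each point of $\widetilde{W}\setminus M$ admits a small sub-cross disjoint from $M$, and applying Theorem \ref{Nguyen_thm} to these sub-crosses produces a holomorphic extension of $f$ to an open neighborhood of $\widetilde{W}\setminus M$ in $D\times G$ --- an open set that is emphatically \emph{not} contained in $W\setminus M$. Only then do Theorem \ref{thm_Shiffman} and the scalar case (together with the envelope and singular-set bookkeeping, cf.\ Theorem \ref{Chirka_thm} and \cite{az}) yield the extension to $\widehat{W}\setminus\widehat{M}$. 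Your remaining points --- connectedness of $\widehat{W}\setminus\widehat{M}$, gluing by the identity principle, and fixing $\widehat{M}$ from the scalar theory independently of $f$ --- are reasonable, but without the local-extension step the argument does not start.
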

\begin{proof} The special case   when $D$ and $G$ are pseudoconvex and  $Z=\C$  has been proved in \cite{jp3}.
However, using   a recent result in \cite{nv1}, the assumption that $D$ and $G$ are pseudoconvex  can be removed.
Now  we  treat the general case  where $Z$ is a complex analytic
space possessing the Hartogs extension property.
Applying Theorem \ref{Nguyen_thm} below  and using the hypothesis that $M$ is a relatively closed subset of $W,$
we may obtain a local extension of $f$   defined on some open neighborhood of $\widetilde{W}\setminus M.$
Finally, by applying Theorem \ref{thm_Shiffman}, the
desired conclusion of the theorem follows from   its special case   $Z=\C$ (see also \cite{az}).
\end{proof}

We also  need  the following version  of Theorem  \ref{thm_Jarnicki_Pflug}  when  $M$ is  not necessarily  closed in
$W$

\begin{theorem} \label{thm_Jarnicki_Pflug_new_version}
 Let $X=\C^n$  and  $Y=\C^m,$
  let $D_0\subset D\subset X,$ $G_0\subset G\subset Y$ be four bounded domains,  and let
  $A\subset D_0$  and $B\subset G_0$ be non-pluripolar  subsets.
  Let  $M$ be a  subset of $W:=\X(A,B;D,G)$  such that $M$ is  relatively closed  pluripolar
  in fibers over $ A$ and  $ B.$
   Then there exist:
\begin{itemize}
   \item[$\bullet$]
   pluripolar sets  $P\subset A,$  $Q\subset B$    such that   the set
$A_0:=A\setminus P,$ $B_0:=B\setminus  Q$  are locally pluriregular,
   \item[$\bullet$]
a relatively closed pluripolar  set $\widehat{M}\subset \widehat{W}$
\end{itemize}
   such that:
   \begin{itemize}
   \item[$\bullet$]
   $\widehat{M}\cap \X(A_0,B_0;D,G)\subset M;$
   \item[$\bullet$]
   for every  mapping  $f\in \Oc_s(W\setminus M,Z)\cap \Oc(D_0     \times G_0,Z
   ),$
 there exists a unique mapping
$\hat{f}\in\Oc(\widehat{W}       \setminus \widehat{M},Z )$ such that
 $\hat{f}=f$ on $ D_0\times  G_0.$
 \end{itemize}
\end{theorem}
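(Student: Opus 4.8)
The plan is to prove the scalar case $Z=\C$ first and then to lift it to an arbitrary complex analytic space $Z$ with the Hartogs extension property, exactly along the lines of the proof of Theorem~\ref{thm_Jarnicki_Pflug}. For $Z=\C$ the statement is, up to notation, the cross theorem with non-closed singularities of Jarnicki and Pflug in \cite{jp5,jp6}: the whole point of allowing $M$ to be merely fiberwise relatively closed and pluripolar, while demanding the extra joint holomorphy $f\in\Oc(D_0\times G_0,\C)$, is precisely the situation treated there. I would therefore extract from \cite{jp5,jp6} the pluripolar sets $P\subset A$, $Q\subset B$ (with $A_0:=A\setminus P$, $B_0:=B\setminus Q$ locally pluriregular), the relatively closed pluripolar set $\widehat M\subset\widehat W$ satisfying $\widehat M\cap\X(A_0,B_0;D,G)\subset M$, and the scalar extension $\hat f$. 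Note that these objects depend only on $M$ and the cross data, not on $f$, as required.

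Two preliminary reductions make the transfer to general $Z$ transparent. First, the existence of the locally pluriregular cores $A_0,B_0$ is pure pluripotential theory: since $A\subset D_0$ and $B\subset G_0$ are non-pluripolar subsets of bounded domains, the Bedford--Taylor results recalled in Subsection~\ref{Subsection_approach_regions} furnish pluripolar $P,Q$ with $A_0,B_0$ locally pluriregular. Because deleting a pluripolar set does not alter the relative extremal function, one has $\omega(\cdot,A_0,D)=\omega(\cdot,A,D)$ and $\omega(\cdot,B_0,G)=\omega(\cdot,B,G)$, whence $\widehat{\X(A_0,B_0;D,G)}=\widehat W$; this is what lets me state the conclusion on the full region $\widehat W$. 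Second, I would record the simplification produced by the box: since $A\subset D_0$, $B\subset G_0$ and $f\in\Oc(D_0\times G_0,Z)$, each slice $f(a,\cdot)$, $a\in A$, is holomorphic not only on $G\setminus M_a$ but also on $G_0$, hence on $G\setminus\big(M_a\cap(G\setminus G_0)\big)$, and symmetrically for $f(\cdot,b)$. Thus the effective fiber singularities are pushed into $G\setminus G_0$ and $D\setminus D_0$, away from $B$ and $A$.

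For general $Z$ I would then mimic the proof of Theorem~\ref{thm_Jarnicki_Pflug}. Using the local extension theorem (Theorem~\ref{Nguyen_thm}, proved below) together with the separate holomorphy of $f$ off $M$ and its joint holomorphy on $D_0\times G_0$, one produces a holomorphic $Z$-valued extension on a neighborhood of each point of the regular part of the cross over $A_0,B_0$; the envelope-of-holomorphy characterization of the Hartogs property (Theorem~\ref{thm_Shiffman}) glues these local pieces, and the already-known scalar structure identifies the global singular set with the prescribed pluripolar $\widehat M$. The box serves as the uniqueness anchor: $\hat f$ and $f$ are both holomorphic on the connected open set $D_0\times G_0$, and the extension obtained over $A_0,B_0$ already forces $\hat f=f$ on $\bigcup_{a\in A_0}\{a\}\times(G_0\setminus M_a)$, a non-pluripolar subset of $D_0\times G_0$ (here $\widehat M\cap\X(A_0,B_0;D,G)\subset M$ guarantees $(\widehat M)_a\subset M_a$ for $a\in A_0$). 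By the identity principle for holomorphic maps into $Z$ this yields $\hat f=f$ on all of $D_0\times G_0$, and the same argument gives uniqueness.

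The main obstacle is the scalar non-closed cross theorem itself, that is, the construction of the pluripolar base sets $P,Q$ and of the pluripolar $\widehat M$ with $\widehat M\cap\X(A_0,B_0;D,G)\subset M$. Naively replacing $M$ by its relative closure in $W$ fails, because the closure can manufacture non-pluripolar fibers over a non-pluripolar set of base points; the full strength of the hypotheses --- fiberwise relative closedness and pluripolarity of $M$ together with the joint holomorphy on $D_0\times G_0$ --- is needed to show, by the arguments of \cite{jp5,jp6}, that the set of base points at which the extension genuinely meets a new singularity is pluripolar. Once this scalar input is secured, the passage to general $Z$ above is routine and reduces, as in Theorem~\ref{thm_Jarnicki_Pflug}, to Theorem~\ref{thm_Shiffman}.
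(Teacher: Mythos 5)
Your proposal follows essentially the same route as the paper: the scalar case is taken from the Jarnicki--Pflug non-closed cross theorem (the paper cites Theorem 3.4 of \cite{jp6} specifically), and the passage to a general target $Z$ with the Hartogs extension property is made via Shiffman's characterization (Theorem \ref{thm_Shiffman}), exactly as in the proof of Theorem \ref{thm_Jarnicki_Pflug}. The only point the paper adds that you omit is that Theorem 3.4 of \cite{jp6} is stated for pseudoconvex domains, and this hypothesis is removed by invoking \cite{nv1}; otherwise your argument matches the paper's.
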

\begin{proof} The special case   when $D_0,\ D$ and $G_0,\ G$ are pseudoconvex and  $Z=\C$  has been proved in Theorem
3.4  of \cite{jp6}.
However, using   a recent result in \cite{nv1}, the assumption of pseudoconvexity  can be removed. Finally, by
applying Theorem \ref{thm_Shiffman}, the desired conclusion of the theorem  follows from   its special case   $Z=\C.$
\end{proof}

The next result was  proved by the first author  in \cite{nv2}.
\begin{theorem}\label{Nguyen_thm}
We keep the hypotheses and notation of the Main Theorem. Suppose in addition that $M=\varnothing.$ Then the conclusion
of the Main Theorem  holds for $\widehat{M}=\varnothing.$
\end{theorem}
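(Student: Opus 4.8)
Since $M=\varnothing$, the statement to prove is the classical cross theorem in the present degree of generality: every $f\colon W\to Z$ satisfying (i)--(iii) extends to $\hat f\in\Oc(\widehat{\widetilde W},Z)$ with $(\Alim\hat f)(\zeta,\eta)=f(\zeta,\eta)$ at every $(\zeta,\eta)\in\widetilde W$. Uniqueness is the soft part. As is standard for cross hulls, $\widehat{\widetilde W}=\{(z,w)\in D\times G:\widetilde\omega(z,w)<1\}$ is connected, and on a nonempty open subset of it $\hat f$ is pinned down (by the prescribed $\mathcal A$-limit together with the separate holomorphicity of $f$); hence any two extensions coincide by the identity theorem for holomorphic maps into the analytic space $Z$. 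The entire difficulty is therefore the \emph{construction} of $\hat f$.

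The plan is, first, to reduce the target to $Z=\C$. Near any point at which we wish to extend, hypothesis (ii) together with separate continuity places the relevant values of $f$ in a set biholomorphic to a bounded analytic subset of a polydisc $\C^k$; composing with that embedding turns $f$ into $k$ bounded scalar separately holomorphic maps. Extending each scalarly and recombining, the resulting $\C^k$-valued extension still satisfies the holomorphic defining equations of the analytic subset (by the identity theorem), hence lands back in $Z$; this is exactly the content of the Hartogs extension property (Definition \ref{defi_HEP}), equivalently Theorem \ref{thm_Shiffman}. Thus it suffices to solve the scalar, locally bounded cross problem, which I attack geometrically by holomorphic discs, avoiding Bergman bases.

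Fix $(z_0,w_0)\in\widehat{\widetilde W}$, so $t:=\widetilde\omega(z_0,A,D)+\widetilde\omega(w_0,B,G)<1$, and pick $\epsilon>0$ with $t+2\epsilon<1$. Using that $\widetilde\omega(\cdot,A,D)=\omega(\cdot,\widetilde A,D)$ by definition and that $\widetilde A$ is locally pluriregular, I would invoke the disc characterization of the relative extremal function to produce an analytic disc $h_1\in\Oc(\overline E,\overline D)$ with $h_1(0)=z_0$ whose boundary meets $\widetilde A$ along an arc $J_1\subset\partial E$ of normalized length $\mes J_1>1-\widetilde\omega(z_0,A,D)-\epsilon$, and likewise $h_2\in\Oc(\overline E,\overline G)$ with $h_2(0)=w_0$, $h_2(J_2)\subset\widetilde B$, $\mes J_2>1-\widetilde\omega(w_0,B,G)-\epsilon$. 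Pulling $f$ back under $(\lambda,\mu)\mapsto(h_1(\lambda),h_2(\mu))$ converts the separate holomorphicity of $f$ into separate holomorphicity of $F(\lambda,\mu):=f(h_1(\lambda),h_2(\mu))$ on the boundary cross $\X(J_1,J_2;E,E)$, while (i)--(iii) furnish exactly the separate continuity and boundedness along $J_1,J_2$. Since $\omega(0,J_1,E)+\omega(0,J_2,E)=(1-\mes J_1)+(1-\mes J_2)<t+2\epsilon<1$, the origin lies in $\widehat{\X}(J_1,J_2;E,E)$; applying the scalar boundary-arc cross theorem on the bidisc (the case $M=\varnothing$ of \cite{pn5}, provable by a harmonic-measure and Hartogs-figure estimate) extends $F$ holomorphically across $(0,0)$, hence extends $f$ to a neighborhood of $(z_0,w_0)$. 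The local extensions obtained as $(z_0,w_0)$ ranges over $\widehat{\widetilde W}$ agree on overlaps by uniqueness and patch to a single $\hat f\in\Oc(\widehat{\widetilde W},Z)$; the prescribed $\mathcal A$-limit at $(\zeta,\eta)\in\widetilde W$ is then read off by letting $(z_0,w_0)\to(\zeta,\eta)$ along the approach regions $\mathcal A_\alpha(\zeta)\times\mathcal A_\beta(\eta)$ and invoking (ii)--(iii).

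The step I expect to be the main obstacle is the disc construction with controlled boundary arcs when $A$ and $B$ touch $\partial D$ and $\partial G$. There the relative extremal function must be interpreted through the system of approach regions $\mathcal A$, the discs $h_1,h_2$ must be steered so that $J_1,J_2$ land in $\widetilde A\cap\partial D$ and $\widetilde B\cap\partial G$ rather than merely in $D$ and $G$, and the bookkeeping yielding $\mes J_1+\mes J_2>1$ must be reconciled with the normalizations $A=A^\ast$, $B=B^\ast$ and the strict bounds $\widetilde\omega(\cdot,A,D)<1$, $\widetilde\omega(\cdot,B,G)<1$. Producing such discs in an arbitrary complex manifold, and ensuring that the pull-back $F$ inherits precisely the boundary regularity needed to feed the scalar boundary cross, is where the bulk of the technical work lies.
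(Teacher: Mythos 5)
You should first know how the paper itself handles this statement: it gives no proof at all --- Theorem \ref{Nguyen_thm} is quoted from \cite{nv2}, where it is the main result, proved precisely by the disc method (Theorem \ref{Poletsky} here is Theorem 3.8 of \cite{nv2}, and the one-dimensional boundary input is \cite{pn2}). So your overall strategy is the right family of ideas. As a proof, however, your sketch has two steps that fail as written. The first is the reduction to $Z=\C$: hypothesis (ii) gives local boundedness of $f$ only along $\X(A\cap\partial D,B\cap\partial G;D,G)$, so at a point of $W^{\text{o}}$ with $a\in A\cap D$ the values of $f(a,\cdot)$ need not sit in any bounded embeddable chart, and the chart-by-chart scalarization is unavailable exactly where most of the extension happens. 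Worse, if your pointwise argument worked it would never genuinely use the Hartogs extension property and would prove the theorem for an arbitrary target $Z$, which is false. The mechanism actually used (see the proof of Theorem \ref{thm_Jarnicki_Pflug} in this paper) is different: one first constructs a holomorphic extension of $f$ on an \emph{open neighborhood} of (the regular part of) the cross, and only then invokes Shiffman's Theorem \ref{thm_Shiffman}, identifying the hull with an envelope of holomorphy; that is the one place the Hartogs property enters.

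The second gap is that the pull-back $F(\lambda,\mu):=f(h_1(\lambda),h_2(\mu))$ is not a datum for the one-variable boundary cross theorem. For fixed $\mu\in E$ one has $h_2(\mu)\in G$ but in general $h_2(\mu)\notin B$, and $f(\cdot,w)$ is simply undefined on $D$ when $w\in G\setminus B$; so $F(\cdot,\mu)$ does not exist, and the cross $\X(J_1,J_2;E,E)$ has no horizontal data. This is exactly why the actual proof is a chain of mixed theorems in which one factor's datum is an \emph{open} set (compare $B=E^n$ in Theorems \ref{new_Imomkulov_thm} and \ref{local_thm_1} of the present paper), combined with level sets $A_\delta=\{\omega(\cdot,A_0,D)<\delta\}$ and the exhaustion machinery of Proposition \ref{Nguyen_prop} to pass from compact locally pluriregular pieces $A_0$ to $\widetilde A$. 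Your disc construction is also too optimistic: discs continuous on $\overline E$ with boundary \emph{arcs} $J_i$ landing in $\widetilde A$ do not exist in general; Theorem \ref{Poletsky} yields only a bounded $\phi$ with a \emph{measurable} $\Gamma_0$, boundary values in the sense of angular limits, $\phi(\Gamma_0)\subset\overline A$ (not $\widetilde A$), and its transfer statement 2) applies only to functions bounded and continuous up to $\overline A$ --- whence the prior localization to compacta $\overline A_0\subset A$. Finally, gluing ``by uniqueness'' is not automatic: two local extensions centered deep inside $\widehat{\widetilde W}$ can overlap on open sets meeting no cross points, and even the well-definedness of the disc-based value (independence of the choice of $\phi$) requires a separate argument (Lemma 4.5 of \cite{nv2}). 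You correctly flagged the disc construction as the main obstacle, but the missing content is larger than steering boundary arcs: it comprises essentially the whole inductive architecture of \cite{nv2}.
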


The following result will play an important role in the sequel.

\begin{theorem}[\cite{Chi}]\label{Chirka_thm}
Let $D\subset\C^n$ be a domain and let $\widehat{ D}$ be the envelope of holomorphy of $D$. Assume that $S$ is  a
relatively closed pluripolar subset of $D$. Then there exists a relatively closed pluripolar subset $\widehat{ S}$ of
$\widehat{ D}$ such that $\widehat{ S}\cap D\subset S$ and $\widehat{ D}\setminus\widehat{S}$ is the envelope of
holomorphy of $D\setminus S$.
\end{theorem}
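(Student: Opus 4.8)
The plan is to realise the envelope of holomorphy $\widehat{D\setminus S}$ as an open subset of $\widehat D$ and to take $\widehat S$ to be its complement. The starting point is that an envelope of holomorphy spread over $\C^n$ is a Stein Riemann domain, so $\widehat D$ is Stein and, by Theorem~\ref{thm_Shiffman}, possesses the Hartogs extension property; moreover every $f\in\Oc(D,\C)$ and (a standard fact) every $u\in\mathcal{PSH}(D)$ extends to $\widehat D$. Since $D\setminus S\subset D\subset\widehat D$ and $\widehat D$ is Stein, the universal property of the envelope of holomorphy lets me extend the inclusion $D\setminus S\hookrightarrow\widehat D$ to a holomorphic map $\pi\colon\widehat{D\setminus S}\to\widehat D$ that commutes with the projections to $\C^n$. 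Being fibre-preserving between two Riemann domains over $\C^n$, $\pi$ is then locally the composition of two local biholomorphisms, hence a local biholomorphism, and it restricts to the identity on $D\setminus S$.

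First I would verify that $\pi$ is injective, so that it identifies $\widehat{D\setminus S}$ with an open subset of $\widehat D$; this is where fibre-preservation is used, together with the fact that $\pi^{*}$ pulls back the functions of $\Oc(\widehat D)\supset\Oc(D)$, which already separate points and furnish local coordinates on $\widehat D$. Setting $\widehat S:=\widehat D\setminus\pi\big(\widehat{D\setminus S}\big)$, three of the four required properties then come for free: $\widehat S$ is relatively closed in $\widehat D$ because $\pi$ is an open map; $\widehat D\setminus\widehat S$ is by construction (bi\-holomorphic to) the envelope of holomorphy of $D\setminus S$; and $\widehat S\cap D\subset S$, since every point of $D\setminus S$ already lies in the image of $\pi$ and so cannot lie in $\widehat S$.

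The heart of the matter, and the step I expect to be the main obstacle, is to show that $\widehat S$ is pluripolar. Over $D$ this is immediate, as $\widehat S\cap D\subset S$ and $S$ is pluripolar; the genuine difficulty is the part of $\widehat S$ lying over $\widehat D\setminus D$, where the hypothesis on $S$ gives no direct information and where one must understand how the singularities of holomorphic functions on $D\setminus S$ propagate under analytic continuation. Here I would invoke Josefson's theorem on the Stein manifold $D$ to produce $u\in\mathcal{PSH}(D)$, $u\not\equiv-\infty$, with $S\subset\{u=-\infty\}$, extend it to $\widehat u\in\mathcal{PSH}(\widehat D)$, and combine this with a removable-singularity theorem across pluripolar sets: locally bounded holomorphic functions extend across such sets, while a Hartogs-figure continuation argument carries the remaining, unbounded singularities as a pluripolar set through each filling used to build $\widehat D$ out of $D$. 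The aim is to confine $\widehat S\setminus D$ to the pluripolar hull of $S$ relative to $\widehat D$, which is itself pluripolar; since $\widehat S$ is then locally pluripolar in the Stein manifold $\widehat D$, the Josefson--Bedford result recalled in Section~\ref{Section_preliminary} upgrades this to global pluripolarity and completes the verification.

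The delicate points to watch, and the reason the pluripolarity step is hard, are twofold. First, the complete pluripolar majorant $\{u=-\infty\}$ of $S$ may be strictly larger than $S$, and its extra points lie in $D\setminus S$ where the functions are regular and must \emph{not} be flagged as singular; so the defining function $\widehat u$ cannot be used naively and must be localised. Second, the continuation has to be carried out coherently through the (possibly transfinite) succession of Hartogs fillings that assemble $\widehat D$ from $D$, checking at each stage that the singular locus remains pluripolar and relatively closed and glues consistently. Reconciling these two issues is precisely the pluripotential-theoretic core of the argument.
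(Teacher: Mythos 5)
The paper gives no proof of this statement: Theorem~\ref{Chirka_thm} is imported verbatim from Chirka's article \cite{Chi}, so there is no internal argument to compare yours against and your attempt has to be judged on its own merits.

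Judged that way, it is a programme rather than a proof, and the two steps you yourself isolate as the crux are exactly the ones left open. First, the injectivity of $\pi\colon\widehat{D\setminus S}\to\widehat D$. The argument you indicate --- that $\Oc(\widehat D)\supset\Oc(D)$ separates points and is pulled back by $\pi$ --- cannot work: every function obtained by pulling back through $\pi$ is, by construction, constant on the fibres of $\pi$, so it is incapable of separating two points $p\neq q$ with $\pi(p)=\pi(q)$. What injectivity actually requires is that the continuation of an \emph{arbitrary} $f\in\Oc(D\setminus S)$ be single-valued over $\widehat D$ away from a pluripolar set, and that is essentially the assertion of the theorem itself; assuming it in order to define $\widehat S:=\widehat D\setminus\pi\big(\widehat{D\setminus S}\big)$ makes the construction circular. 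Second, the pluripolarity of $\widehat S$ over $\widehat D\setminus D$. The claim that every $u\in\mathcal{PSH}(D)$ extends to $\widehat D$ is not a standard fact that can simply be invoked (plurisubharmonic functions do not continue to the envelope of holomorphy the way holomorphic functions do), and even granting an extension $\widehat u$ of a $u$ with $S\subset\{u=-\infty\}$, you give no argument that the singular locus of the continued holomorphic functions is contained in $\{\widehat u=-\infty\}$, nor in the pluripolar hull of $S$ --- that containment \emph{is} the propagation statement to be proved, not a tool available for proving it. The missing engine is local: in a single Hartogs filling one controls the set where continuation fails via the plurisubharmonicity of $\limsup_{k}\frac{1}{k}\log|c_k|$ for the coefficients $c_k$ of a Hartogs--Laurent expansion, and this is what forces the singular set to be relatively closed and pluripolar; only after this local analysis, followed by the exhaustion and gluing you allude to, do injectivity and pluripolarity both come out. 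As it stands, the frame you set up is reasonable, but everything that makes Chirka's theorem a theorem remains to be supplied.
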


In this article, let $\mes$ denote the Lebesgue measure on the unit circle $\partial E.$
 Recall here the system of  angular (or  Stolz)    approach regions for $E$ (see, for example,
 \cite{nv2}).
 Put
\begin{equation*}
\mathcal{A}_{\alpha}(\zeta):=
 \left\lbrace          t\in E:\ \left\vert
 \arg\left(\frac{\zeta-t}{\zeta}\right)
 \right\vert<\alpha\right\rbrace   ,\qquad  \zeta\in\partial E,\  0<\alpha<\frac{\pi}{2},
\end{equation*}
where  $\arg:\ \C\longrightarrow (-\pi,\pi]$ is  as usual the argument function. $
\mathcal{A}=\left(\mathcal{A}_{\alpha}(\zeta)\right)_{\zeta\in\partial E,\
 0<\alpha<\frac{\pi}{2}}$ is   referred to as   {\it   the system of  angular (or  Stolz)
    approach regions for $E.$}
In this  context  $\Alim$ is also called {\it angular limit}.

 For $z\in\C^n$ and $r>0,$ let
$\Delta^n_z(r)$ denote the open polydisc centered  at $z$ with radius $r.$  When $n=1$,  we  will write  for short
$\Delta_z(r)$ instead of $\Delta^{1}_z(r).$

Fix $A\subset\partial E$. For $a\in\partial E$ and
$0<\rho,\epsilon<1,$ let
\begin{equation*}
\Delta_{a}(\rho,\epsilon):=\Delta_{a}(\rho,\epsilon;A):=\left\lbrace  z\in\Delta_{a}(\rho) \cap E:\ \omega\big(z,
A\cap \Delta_{a}(\rho),  \Delta_{a}(\rho) \cap E\big)<\epsilon    \right\rbrace.
\end{equation*}
 It is  worthy  to remark that  if  $a$ is  a density
point of $A$, then $\Delta_{a}(\rho,\epsilon)\not=\varnothing.$

 The following result will be very useful.
\begin{proposition}\label{Imomkulov_thm}
Let  $D=G=E$ and  let $A\subset\partial D $   be a measurable  subset such that $\mes(A)>0,$ and let  $B\subset G$ be
an open set. Moreover, we assume that any point of $A$ is a density point of $A$. Consider the cross
$W:=\X(A,B;D,G).$ Let $M$ be a relatively closed subset of $W$ such that  $M_a$ is   polar (resp. discrete)  in $G,$
$M_a\cap B=\varnothing$ for all $a\in A$ and
 $M^b=\varnothing$    for all $b\in B.$   Let $B_0$  be  an open subset of $B$ such that $B_0$  intersects all connected components of $B.$ Let $S$ be a
relatively closed pluripolar  subset (resp.  analytic subset)   of $D\times B$ such that for  every $(a,b)\in A\times
B_0,$  there  exist $0<\rho,\epsilon<1$ and  an open neighborhood $V\subset G$  of  $b$  such that
$\big(\Delta_{a}(\rho,\epsilon)\times V\big)\cap S=\varnothing.$ Then there exists a relatively closed pluripolar
subset (resp. an analytic subset)  $ T$ of $\widehat{W}$ with  $T\cap( D\times B)=\varnothing$ and with the following
property:  Let $f:\ W\setminus (M\cup S) \longrightarrow \C$
 be a locally bounded
function  such that
\begin{itemize}
\item[$\bullet$]  for all $a\in A,$
 $f(a,\cdot)|_{G\setminus M_a}$   is
holomorphic;
 \item[$\bullet$] for all $b\in B$
$f(\cdot,b)|_{D\setminus S^b}$ is holomorphic, and for all $b\in B_0$ the function $f(\cdot,b)$ admits the
angular limit $f(a,b)$ at every point $a\in A.$
\end{itemize}
  Then
there is a unique function $\hat{f}\in\Oc(\widehat{W}\setminus  T,\C)$ which
 extends $f|_{(D\times  B)\setminus S}.$

 Moreover,   if $M=S=\varnothing,$ then $T=\varnothing.$
\end{proposition}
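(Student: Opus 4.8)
The method is geometric, carried out with holomorphic discs, using the interior cross theorems and Chirka's theorem only to keep track of the singularities. For a point of $\widehat{W}$ I would produce analytic discs transverse to the cross whose boundaries lie in the separately holomorphic locus $W^{\mathrm o}\setminus(M\cup S)$, thereby reducing the extension to one-variable Hartogs extension along each disc; the points at which every admissible disc is forced to meet $M$ or $S$ will constitute the singular set $T$. Uniqueness of $\hat{f}$ follows from the identity theorem, since any two extensions agree on the nonempty open set $D\times B\subset\widehat{W}\setminus T$ and a relatively closed pluripolar set does not disconnect the domain $\widehat{W}$. The case $M=S=\varnothing$ is exactly Theorem \ref{Nguyen_thm} for the present cross, which yields the extension on all of $\widehat{W}$ with $T=\varnothing$; this disposes of the final assertion and furnishes the singularity-free skeleton onto which the general case is grafted.

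Fix $p=(z_0,w_0)\in\widehat{W}$, so that $\omega(z_0,A,D)+\omega(w_0,B,G)<1$. I would construct a family of analytic discs through $p$ that reach both the interior arm $D\times B$, where $f$ is holomorphic in $z$, and the boundary arm $A\times G$, where $f$ is holomorphic in $w$ off the polar fibers $M_a$. The discs are attached to $A$ non-tangentially, so that the angular-limit hypothesis supplies the boundary values $f(a,w_0)$: here the density-point property of $A$ is precisely what makes the Stolz regions $\Delta_a(\rho,\epsilon)$ nonempty and forces non-tangential approach, while the escape hypothesis keeps the discs off $S$ near $A\times B_0$. Along each disc avoiding $M\cup S$ the pulled-back function extends by one-variable Hartogs extension, defining a candidate value $\hat{f}(p)$; that such discs exist and that their images cover a neighborhood of $p$ inside $\widehat{W}$ follows from the defining inequality together with the standard level-set geometry of the plurisubharmonic measure.

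Where the discs avoid $M\cup S$ the candidate value is independent of the chosen disc by the identity theorem, so $\hat{f}$ is single-valued and holomorphic on the open set of good points and coincides with $f$ on $(D\times B)\setminus S$. The extension across $S$ over interior regions is supplied by the interior cross theorem with pluripolar singularities (Theorem \ref{thm_Jarnicki_Pflug_new_version} in the pluripolar case, and its analytic analogue when $S$ is analytic and the $M_a$ are discrete). To package the exceptional locus cleanly I would then apply Chirka's theorem (Theorem \ref{Chirka_thm}) to the relatively closed pluripolar set $S$: it produces, in the envelope of holomorphy of $(D\times B)\setminus S$, a relatively closed pluripolar set $\widehat{S}$ with $\widehat{S}\cap(D\times B)\subset S$, and $T$ is assembled from the part of $\widehat{S}$ lying in $\widehat{W}$ together with the locus arising from the boundary fibers $M_a$. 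Since $M_a\cap B=\varnothing$ and $S$ escapes near $A\times B_0$, both contributions lie over points $w\notin B$, whence $T\cap(D\times B)=\varnothing$; in the discrete/analytic setting every set produced is analytic, so $T$ is analytic. Gluing the local extensions over a countable exhaustion of $\widehat{W}$ (obtained by letting $\rho,\epsilon\to 0$ and varying $a_0\in A$, $b_0\in B_0$) then yields $\hat{f}\in\Oc(\widehat{W}\setminus T,\C)$.

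The main obstacle is the construction of the discs at the boundary arm. Because holomorphy in $w$ is available only over $A\subset\partial D$ and not over any interior approximation, the discs must be attached to $A$ non-tangentially and their boundary values read off solely through the angular-limit hypothesis, all the while staying off the pluripolar (resp. analytic) singularities $M$ and $S$ and keeping the exceptional locus $T$ relatively closed pluripolar (resp. analytic) and disjoint from $D\times B$. The density-point hypothesis on $A$ and the escape hypothesis on $S$ near $A\times B_0$ are exactly the inputs that make this simultaneous control possible. In keeping with the paper's stated convention, I would develop this boundary disc-extension mechanism in detail and only sketch the routine gluing.
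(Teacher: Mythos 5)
The paper does not prove this proposition from scratch: it reduces it to Proposition 4.1 of \cite{pn5} (the one-dimensional boundary cross theorem with singularities), using the two extra hypotheses (holomorphy of $f$ on $\Delta_a(\rho,\epsilon)\times V$ and the fact that $B_0$ meets every component of $B$) only to adapt that argument. Your proposal instead tries to build the extension by attaching analytic discs through each point of $\widehat W$, and this is where it breaks down. The disc technique in this paper (Theorem \ref{Poletsky} and its use in Theorem \ref{local_thm_2}) serves one purpose: to reduce a higher-dimensional first factor $D\subset\C^m$ to the unit disc, producing a cross $\X(\Gamma,B;E,G)$ to which the present proposition is then applied. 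It cannot be used to prove the proposition itself, because here $D=E$ already and the data in the first variable lives only on $A\subset\partial E$ via angular limits. Concretely, the step ``along each disc avoiding $M\cup S$ the pulled-back function extends by one-variable Hartogs extension'' has no content: $f$ is defined only on the cross $W\setminus(M\cup S)$, which has empty interior near the arm $A\times G$, so the restriction of $f$ to an analytic disc in $D\times G$ is not a holomorphic (or even everywhere-defined) function of the disc parameter, and there is nothing to extend. The actual mechanism needed in this base case is quantitative: two-constants/harmonic-measure estimates on the level sets $\Delta_a(\rho,\epsilon)$ to propagate the angular boundary values into $E\times G$, combined with an Imomkulov-type removability argument for the polar fibers $M_a$ and the interior cross theorems to cross $S$. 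None of this appears in your sketch.

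Two further gaps. First, your construction of $T$ via Chirka's theorem does not yield $T\cap(D\times B)=\varnothing$: Theorem \ref{Chirka_thm} gives only $\widehat S\cap(D\times B)\subset S$, not disjointness from $D\times B$, and your assertion that ``both contributions lie over points $w\notin B$'' is unsupported. The statement actually asserts that $S$ is removable for $\hat f$ over all of $D\times B$, which is a substantive claim requiring the escape hypothesis on $S$ near $A\times B_0$ together with the connectedness role of $B_0$; it is not a bookkeeping consequence of Chirka's theorem. Second, defining $T$ as ``the points at which every admissible disc is forced to meet $M$ or $S$'' leaves entirely open why this set is relatively closed and pluripolar (resp.\ analytic); controlling the exceptional set is the main technical burden of the proposition and cannot be deferred to ``routine gluing.'' The final assertion ($T=\varnothing$ when $M=S=\varnothing$) is correctly identified as the singularity-free boundary cross theorem, but that is the only part of the argument that stands.
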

\begin{proof}
Using the hypotheses that  $f|_{\Delta_{a}(\rho,\epsilon)\times V}$ is holomorphic and that $B_0$ intersects all
connected components of $B,$ we can adapt  the argument given in the proof of Proposition 4.1  in \cite{pn5} so that
the latter proposition is still  true  in our context. The remaining  of the proof follows along the same lines  as
those  given in \cite{pn5} making the obviously necessary changes.
\end{proof}
\begin{remark}\label{remark_Imomkulov_thm}
The previous  proposition  still holds if we replace the domain $D:=E$  by the open set  $D:=\{  z\in E:\
\omega(z,A,E)<\epsilon\}$  for some $0<\epsilon<1.$
\end{remark}

The first main purpose  of this  section is  to establish the  following  higher dimensional version of Theorem
\ref{Imomkulov_thm}.

\begin{theorem} \label{new_Imomkulov_thm}
Let $A$ be  a measurable subset of $\partial E$ with $\mes(A)>0$ and let $r>1.$ Let $M$ be  a relatively closed
subset of $A\times \Delta^n_0(r) $ such that $M\cap(A\times \overline{E}^{n})=\varnothing$ and that
$M_a:=\{w\in\Delta^n_0(r):\ (a,w)\in M\}$ is  pluripolar  for all $a\in A.$ Then there exists a relatively closed
pluripolar  subset $\widehat{M}$ of $  \widehat{\X}(A,E^n;E,\Delta^n_0(r))    $ with $\widehat{M}\cap
E^{n+1}=\varnothing$ and   with the following additional property:

Let  $f:\ \X(A,E^n;E,\Delta^n_0(r))\setminus M \rightarrow \C$ be a locally bounded function  such that
\begin{itemize}
\item[ $\bullet$] for all $a\in A,$
  $f(a,\cdot)|_{\Delta^n_0(r)\setminus M_a}$  is  holomorphic;
\item[ $\bullet$]  for all $ w\in E^n,$ the function $f(\cdot,w)|_E$ is  holomorphic and  admits the angular limit
    $f(a,w)$  at all points $ a\in A.$
\end{itemize}
 Then there is a  unique  function  $\hat{f}\in \Oc\big(\widehat{\X}(A,E^n; E, \Delta^n_0(r))\setminus
 \widehat{M},\C\big)$
 which extends $f|_{E^{n+1}}.$
\end{theorem}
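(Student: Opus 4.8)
The plan is to reduce the $n$-dimensional target $\Delta^n_0(r)$ to the one-dimensional Proposition \ref{Imomkulov_thm} by slicing the polydisc with complex lines through the origin. Write $W:=\X(A,E^n;E,\Delta^n_0(r))$ and $\widehat W:=\widehat{\X}(A,E^n;E,\Delta^n_0(r))=\{(z,w)\in E\times\Delta^n_0(r):\ \omega(z,A,E)+\omega(w,E^n,\Delta^n_0(r))<1\}$. The object to be extended is $f|_{E^{n+1}}$, which is holomorphic: since $M_a\cap\overline E^n=\varnothing$ the core carries no singularity, and the hypotheses of the singularity-free cross theorem (Theorem \ref{Nguyen_thm}) hold on the sub-cross $\X(A,E^n;E,E^n)$, whose hull is exactly $E^{n+1}$. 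The computational backbone is the product formula $\omega(w,E^n,\Delta^n_0(r))=\max_{1\le j\le n}\frac{\log^+|w_j|}{\log r}$ for the relative extremal function of a polydisc.

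\emph{Slicing step.} For a direction $v\in\C^n$ with $\max_{1\le j\le n}|v_j|=1$ put $L_v:=\{\zeta v:\ \zeta\in\C\}$. Then $\zeta v\in E^n\iff|\zeta|<1$ and $\zeta v\in\Delta^n_0(r)\iff|\zeta|<r$, and, crucially, $\omega(\zeta v,E^n,\Delta^n_0(r))=\frac{\log^+|\zeta|}{\log r}=\omega(\zeta,E,\Delta_0(r))$. Hence restricting the $w$-variable to $L_v$ turns $W$ into the two-fold cross $\X(A,E;E,\Delta_0(r))$ in the variables $(z,\zeta)$, and $\widehat W$ restricts to $\widehat{\X}(A,E;E,\Delta_0(r))$. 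For every direction $v$ for which the slice $M_a\cap L_v$ is polar in $\Delta_0(r)$ for all $a\in A$, the function $g_v(z,\zeta):=f(z,\zeta v)$ satisfies the hypotheses of Proposition \ref{Imomkulov_thm} with $D=E$, $G=\Delta_0(r)$, $B=E$, $B_0=E$, $S=\varnothing$: indeed $M_a\cap L_v\cap\overline E=\varnothing$, so the fibre singularity is polar and disjoint from $B$, and $g_v(\cdot,b)$ has the required angular limits. We thus obtain $\hat g_v\in\Oc(\widehat{\X}(A,E;E,\Delta_0(r))\setminus T_v)$ extending $g_v|_{E^2}$, with $T_v$ relatively closed polar and $T_v\cap(E\times E)=\varnothing$.

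\emph{Gluing and construction of $\widehat M$.} Since every $w\in\Delta^n_0(r)\setminus\{0\}$ lies on a unique such line, the family $(\hat g_v)_v$ patches to a candidate $\hat f(z,w):=\hat g_v(z,\zeta)$, $w=\zeta v$, which agrees with $f$ on $E^{n+1}$. For each fixed $w$ off a pluripolar set, $z\mapsto\hat f(z,w)$ is holomorphic (a fixed-$\zeta$ restriction of $\hat g_v$), while for $a\in A$ the map $\hat f(a,\cdot)=f(a,\cdot)$ is holomorphic in the full $w$-variable off $M_a$. Thus $\hat f$ is separately holomorphic with respect to the cross in which $z$ ranges over $E$ and $w$ over $\Delta^n_0(r)$ with distinguished boundary set $A$; joint holomorphicity off a pluripolar set then follows from the cross theorems already established (Theorem \ref{Nguyen_thm} away from $M$, and Theorem \ref{thm_Jarnicki_Pflug} to absorb the fibre singularities on the interior polydiscs, using Remark \ref{remark_Imomkulov_thm} to work over the sublevel sets $\{z\in E:\ \omega(z,A,E)<\epsilon\}$). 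Finally, the union of the slice sets $T_v$ together with the discarded directions is first organized into a relatively closed pluripolar set off which $\hat f$ is holomorphic, and then regularized into the required $\widehat M\subset\widehat W$ with $\widehat M\cap E^{n+1}=\varnothing$ by applying Chirka's theorem (Theorem \ref{Chirka_thm}) on the envelope of holomorphy. Uniqueness is immediate from the identity theorem on $E^{n+1}$.

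\emph{Main obstacle.} The two genuine difficulties are the \emph{uniform} control of the slicing and the passage to joint holomorphicity. Each individual $M_a$ is pluripolar and so has polar slices $M_a\cap L_v$ for $v$ outside a pluripolar set of directions, but one needs this simultaneously for the uncountable family $\{M_a\}_{a\in A}$, and must show that the directions forced into $\widehat M$, together with the slices $T_v$, sweep out only a pluripolar subset of $\widehat W$. I expect this uniform-slicing/projection issue to be the principal obstacle; the natural tools to overcome it are Fubini-type theorems for pluripolar sets combined with Theorem \ref{Chirka_thm}, which upgrades a relatively closed pluripolar singular set across the relevant envelopes of holomorphy while preserving $\widehat M\cap E^{n+1}=\varnothing$.
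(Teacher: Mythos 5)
Your line-slicing strategy is genuinely different from the paper's argument, but it has two gaps, one of which is fatal as written. The decisive one is the passage from slice-wise to joint holomorphicity. After gluing the $\hat g_v$ you know, for fixed $z$, that $w\mapsto\hat f(z,w)$ is holomorphic on $E^n$ and along each complex line $L_v$ through the origin (off $T_v$); this is \emph{not} holomorphy on the $w$-fiber, and it is not the separate holomorphicity required by Theorem \ref{thm_Jarnicki_Pflug} or Theorem \ref{Nguyen_thm}, both of which need full holomorphy of $f(z,\cdot)$ on an open subset of $\Delta_0^n(r)$ for $z$ in an interior non-pluripolar set. The only fibers on which full $w$-holomorphy is available are those over $a\in A\subset\partial E$ --- i.e.\ exactly the hypotheses of the theorem you are proving --- so invoking ``the cross theorems already established'' for the cross $\X(A,E^n;E,\Delta_0^n(r))$ is circular. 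A Forelli-type rescue (homogeneous expansion at $0$ plus Hartogs' lemma, which is the natural way to upgrade line-wise holomorphy on a balanced domain) is blocked by the slice singular sets $T_v$: a function holomorphic on a disc minus a polar set has no Taylor series converging past the first singular point, so the homogeneous expansion cannot be propagated to the radius where $M_a$ lives. The second gap is the one you flag yourself: for a fixed $v$ you must apply Proposition \ref{Imomkulov_thm} with a set $A_v:=\{a\in A:\ M_a\cap L_v\ \text{polar}\}$ of positive measure whose extremal function agrees with that of $A$, uniformly enough in $v$ that the discarded lines sweep out a \emph{pluripolar} (not merely null) subset of $\widehat W$. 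A Fubini argument gives an exceptional set of directions of Lebesgue measure zero, but the cone over a null set of directions need not be pluripolar, so $\widehat M$ is not under control. Naming the obstacle is not the same as overcoming it.

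For comparison, the paper avoids both problems by slicing along \emph{coordinate} directions rather than lines through the origin, inside an induction on the polyradius. Fixing a density point $a_0$, it lets $r'_0$ be the supremum of radii $r'$ for which a local extension to $\Delta_{a_0}(\rho,\epsilon)\times\Delta_0^n(r')$ exists, and shows $r'_0=r$: for $w'\in\Delta_0^{n-1}(r')$ fixed (where the extension is already known), Remark \ref{remark_Imomkulov_thm} applied in the variables $(z,w_n)$ pushes the last coordinate out to radius $r''$; grouping the variables as $(z,w')$ versus $w_n$ produces a genuine $2$-fold cross with full holomorphy on each fiber, so Theorem \ref{thm_Jarnicki_Pflug_new_version} applies, and the product-property lemma identifies the resulting hull. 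Repeating for each coordinate and passing to the envelope of holomorphy of the union $\bigcup_j\Delta_0^{j-1}(r')\times\Delta_0(r'')\times\Delta_0^{n-j}(r')$ via Chirka's Theorem \ref{Chirka_thm} reaches polyradius $\root{n}\of{{r'}^{n-1}r''}>r'_0$, a contradiction. Here the ``uniform slicing'' issue disappears because for fixed $w'$ the fiber $M_{(a,w',\cdot)}$ is a coordinate slice of the pluripolar set $M_a$, handled by the set $C$ and the null set $F$ in Proposition \ref{prop_local_extension}, and the joint-holomorphicity issue disappears because every gluing step is a bona fide cross theorem. If you want to keep a radial picture, you would still need to convert it into a coordinate-wise or cross-compatible scheme of this kind.
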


The second main purpose   is  to prove  the following  generalization  of Theorem \ref{new_Imomkulov_thm}, where $D$
need  not to be  a   disc.

  \begin{theorem}\label{local_thm_1}
  Let $X=\C^m$ and  $ Y=\C^n,$
  let $D\subset X$ be a bounded open set   and  $ G=\Delta^n_0(r)$ for some  $1<r<\infty,$  let
  $A$  be a subset of  $\overline{ D}$ with $A=A^\ast$, $\widetilde{\omega}(\cdot,A,D)<1$  on $D,$ and let $B=E^n.$ Moreover, $D$   is equipped with a
  system of approach regions
  $\big(\mathcal{A}_{\alpha}(\zeta)\big)_{\zeta\in\overline{ D},\  \alpha\in I_{\zeta}}$
  and $G$  is  equipped with the canonical  system        of approach
  regions.
  Let $M$ be  a closed  subset of  $W:=\X(A,B;D,G)$  with the following properties:
\begin{itemize}
\item[$\bullet$] $M$ is     locally   pluripolar in fibers over  $A$ and  over $B;$
 \item[$\bullet$]    $M\cap
    (A\times B)=\varnothing,$ $M\cap (D\times B)=\varnothing.$
\end{itemize}
Then  there exists  a  relatively  closed locally pluripolar subset
 $\widehat{M}$ of $\widehat{W}$ with  $\widehat{M}\cap(D\times B)=\varnothing,$
 $W\cap \widetilde{W}\cap \widehat{M}\subset M$  and $(W\cap \widetilde{W})\setminus M
 \subset \End(\widehat{W}\setminus \widehat{M})$  such that:

   for  every function   $f:\ W\setminus M\longrightarrow \C$
   satisfying the following  conditions:
   \begin{itemize}
   \item[$\bullet$]    $f\in\Cc_s(W\setminus M,\C)\cap \Oc_s(W^{\text{o}}\setminus M,\C);$
    \item[$\bullet$] $f$ is locally bounded   along  $ \big ((A\cap\partial D)\times
G\big)
    \setminus M;$
    \end{itemize}
     there exists  a unique  function
$\hat{f}\in\Oc(\widehat{\widetilde{W}}\setminus \widehat{M} ,\C)$ which admits the $\mathcal{A}$-limit $f(\zeta,\eta)$
at  every point
  $(\zeta,\eta)\in  (W\cap   \widetilde{W})\setminus M  .$
\end{theorem}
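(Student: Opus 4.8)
The plan is to use a family of analytic discs to reduce the general base $D$ to the one–dimensional base $E$ handled in Theorem~\ref{new_Imomkulov_thm}, and then to glue the resulting local extensions into a single function on $\widehat{\widetilde{W}}$. Note first that the two bullet hypotheses force $M\subset A\times G$ with $M\cap\big((D\cup A)\times B\big)=\varnothing$ and $M_a$ locally pluripolar for $a\in A$, so that \emph{fibrewise} the situation is precisely of the type covered by Theorem~\ref{new_Imomkulov_thm}.

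First I would localise. Fix $(z_0,w_0)\in\widehat{\widetilde{W}}$, i.e. $\widetilde{\omega}(z_0,A,D)+\omega(w_0,B,G)<1$, and pick $\delta>0$ with $\widetilde{\omega}(z_0,A,D)+\omega(w_0,B,G)+\delta<1$. Using the disc characterisation of the plurisubharmonic measure from \cite{nv1,nv2}, I would construct a holomorphic disc $\phi\colon E\to D$ with $\phi(0)=z_0$ and $\phi(E)\subset D$ for which the boundary set $A_\phi:=\{\zeta\in\partial E:\ \phi \text{ has an angular limit lying in } A \text{ at } \zeta\}$ satisfies $\mes(A_\phi)>0$ and $\omega(0,A_\phi,E)\le\widetilde{\omega}(z_0,A,D)+\delta$. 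To obtain an honest open extension rather than a single value, I would let $\phi$ vary in a holomorphic family of discs whose centres sweep out a neighbourhood of $z_0$ in $D$.

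With $\Phi:=\phi\times\id$, set $F:=f\circ\Phi$ on $\X(A_\phi,E^n;E,G)\setminus\Phi^{-1}(M)$. The pullback $\Phi^{-1}(M)$ is relatively closed, disjoint from $A_\phi\times B$ (since $M\cap\big((D\cup A)\times B\big)=\varnothing$), and has pluripolar fibres $(\Phi^{-1}M)_\zeta=M_{\phi(\zeta)}$ over $A_\phi$; moreover $F$ inherits local boundedness, holomorphy in $w$ over $A_\phi$, and holomorphy together with the angular–limit property in $\zeta$ for $w\in E^n$ from conditions (i)--(ii) on $f$ (the angular–limit property uses the local boundedness of $f$ along $(A\cap\partial D)\times G$ and the separate continuity). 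Hence Theorem~\ref{new_Imomkulov_thm} applies and yields a relatively closed pluripolar set $\widehat{M_\phi}\subset\widehat{\X}(A_\phi,E^n;E,\Delta^n_0(r))$, disjoint from $E^{n+1}$, and a unique $\hat F\in\Oc\big(\widehat{\X}(A_\phi,E^n;E,\Delta^n_0(r))\setminus\widehat{M_\phi},\C\big)$ extending $F|_{E^{n+1}}$. Since $\omega(0,A_\phi,E)+\omega(w_0,B,G)<1$, the point $(0,w_0)$ lies in this extension domain, so $\hat F$ near $(0,w_0)$ provides the germ of the desired $\hat f$ at $(z_0,w_0)$, while the $\Phi$–image of $\widehat{M_\phi}$ furnishes the local model for $\widehat M$.

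The final, and hardest, part is the gluing. I would define $\hat f$ by the values $\hat F(0,\cdot)$ transported through the discs and $\widehat M$ as the corresponding image set, then prove single-valuedness and holomorphy off a relatively closed locally pluripolar set. The key points are: (a) any two local extensions agree on $D\times B$, where they both reduce to $f$, so by uniqueness of analytic continuation they agree on overlaps; (b) the sets $\widehat{M_\phi}$, all disjoint from $E^{n+1}$, push forward to a set $\widehat M\subset\widehat W$ with $\widehat M\cap(D\times B)=\varnothing$; and (c) $\widehat M$ is relatively closed and locally pluripolar in $\widehat W$, which I would establish by covering $\widehat{\widetilde{W}}$ with the disc families and invoking Chirka's theorem (Theorem~\ref{Chirka_thm}) to propagate the pluripolar singularities coherently across the envelope of holomorphy, together with Theorem~\ref{Nguyen_thm} on the singularity–free cross to control the end-point inclusion $(W\cap\widetilde{W})\setminus M\subset\End(\widehat W\setminus\widehat M)$ and the $\mathcal{A}$-limit boundary behaviour. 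I expect the main obstacle to be precisely this coherent patching: ensuring that the fibrewise pluripolar sets produced along different discs fit together into one relatively closed locally pluripolar $\widehat M$ with $W\cap\widetilde{W}\cap\widehat M\subset M$, and that the glued $\hat f$ attains the prescribed $\mathcal{A}$-limit $f(\zeta,\eta)$ at the boundary points $(\zeta,\eta)$ with $\zeta\in A\cap\partial D$.
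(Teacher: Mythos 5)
Your first half follows the paper's route: after the reduction furnished by Part 2) of Proposition \ref{Nguyen_prop} (which you skip, but which is what lets one replace $\widetilde{\omega}(\cdot,A,D)$ by the relative extremal function of a compact locally pluriregular $A_0$ with $\overline{A}_0\subset A$), one fixes $z_0\in D$, takes an $\epsilon$-candidate $(\phi,\Gamma)$ from Theorem \ref{Poletsky}, pulls $f$ back through $\phi\times\id$ to the cross $\X(\Gamma,B;E,G)$, and applies Theorem \ref{new_Imomkulov_thm}. Up to that point your sketch matches the paper. Note, however, that agreement of the values $\hat{F}(0,w_0)$ coming from two different discs through $z_0$ is not an identity-principle argument on $D\times B$ as you suggest in (a): the two pullbacks live on different crosses over $E$, and the paper has to invoke Lemma 4.5 of \cite{nv2} to get well-definedness.

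The genuine gap is the passage from disc-wise data to a function holomorphic in all variables. Your device --- ``let $\phi$ vary in a holomorphic family of discs whose centres sweep out a neighbourhood of $z_0$'', then patch with Chirka's theorem --- is precisely the step that does not work as stated: Poletsky-type discs come from an infimum construction and cannot, in this generality, be chosen together with their boundary sets $\Gamma$ (which must keep positive measure and map into $\overline{A}$) so as to depend holomorphically on the centre; and Theorem \ref{Chirka_thm} concerns extension of pluripolar singularities to envelopes of holomorphy, not joint holomorphy of a fibrewise-defined function. The paper sidesteps the issue: the discs are used only to define $\hat{f}(z,\cdot)$ for each \emph{fixed} $z\in D$, which makes $\hat{f}$ separately holomorphic on the auxiliary cross $\X(A_\delta,B;D,G_\delta)$ with $A_\delta=\{\omega(\cdot,A_0,D)<\delta\}$ \emph{open}, $G_\delta=\{\omega(\cdot,B,G)<1-\delta\}$, and a singular set $\mathcal{M}$ having empty horizontal fibres and relatively closed pluripolar vertical fibres. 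Joint holomorphy then comes from Theorem \ref{thm_Jarnicki_Pflug_new_version} (the cross theorem allowing a non-closed singular set, applied with $D_0\times G_0=D\times B$, where $\hat{f}=f$ is already jointly holomorphic), and one concludes by letting $\delta\searrow 0$, since $\widehat{\X}(A_\delta,B;D,G_\delta)$ exhausts $\widehat{W}_0$. Without this (or an equivalent) device your construction produces only a fibrewise extension, and the coherent patching you yourself flag as the main obstacle remains unresolved.
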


In order to prove  Theorem \ref{new_Imomkulov_thm} and  \ref{local_thm_1}, our  strategy is  as  follows. First,
observe  that Theorem    \ref{new_Imomkulov_thm}      for  the  case $n=1$ follows  from Theorem \ref{Imomkulov_thm}.
Next, we will show that  Theorem    \ref{new_Imomkulov_thm}   for  a given $n$    implies  Theorem \ref{local_thm_1}
for this $n .$ Finally, it suffices  to show that Theorem \ref{local_thm_1} for $n=1$  implies,  in turn,  Theorem
\ref{new_Imomkulov_thm}      for  arbitrary  $n.$

\smallskip

To   make the  above  strategy  to work we  will  rely on the approach using holomorphic  discs  as  it was  done  in
\cite{nv2}. For a bounded mapping $\phi\in\Oc(E,\C^n)$ and $\zeta\in
\partial E,$ $f(\zeta)$ denotes the angular limit value of $f$ at
$\zeta$ if it exists. A classical theorem of Fatou says that $\mes\left(\{\zeta\in\partial E:\ \exists
f(\zeta)\}\right)=2\pi.$

\begin{theorem}\label{Poletsky}
Let $D$ be a bounded  open set in $\C^n,$  $A\subset \overline{D},$ $z_0\in D$  and $\epsilon>0.$ Let  $\mathcal{A}$ be
a system of approach regions for $D.$ Suppose in addition that  $A$ is locally pluriregular (relative to
$\mathcal{A}$) and that $\omega(\cdot,A,D)<1$ on $D.$ Then  there exist a bounded  mapping $ \phi\in\Oc(E,\C^n)$ and a measurable subset $\Gamma_0\subset
\partial E$ with the following  properties:
\begin{itemize}
\item[1)] Any point of $\Gamma_0$ is a density point of $\Gamma_0,$  $\phi(0)=z_0,$
    $\phi(E)\subset  \overline{D},$ $\Gamma_0 \subset \left\lbrace \zeta\in\partial E:\ \phi(\zeta)\in
    \overline{A} \right\rbrace,$  and
\begin{equation*}
1-\frac{1}{2\pi}\cdot\mes (\Gamma_0 )<\omega(z_0,A,D)+\epsilon.
\end{equation*}
\item[2)] Let  $f\in\Cc(D\cup \overline{A},\C)\cap \Oc(D,\C)$  be such that $f(D)$
 is bounded.  Then there exist a bounded function $g\in\Oc(E,\C)$ such that
  $g=f\circ \phi$ in  a neighborhood of $0\in E$  and \footnote{ Note  here that  by Part 1),  $(f\circ
  \phi)(\zeta)$ exists
   for all $\zeta\in \Gamma_0.$}
 $g(\zeta)=(f\circ \phi)(\zeta)$  for all $\zeta\in \Gamma_0.$ Moreover, $g|_{\Gamma_0}\in\Cc(\Gamma_0,\C).$
 \end{itemize}
\end{theorem}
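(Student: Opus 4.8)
The plan is to treat the two parts of the statement separately: Part 1) will come from a disc representation of the relative extremal function $\omega(\cdot,A,D)$ in the spirit of Poletsky's disc formula (as developed in \cite{nv2}), while Part 2) will be a soft consequence of the disc produced in Part 1) together with the continuity of $f$ and Fatou's theorem. So the real content, and the only genuinely hard step, lives in Part 1).

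For Part 1), the key point I would invoke is the Poletsky-type disc formula for $\omega(\cdot,A,D)$ adapted to a system of approach regions. The obstacle to consider is the function equal to $0$ on $A$ and to $1$ elsewhere, whose largest plurisubharmonic minorant (with the appropriate boundary behaviour through $\mathcal{A}$) is precisely $\omega(\cdot,A,D)$; Poletsky's principle then identifies $\omega(z_0,A,D)$ with the infimum, over bounded holomorphic discs $\phi$ with $\phi(0)=z_0$ and $\phi(E)\subset\overline{D}$, of the average time the boundary spends outside $\overline{A}$. Choosing $\phi$ to nearly realize this infimum and setting $\Gamma:=\{\zeta\in\partial E:\ \phi(\zeta)\in\overline A\}$ (the boundary value $\phi(\zeta)$ existing a.e.\ by Fatou, since $\phi$ is bounded) should give
\begin{equation*}
1-\frac{1}{2\pi}\mes(\Gamma)<\omega(z_0,A,D)+\epsilon.
\end{equation*}
Here the approach regions are exactly what makes ``$\phi$ reaches $\overline A$'' meaningful when $A$ meets $\partial D$: the disc must tend to the relevant boundary points through the prescribed regions, and the hypotheses that $A$ is locally pluriregular and that $\omega(\cdot,A,D)<1$ on $D$ are what guarantee that such near-extremal discs exist and stay controlled. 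To finish Part 1) I would then apply the Lebesgue density theorem on $\partial E$: almost every point of the measurable set $\Gamma$ is one of its density points, so replacing $\Gamma$ by its subset $\Gamma_0$ of density points preserves $\mes(\Gamma_0)=\mes(\Gamma)$, hence the displayed inequality, while arranging that every point of $\Gamma_0$ is a density point.

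For Part 2), let $f\in\Cc(D\cup\overline A,\C)\cap\Oc(D,\C)$ with $f(D)$ bounded, and set $g:=f\circ\phi$. If the disc is arranged so that $\phi(E)\subset D$ (only its angular boundary limits reaching $\overline A$), then $g\in\Oc(E,\C)$ and $g$ is bounded because $f(D)$ is; in particular $g=f\circ\phi$ on all of $E$, a fortiori near $0$. For $\zeta\in\Gamma_0$ the angular limit $\phi(\zeta)\in\overline A$ exists, and as $E\ni t\to\zeta$ through the angular approach region one has $\phi(t)\in D$ with $\phi(t)\to\phi(\zeta)$; continuity of $f$ on $D\cup\overline A$ then yields $g(t)=f(\phi(t))\to f(\phi(\zeta))=(f\circ\phi)(\zeta)$, which is the asserted identity $g(\zeta)=(f\circ\phi)(\zeta)$ on $\Gamma_0$. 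The final claim $g|_{\Gamma_0}\in\Cc(\Gamma_0,\C)$ I would reduce, via continuity of $f$ on $\overline A$, to continuity of the boundary-value map $\zeta\mapsto\phi(\zeta)$ on $\Gamma_0$, a regularity property to be read off from the disc construction.

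The hard part will be entirely in Part 1): transferring Poletsky's disc formula, classically stated for upper semicontinuous obstacles on an \emph{open} set, to the present situation where $A$ may lie on $\partial D$ and where extremality is measured through a system of approach regions. This is the step where I expect to lean directly on the disc machinery of \cite{nv2}, and where local pluriregularity of $A$ and the strict inequality $\omega(\cdot,A,D)<1$ are indispensable. By contrast, Part 2) is routine once the disc is available; the only delicate point there is ensuring $\phi(E)\subset D$ so that $f\circ\phi$ is globally holomorphic — if the construction only delivers $\phi(E)\subset\overline D$ with interior contact along $\partial D$, one must additionally check that this contact set is removable for bounded holomorphic functions, after which $g$ is recovered as the bounded holomorphic extension of $f\circ\phi$ from a neighbourhood of $0$.
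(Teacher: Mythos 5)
The paper offers no proof of this statement at all: it is quoted verbatim from Theorem 3.8 of \cite{nv2}, so the benchmark is the construction carried out there. Measured against that, your sketch has two genuine gaps. First, in Part 1) you invoke, as a known black box, a Poletsky-type disc formula for $\omega(\cdot,A,D)$ with $A\subset\overline D$ and extremality read through a system of approach regions. But the disc formula actually available (Rosay's theorem, i.e.\ Corollary \ref{Rosaycor} of this paper) applies only to an \emph{open} obstacle inside the manifold; extending it to a set $A$ that may sit on $\partial D$, with local pluriregularity and the approach regions doing real work, is precisely the content of the cited Theorem 3.8 and requires a limiting construction over a sequence of interior discs $\phi_j\in\Oc(\overline E,D)$. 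So the ``only genuinely hard step'' of your plan is exactly the statement being proved, and your sketch supplies no argument for it.

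Second, and more concretely, Part 2) of your proposal rests on arranging $\phi(E)\subset D$, which is not available: Part 1) only yields $\phi(E)\subset\overline D$, and the theorem's own careful phrasing --- $g=f\circ\phi$ merely \emph{in a neighborhood of} $0$, not on $E$ --- signals that the limit disc can touch $\partial D$ at interior points of $E$, where $f\circ\phi$ is undefined. Your fallback, removability of the contact set, does not work: $\phi^{-1}(\partial D)\cap E$ is just a relatively closed subset of $E$ with no reason to be polar or otherwise removable for bounded holomorphic functions. The correct route is to define $g$ as a normal-families limit of the functions $f\circ\phi_j$ along the approximating interior discs (boundedness of $f(D)$ gives normality), which agrees with $f\circ\phi$ near $0$ because $\phi(0)=z_0\in D$, and then to verify $g(\zeta)=(f\circ\phi)(\zeta)$ on $\Gamma_0$ and the continuity of $g|_{\Gamma_0}$ using the continuity of $f$ on $D\cup\overline A$ together with a Lusin-type shrinking of $\Gamma_0$ making $\phi|_{\Gamma_0}$ continuous (the $\epsilon$ of room in the measure estimate absorbs this shrinking). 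Your treatment of the density-point reduction and of the angular-limit computation is fine, but as written the proof of Part 2) would fail at the first step.
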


This  theorem   which  has been  proved  in Theorem 3.8 of \cite{nv2}   motivates the following
\begin{definition}
\label{candidate} We keep the hypotheses and notation of Theorem \ref{Poletsky}. Then every pair $(\phi,\Gamma_0)$
satisfying the conclusions 1)--2) of this theorem
 is said to be an {\rm $\epsilon$-candidate for the triplet $(z_0,A,D).$}
\end{definition}

Theorem \ref{Poletsky}  says that  there always exist $\epsilon$-candidates for all triplets $(z,A,D).$

The following result reduces the Main Theorem to local situations.
\begin{proposition}\label{Nguyen_prop}
We keep  the hypotheses  and notation of the  Main Theorem. 1) Suppose  in addition that  the following property
holds:

  Let  $A_0$ (resp. $B_0$) be  a subset of
 $\overline{ D}$ (resp. $\overline{ G}$) such that $A_0$ and  $B_0$ are locally pluriregular
    and that
$\overline{A}_0\subset A$ and $\overline{B}_0\subset B$  and that $\overline{A}_0, \overline{B}_0     $ are compact.
Then there exists a relatively closed pluripolar subset $\widehat{M}$ of $\widehat{\X}(A_0,B_0;D,G)$  such that
$\X(A_0,B_0;D,G)\setminus M\subset \End(\widehat{\X}(A_0,B_0;D,G)\setminus \widehat{M})$ and that
   for every  mapping  $f:\ W\longrightarrow Z$ which   satisfies    conditions (i)--(iii) of the  Main Theorem,
there exists
   a  mapping
$\hat{f}$ defined and holomorphic on
 $\widehat{\X}(A_0,B_0;D,G)\setminus\widehat{M}$ which
  admits  $\mathcal{A}$-limit $f(\zeta,\eta)$ at all points
  $(\zeta,\eta)\in   \X(A_0,B_0;D,G)\setminus M.$

  Then the conclusion of the  Main Theorem  holds.
 \\
  2) Suppose  that  the property of Part  1) is  satisfied for all $B_0\in \mathcal{F},$
  where  $\mathcal{F}\subset\mathcal{E}(B)$  such that $\widetilde{B}=\bigcup\limits_{P\in \mathcal{F}}P.$
  Then the conclusion of the  Main Theorem  holds. Here  we recall that
  \begin{equation*}
\mathcal{E}(B)=\mathcal{E}(B,\mathcal{A}):=\left\lbrace P\subset \overline{G}:\  P\ \text{is  locally pluriregular,}\
\overline{P}\subset  B \right\rbrace.
\end{equation*}
  \end{proposition}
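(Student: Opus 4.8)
The plan is to obtain the global extension by writing $\widehat{\widetilde W}$ as a union of the local domains supplied by the hypothesis and then patching. The first step is the covering
\[
\widehat{\widetilde W}=\bigcup_{(A_0,B_0)}\widehat{\X}(A_0,B_0;D,G),
\]
where $(A_0,B_0)$ runs over pairs of locally pluriregular sets with $\overline{A}_0\subset A$, $\overline{B}_0\subset B$ compact. To see that such pairs exhaust $\widetilde A,\widetilde B$, I would use that $\widetilde A=\bigcup_{P\in\mathcal{E}(A)}P$ and that for any $P\in\mathcal{E}(A)$ and any relatively compact open $U$ the trace $P\cap U$ is again locally pluriregular with $\overline{P\cap U}\subset\overline P\subset A^{\ast}=A$ compact; hence $\widetilde A=\bigcup A_0$ over such $A_0$, and likewise for $\widetilde B$. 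Since finite unions of locally pluriregular sets are locally pluriregular, this family is directed under inclusion, and the convergence $\omega(z,A_0,D)\searrow\omega(z,\widetilde A,D)=\widetilde\omega(z,A,D)$ (and the analogue in $w$) shows that every $(z,w)$ with $\widetilde\omega(z,w)<1$ already satisfies $\omega(z,A_0,D)+\omega(w,B_0,G)<1$ for $A_0,B_0$ large enough, i.e. lies in some $\widehat{\X}(A_0,B_0;D,G)$. The same computation gives $\widetilde W=\bigcup\X(A_0,B_0;D,G)$.

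For each pair the hypothesis of Part 1) yields a relatively closed pluripolar set $\widehat M_{A_0,B_0}\subset\widehat{\X}(A_0,B_0;D,G)$, a holomorphic extension $\hat f_{A_0,B_0}$ off it, and the end-point inclusion $\X(A_0,B_0;D,G)\setminus M\subset\End(\widehat{\X}(A_0,B_0;D,G)\setminus\widehat M_{A_0,B_0})$, with $\mathcal{A}$-limit $f(\zeta,\eta)$ along $\X(A_0,B_0;D,G)\setminus M$. The crucial compatibility is that for $A_0\subset A_0'$, $B_0\subset B_0'$ the two extensions agree on the smaller domain: both admit the same $\mathcal{A}$-limit $f$ along $\X(A_0,B_0;D,G)\setminus M$, and since this interface carries the nonempty locally pluriregular (hence non-pluripolar) sets $A_0$, $B_0$, restricting to a common open piece of $D\times G$ and applying the identity principle for holomorphic mappings into $Z$ forces $\hat f_{A_0,B_0}=\hat f_{A_0',B_0'}$ on each connected component. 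I would then define $\widehat M$ as the minimal singular set, namely the complement in $\widehat{\widetilde W}$ of the largest open set to which the compatible germs spread holomorphically, and let $\hat f$ be the resulting global map; locally $\widehat M$ is contained in some $\widehat M_{A_0,B_0}$, so it is relatively closed and locally pluripolar, hence pluripolar, and $\hat f\in\Oc(\widehat{\widetilde W}\setminus\widehat M,Z)$.

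It remains to verify the three boundary conclusions, all of which are local at points of $\widetilde W$ and therefore transfer from the local statements via the covering. The end-point inclusion $\widetilde W\setminus M\subset\End(\widehat{\widetilde W}\setminus\widehat M)$ follows because each $(\zeta,\eta)\in\widetilde W\setminus M$ lies in some $\X(A_0,B_0;D,G)\setminus M$, the local end-point property holds there, and end-points persist under the enlargement $\widehat{\X}(A_0,B_0;D,G)\setminus\widehat M_{A_0,B_0}\subset\widehat{\widetilde W}\setminus\widehat M$; the same localization gives the $\mathcal{A}$-limit $f(\zeta,\eta)$ and, for interior points, $\widehat M\cap\widetilde W\subset M$, since at such a point $\hat f$ extends holomorphically and so cannot belong to the minimal singular set unless the point already lies in $M$. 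Uniqueness of $\hat f$ is again the identity principle, using that $\widetilde W\setminus M$ meets each component of $\widehat{\widetilde W}\setminus\widehat M$ in a non-pluripolar set of end-points. For Part 2) only the bookkeeping of the covering changes: since $\widetilde B=\bigcup_{P\in\mathcal F}P$, the sets $B_0$ drawn from $\mathcal F$ already exhaust $\widetilde B$, so the same union represents $\widehat{\widetilde W}$ while the hypothesis is invoked only for $B_0\in\mathcal F$.

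The hard part will be the gluing, i.e. showing that the separately constructed pluripolar singular sets $\widehat M_{A_0,B_0}$ organize into a single relatively closed pluripolar $\widehat M$ and that the local extensions are genuinely consistent on overlaps. This rests entirely on the identity principle together with the non-pluripolarity of the interfaces $\X(A_0,B_0;D,G)\setminus M$, and is the place where the countability at infinity of the manifolds is used, to replace the directed family by a countable cofinal subfamily and thereby exhibit $\hat f$ and $\widehat M$ concretely.
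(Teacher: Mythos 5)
The paper itself gives no self-contained proof here: it simply points to Section~9 of \cite{nv2}, where the passage from crosses built on compact locally pluriregular pieces $(A_0,B_0)$ to the cross built on $(\widetilde A,\widetilde B)$ is carried out. The mechanism there (and, visibly, in Steps~1--2 of the present paper's proof of the Main Theorem) is not a pure covering-and-gluing argument: one glues the local extensions into a separately holomorphic function on an \emph{open} cross of level sets $A_\delta=\{\omega(\cdot,A_0,D)<\delta\}$, $B_\delta=\{\omega(\cdot,B_0,G)<\delta\}$, applies a cross theorem for open data (Theorem~\ref{thm_discs} / Theorem~\ref{thm_Jarnicki_Pflug}), and then lets $\delta\searrow 0$ using the monotone convergence of the relative extremal functions. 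Your Part~1) argument replaces this by the directed family of pairs $(A_0,B_0)$, the identity $\inf_{A_0}\omega(\cdot,A_0,D)=\omega(\cdot,\widetilde A,D)$, and the identity principle on overlaps. That architecture is defensible for Part~1), but note two points you gloss over: (a) the covering identity is itself the monotone-convergence lemma of \cite{nv2} (Proposition~3.5 there), not bookkeeping; and (b) when $A_0\subset\partial D$ \emph{and} $B_0\subset\partial G$ the interface $\X(A_0,B_0;D,G)$ contains no open piece of $D\times G$, so ``restricting to a common open piece of $D\times G$ and applying the identity principle'' does not apply literally --- you need the boundary uniqueness theorem (two holomorphic maps on $\widehat{\X}$ with the same $\mathcal A$-limits on a non-pluripolar boundary cross coincide), which is available but must be invoked.

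The genuine gap is in Part~2). The family $\mathcal F$ is \emph{not} assumed closed under finite unions, so the pairs $(A_0,B_0)$ with $B_0\in\mathcal F$ do not form a directed family, and your covering breaks down: one has
\[
\bigcup_{B_0\in\mathcal F}\widehat{\X}(A_0,B_0;D,G)
=\Bigl\{(z,w):\ \omega(z,A_0,D)+\inf_{B_0\in\mathcal F}\omega(w,B_0,G)<1\Bigr\},
\]
and $\inf_{B_0\in\mathcal F}\omega(\cdot,B_0,G)$ is in general \emph{strictly larger} than $\omega(\cdot,\widetilde B,G)$ (already for $\mathcal F=\{P_1,P_2\}$ with two disjoint pieces, $\min(\omega(\cdot,P_1,G),\omega(\cdot,P_2,G))>\omega(\cdot,P_1\cup P_2,G)$ is typical). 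So the union of the local domains need not exhaust $\widehat{\widetilde W}$, and ``only the bookkeeping of the covering changes'' is false. To bridge the gap one must, as in \cite{nv2}, observe that the glued extension is separately holomorphic on a cross whose second factor is the \emph{open} set $B':=\bigcup_{B_0\in\mathcal F}\{\omega(\cdot,B_0,G)<\delta\}\supset\widetilde B\cap G$, apply the open-cross theorem to recover $\widehat{\X}(A_\delta,B';D,G)$, estimate $\omega(\cdot,B',G)$ against $\widetilde\omega(\cdot,B,G)$, and let $\delta\searrow0$. This extra propagation step is the substance of Part~2) and is missing from your proposal.
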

This result  permits to pass from the relative extremal functions $\omega(\cdot,A_0,D),$ where $A_0\in\mathcal{E}(A),$
to the plurisubharmonic measure $\widetilde{\omega}(\cdot,A,D).$
\begin{proof}
The  case  where $M=\varnothing$  is  treated by the first author in \cite{nv2} where he starts from Theorem 8.2
therein in order to prove the main theorem in Section 9 of that article. This  method also works in the present
context making the obviously necessary changes.
\end{proof}

In the light of Part 2) of  Proposition  \ref{Nguyen_prop},  Theorem  \ref{local_thm_1} is  reduced  to prove the
following  result.

\begin{theorem}\label{local_thm_2}
  Let $X=\C^m$ and  $ Y=\C^n,$
  let $D\subset X$ be a bounded open set   and  $ G=\Delta^n_0(r)$ for some  $r>1,$  let
  $A$  be a subset of  $\overline{ D}$  such that $A=A^{\ast},$  $\widetilde{\omega}(\cdot,A,D)<1$ on $D,$ and let $B=E^n.$ Moreover, $D$   is equipped with a
  system of approach regions
  $\big(\mathcal{A}_{\alpha}(\zeta)\big)_{\zeta\in\overline{ D},\  \alpha\in I_{\zeta}}$
  and $G$  is  equipped with the canonical  system of approach
  regions.   Let  $A_0$  be  a subset of
 $\overline{ D}$  such that $A_0$ is locally pluriregular
    and that
$\overline{A}_0\subset A.$  Put  $W_0:=\X(A_0,B;D,G).$
  Let $M$ be  a relatively closed  subset of  $W$  with the following properties:
\begin{itemize}
\item[$\bullet$] $M$ is     locally   pluripolar in fibers over  $A;$ \item[$\bullet$]    $M\cap \big((A\cup
    D)\times B\big)=\varnothing.$
\end{itemize}
 Then  there exists  a  relatively  closed locally pluripolar subset
 $\widehat{M}$ of $\widehat{W}_0$
 with  $\widehat{M}\cap(D\times B)=\varnothing$
 and   $(W\cap   W_0)\setminus M\subset  \End(\widehat{W}_0\setminus \widehat{M}) $ such that:

   for  every function   $f:\ W\setminus M\longrightarrow \C$
   satisfying the following  conditions:
   \begin{itemize}
   \item[$\bullet$]    $f\in\Cc_s(W\setminus M,\C)\cap \Oc_s(W^{\text{o}}\setminus M,\C);$
    \item[$\bullet$] $f$ is locally bounded   along  $ \big ((A\cap\partial D)\times
G\big)
    \setminus M;$
    \end{itemize}
     there exists  a unique  function
$\hat{f}\in\Oc(\widehat{W}_0\setminus \widehat{M} ,\C)$ which admits the $\mathcal{A}$-limit $f(\zeta,\eta)$ at  every
point
  $(\zeta,\eta)\in  (W\cap   W_0)\setminus M  .$
\end{theorem}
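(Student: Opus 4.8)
The plan is to deduce Theorem \ref{local_thm_2} from Theorem \ref{new_Imomkulov_thm} (for the same $n$) by the method of holomorphic discs, using Theorem \ref{Poletsky} to pull the base pair $(D,A_0)$ back to the unit disc. First I would make two harmless reductions. Replacing $D$ by the union of those connected components that meet $\overline{A_0}$ (the other components contribute nothing to $\widehat{W}_0$, since there $\omega(\cdot,A_0,D)\equiv 1$), we may assume $\omega(\cdot,A_0,D)<1$ on $D$, so that Theorem \ref{Poletsky} applies to every triplet $(z_0,A_0,D)$. Next, since $M\cap((A\cup D)\times B)=\varnothing$ and $M$ is relatively closed in $W$, one has $M^w=\varnothing$ for all $w\in B$; hence for each $w\in B$ the slice $f(\cdot,w)$ is holomorphic on $D$ and continuous on $D\cup A_0$, while for each $a\in A_0$ the slice $f(a,\cdot)$ is holomorphic on $G\setminus M_a$ with $M_a$ locally pluripolar and $M_a\cap B=\varnothing$. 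After a routine localisation (the statement is local in the base and the target is $\C$) I may also assume the relevant slices of $f$ are bounded, which is what part~2) of Theorem \ref{Poletsky} requires; the boundedness near $\overline{A_0}\cap\partial D$ is exactly hypothesis~(ii).

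Fix $z_0\in D$ and $\epsilon>0$ and let $(\phi,\Gamma_0)$ be an $\epsilon$-candidate for $(z_0,A_0,D)$ (Definition \ref{candidate}): a bounded disc $\phi\in\Oc(E,\C^m)$ with $\phi(0)=z_0$, $\phi(E)\subset\overline D$, $\phi(\Gamma_0)\subset\overline{A_0}\subset A$, and $1-\tfrac1{2\pi}\mes(\Gamma_0)<\omega(z_0,A_0,D)+\epsilon$. I would then pull $f$ back along $\Phi:=\phi\times\id$ and verify the hypotheses of Theorem \ref{new_Imomkulov_thm} for the cross $\X(\Gamma_0,B_\delta;E,G)$, where $B_\delta:=\Delta^n_0(1-\delta)$ is a slightly shrunk polydisc. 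Since $M$ is relatively closed and avoids $A\times B$, it avoids $\Gamma_0\times\overline{B_\delta}$, and after rescaling the $w$-variable by $(1-\delta)^{-1}$ (so that $B_\delta$ becomes $E^n$ and $G$ becomes $\Delta^n_0(r/(1-\delta))$) the pulled-back singularity set $\widetilde M:=\{(\zeta,w):\zeta\in\Gamma_0,\ w\in M_{\phi(\zeta)}\}$ is relatively closed, pluripolar in fibres, and disjoint from $\Gamma_0\times\overline E^n$. The $w$-slices of $F:=f\circ\Phi$ are holomorphic off $\widetilde M$, and part~2) of Theorem \ref{Poletsky}, applied to each bounded slice $f(\cdot,w)$ with $w\in B_\delta$, produces a bounded $g_w\in\Oc(E)$ agreeing with $\zeta\mapsto f(\phi(\zeta),w)$ near $0$ and having the angular boundary values $f(\phi(\zeta),w)$ on $\Gamma_0$; thus the $\zeta$-slices of $F$ extend holomorphically to $E$ with the required angular limits. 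Theorem \ref{new_Imomkulov_thm} now yields $\widehat F\in\Oc\big(\widehat\X(\Gamma_0,B_\delta;E,G)\setminus\widehat{\widetilde M}\big)$ extending $F|_{E\times B_\delta}$, with $\widehat{\widetilde M}$ relatively closed, pluripolar, and disjoint from $E^{n+1}$.

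The value at $z_0$ is harvested from the slice $\zeta=0$. Since $\omega(0,\Gamma_0,E)=1-\tfrac1{2\pi}\mes(\Gamma_0)<\omega(z_0,A_0,D)+\epsilon$, the point $(0,w)$ lies in $\widehat\X(\Gamma_0,B_\delta;E,G)$ whenever $\omega(w,B_\delta,G)<1-\omega(z_0,A_0,D)-\epsilon$; and because $\widehat F=F=f(\phi(\,\cdot\,),\cdot)$ on $E\times B_\delta$ with $\phi(0)=z_0$, the function $w\mapsto\widehat F(0,w)$ extends $f(z_0,\cdot)$ holomorphically to that $w$-range minus the pluripolar slice $\widehat{\widetilde M}_0$. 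Letting $\epsilon,\delta\downarrow0$ and recalling $\widehat W_0=\{(z,w):\omega(z,A_0,D)+\omega(w,B,G)<1\}$, I obtain for every $z_0\in D$ a holomorphic extension of $f(z_0,\cdot)$ to the whole slice $(\widehat W_0)_{z_0}$ off a pluripolar set. Combined with the holomorphy of $f(\cdot,w)$ on $D$ for $w\in B$, this produces a function $\hat f$ that is separately holomorphic, single-valued (by the identity theorem, since all slice extensions agree with $f$ on the non-pluripolar set $D\times B$, whence independence of the candidate follows), and locally bounded on $\widehat W_0$ off a set which is pluripolar in slices.

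The last and hardest step is the gluing: upgrading $\hat f$ to a genuine holomorphic function on $\widehat W_0\setminus\widehat M$ for a single relatively closed locally pluripolar $\widehat M$ with $\widehat M\cap(D\times B)=\varnothing$ and $\widehat M\cap W\cap\widetilde W\subset M$, and verifying the $\mathcal A$-limit at end-points. Here I would exhaust $\widehat W_0$ by relatively compact subcrosses and apply the classical cross theorem with singularities Theorem \ref{thm_Jarnicki_Pflug} on the interior part $A_0\cap D$ together with Chirka's Theorem \ref{Chirka_thm} to transport the pluripolar singular locus to the envelope of holomorphy and thereby define $\widehat M$ coherently, while the disc construction above supplies both the extension and the singular set across the boundary part $A_0\cap\partial D$, where Theorem \ref{thm_Jarnicki_Pflug} does not apply. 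The $\mathcal A$-limit assertion at each $(\zeta,\eta)\in(W\cap W_0)\setminus M$ then follows from the end-point inclusion $\widetilde W\subset\End(\widehat{\widetilde W})$ and the continuity of the angular boundary values furnished by part~2) of Theorem \ref{Poletsky}, and uniqueness is immediate from the identity theorem. The main obstacle is precisely this coherent identification of $\widehat M$ as a relatively closed locally pluripolar set (rather than merely slice-pluripolar) and the control of the extension at the distinguished boundary $(A_0\cap\partial D)\times G$, where only local boundedness and fibrewise continuity are available.
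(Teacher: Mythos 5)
Your first half follows the paper's proof of Theorem \ref{local_thm_2} quite closely: fix $z_0\in D$, take an $\epsilon$-candidate $(\phi,\Gamma)$ for $(z_0,A_0,D)$ via Theorem \ref{Poletsky}, pull $f$ back to the cross $\X(\Gamma,B;E,G)$, check the hypotheses of Theorem \ref{new_Imomkulov_thm}, and read off the value of the extension on the fiber $\{z_0\}\times G$ at $\zeta=0$ (well-definedness being handled by comparing candidates on the common non-pluripolar set $D\times B$). Up to the cosmetic rescaling of $B$ to $B_\delta$, this is exactly the paper's construction of the set $\mathcal{M}$ with relatively closed pluripolar \emph{vertical fibers} $\mathcal{M}_z$ and of the fiberwise extension $\hat f(z,\cdot)$.

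The gap is in the step you yourself flag as ``the last and hardest'': passing from the slice-pluripolar $\mathcal{M}$ to a genuine relatively closed locally pluripolar $\widehat M$. Your proposed tools do not close it. Theorem \ref{thm_Jarnicki_Pflug} requires the singular set to be \emph{relatively closed in the cross}, and at this stage $\mathcal{M}$ is only known to be relatively closed and pluripolar fiber by fiber in the $z$-direction; so you cannot invoke it on the interior part, and Chirka's theorem only transports a pluripolar set you do not yet have. The paper's resolution is precisely Theorem \ref{thm_Jarnicki_Pflug_new_version} (the variant from \cite{jp6} for singular sets that are \emph{not} assumed relatively closed, at the price of requiring $f$ to be already holomorphic on a full open product $D_0\times G_0$). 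One sets $A_\delta:=\{z\in D:\ \omega(z,A_0,D)<\delta\}$, $G_\delta:=\{w\in G:\ \omega(w,B,G)<1-\delta\}$, defines $\tilde f_\delta$ on $\X(A_\delta,B;D,G_\delta)\setminus\mathcal{M}$ by $\hat f$ on $A_\delta\times G_\delta$ and by $f$ on $D\times B$, and applies Theorem \ref{thm_Jarnicki_Pflug_new_version} with $D_0:=D$, $G_0:=B$ — this is legitimate because $\mathcal{M}\cap(D\times B)=\varnothing$, all horizontal fibers $\mathcal{M}^w$, $w\in B$, are empty, all vertical fibers are relatively closed pluripolar, and $\tilde f_\delta$ is holomorphic on all of $D\times B$. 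The output is a genuinely relatively closed pluripolar $\widehat M_\delta\subset\widehat{\X}(A_\delta,B;D,G_\delta)$ disjoint from $D\times B$; since $\widehat W_0=\bigcup_{0<\delta<1}A_\delta\times G_\delta$, gluing the $\widehat M_{\delta_k}$ over a sequence $\delta_k\searrow0$ finishes the proof. Without this intermediate theorem (or an equivalent device for upgrading slice-pluripolarity to pluripolarity), your argument stops short of the statement.
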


\begin{proof}
First  we  will  find a subset  $\mathcal{M}\subset \widehat{W}_0$ such that
\begin{itemize}
\item[$\bullet$] $\mathcal{M}\cap (D\times B)=\varnothing;$ \item[$\bullet$] for all $z\in D,$  the vertical
    fibers $\mathcal{M}_z:=\{ w\in G:\ (z,w)\in\mathcal{M}\} $    are relatively closed pluripolar in
    $(\widehat{W}_0)_z:=\{ w\in G:\ (z,w)\in \widehat{W}_0  \} $; \item[$\bullet$]  $f|_{D\times B}$ extends to
    $\hat{f}$ which is  well-defined on $ \widehat{W}_0\setminus \mathcal{M}$ and which satisfies
    $\hat{f}(z,\cdot)\in\Oc\big(( \widehat{W}_0\setminus \mathcal{M})_z,\C\big)$ for all $z\in D,$ where
$$
( \widehat{W}_0\setminus \mathcal{M})_z:=\left\lbrace w\in G:\ (z,w)\in  \widehat{W}_0\setminus \mathcal{M}
\right\rbrace.
$$
 \end{itemize}
To this end fix a $z_0\in D$. We want to construct  the vertical fiber  $\mathcal{M}_{z_0}.$  Take  an arbitrary $\epsilon>0$  such that
 \begin{equation*}
 \omega(z_0,A_0,D)+\epsilon<1.
 \end{equation*}
   By Theorem \ref{Poletsky} and Definition \ref{candidate}, there is
 an $\epsilon$-candidate $(\phi,\Gamma)$    for $(z_0,A_0,D).$
 By shrinking $\Gamma$  using Lusin's theorem, we may assume  without loss of generality
 that  $\phi|_{\Gamma}$ is  continuous.
 Moreover, using the hypotheses on $M$  and  on $f,$ we see that the function  $f_{\phi},$  defined by
 \begin{equation}\label{eq2_local_thm_1_Step1}
 f_{\phi}(t,w):=f(\phi(t),w),\qquad  (t,w)\in \X\left(\Gamma,B;E,G\right)\setminus M_{\phi},
 \end{equation}
satisfies the hypotheses of Theorem \ref{new_Imomkulov_thm}, where
$$M_{\phi}:=\{(t,w)\in \Gamma\times G:\  (\phi(t),w)\in M\}.$$
By this theorem, let $\widehat{M}_{\phi}$ be the relatively closed pluripolar subset of $
\widehat{\X}\left(\Gamma,B;E,G\right)      $ with   $\widehat{M}_{\phi}\cap (E\times B)=\varnothing$  and let
$\hat{f}_{\phi}\in\Oc\big(      \widehat{\X}\left(\Gamma,B;E,G\right)     \setminus \widehat{M}_{\phi},\C\big) $ be
such that
  \begin{equation}\label{eq3_local_thm_1_Step1}
   (\Alim \hat{f}_{\phi})(t,w)=f_{\phi}(t,w),\qquad  (t,w)\in
  \X^{\text{o}}\left(\Gamma, B;E,G\right)\setminus  M_{\phi}.
  \end{equation}

Using the   above  discussion we will define $\mathcal{M}_{z_0}$ and    the desired extension function
$\hat{f}(z_0,\cdot)$ on $( \widehat{W}_0\setminus \mathcal{M})_{z_0}$ as follows:

fix a point $(z_0,w_0)\in\widehat W$ and an $\epsilon >0$ such that
\begin{equation*}
 \omega(z_0,A_0,D)+\omega(w_0,B,G)+\epsilon<1
 \end{equation*}
 and  there exists an $\epsilon$-candidate
$(\phi,\Gamma)$    for $(z_0,A,D)$  with $(0,w_0)\in\widehat{\X}\left(\Gamma,B;E,G\right)\setminus
 \widehat{M}_{\phi}.$
 Then  the value of $\hat{f}$  at $(z_0,w_0)$ is, by our
definition, given as
\begin{equation}\label{eq_local_thm_1_Step1_formula_hatf}
 \hat{f}(z_0,w_0):=\hat{f}_{\phi}(0,w_0) ,
\end{equation}
where $\hat{f}_{\phi}$ is  defined  in (\ref{eq2_local_thm_1_Step1})--(\ref{eq3_local_thm_1_Step1}). On the other
hand, put
\begin{equation*}
\mathcal{M}_{z_0}:=\left\lbrace w\in G:\  \forall \phi\ \text{as above},\ (0,w)\in\widehat{M}_{\phi}\right\rbrace.
\end{equation*}
 Using  Lemma 4.5  in \cite{nv2} it
can be checked
  that $\hat{f}(z_0,\cdot)$ is well-defined  on $ (\widehat{W}_0)_{z_0}\setminus \mathcal{M}_{z_0}.$
 Moreover, it is  easy to see that  $\mathcal{M}\cap (D\times B)=\varnothing $
and that all vertical  fibers $\mathcal{M}_z$ with $z\in D$   are relatively closed pluripolar.  This completes the
construction of  $\mathcal{M}\subset \widehat{W}_0$ and  of $\hat{f}$  on $  \widehat{W}_0\setminus \mathcal{M}.$

For all $0<\delta<\frac{1}{2}$ let
\begin{equation}\label{eq_Step1_Adelta}
 A_{\delta}:=\left\lbrace z\in D:\
 \omega(z,A_0,D)<\delta\right\rbrace\ \text{and}\
  G_{\delta}:=\left\lbrace w\in G:\  \omega(w,B,G)<1-\delta\right\rbrace.
\end{equation}
 We are able to define a new function $\tilde{f}_{\delta}$ on $\X\left(A_{\delta},B;D,G_{\delta}
 \right)\setminus \mathcal{M}$ as follows
\begin{equation}\label{eq_tildefdelta}
 \tilde{f}_{\delta}(z,w):=
\begin{cases}
 \hat{f}(z,w)
  & \qquad (z,w)\in  (A_{\delta}\times G_{\delta})\setminus \mathcal{M}, \\
  f(z,w) &   \qquad (z,w)\in D\times B       .
\end{cases}
\end{equation}
  Using the hypotheses on $f$ and the previous  paragraph, we see  that
  $$ \tilde{f}_{\delta}\in \Oc_s\Big(
\X\left(A_{\delta}, B;D,G_{\delta}
 \right)\setminus \mathcal{M},\C\Big).$$
Observe that $A_{\delta}$ is  an open set in $D,$ all vertical  fibers $\mathcal{M}_z$ with $  z\in D$  are relatively
closed pluripolar  and  all horizontal  fibers $\mathcal{M}^w$ with $ w\in B$ are empty, and
$\tilde{f}_{\delta}|_{D\times B}$ is  holomorphic.    Consequently,
 $\tilde{f}_{\delta}$ satisfies the hypotheses  of Theorem
 \ref{thm_Jarnicki_Pflug_new_version} for $D_0:=D,$  $G_0:=B,$  $A:=A_{\delta},$ $B:=B,$ $D:=D,$ $G:=G_{\delta},$
 where the left sides of the above assignments are the notation of Theorem  \ref{thm_Jarnicki_Pflug_new_version}.
 Applying  this theorem  yields a  relatively closed  pluripolar   subset $\widehat{M}_{\delta}$  of
$\widehat{\X}\left(A_{\delta}, B;D,G_{\delta}
 \right)$ with   $\widehat{M}_{\delta}\cap (D\times B)=\varnothing $ and a  function  $\hat{f}_{\delta}\in
 \Oc\Big(\widehat{\X}\left(A_{\delta}, B;D,G_{\delta}
 \right)\setminus \widehat{M}_{\delta}  ,\C \Big)$
 such that
 \begin{equation*}
 \hat{f}_{\delta}(z,w)= \tilde{f}_{\delta}(z,w),\qquad (z,w)\in \X\left(A_{\delta}, B;D,G_{\delta}
 \right)\setminus \mathcal{M}.
\end{equation*}
 This, combined with (\ref{eq_tildefdelta}), implies that
 $\hat{f}$ (given  in (\ref{eq_local_thm_1_Step1_formula_hatf}))
 extends holomorphically to $(A_{\delta}\times G_{\delta})\setminus \widehat{M}_{\delta}.$
Note that $\widehat M_\delta$ may be taken as singular with respect to all these extended functions. On the
other hand, it follows from  (\ref{eq_Step1_Adelta}) that
 \begin{equation*}
  \widehat{W}_0=\widehat{\X}\left(A_0,B;D,G
 \right)=\bigcup\limits_{0<\delta<1}A_{\delta}\times G_{\delta}.
  \end{equation*}
  Now  fix a sequence  $(\delta_k)_{k=1}^{\infty}$ such that  $0<\delta_{k}<1$ and $\delta_k\searrow 0^{+}.$
  Then, using the last  equality, we may glue   $(\widehat{M}_{\delta_k})_{k=1}^{\infty}$  together in order to
  obtain  a  relatively closed  pluripolar   subset $\widehat{M}$  of  $\widehat{W}_0$
and an extension function $\hat{f}\in\Oc(  \widehat{W}_0\setminus \widehat{M},\C)$ with the desired  properties of the
theorem.
\end{proof}

Prior to the proof of Theorem  \ref{new_Imomkulov_thm} for all $n$  we make some preparation. Under the hypotheses and
notation  of Theorem    \ref{new_Imomkulov_thm}  we  establish  the  following

\begin{proposition} \label{prop_local_extension}
Let $A$ be  a measurable subset of $\partial E$ with $\mes(A)>0.$ Then,
  for every  density point  $a_0\in A$   and every $r^{'}\in (1,r),$ there  exist $0<\rho=\rho_{r^{'}},
\epsilon=\epsilon_{r'}<1$  and   a   relatively closed  pluripolar set $T=T_{r^{'}}\subset
\Delta_{a_0}(\rho,\epsilon)\times  \Delta_0^n(r^{'})$ with $T\cap (\Delta_{a_0}(\rho,\epsilon)\times E^n)=\varnothing$
 such that every function $f$ satisfying   the hypotheses of Theorem
 \ref{new_Imomkulov_thm} extends holomorphically to  $\big(\Delta_{a_0}(\rho,\epsilon)\times  \Delta_0^n(r^{'})\big)
\setminus T.$
\end{proposition}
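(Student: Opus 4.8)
The plan is to argue by induction on $n$, the base case coming from the one–variable theory and the inductive step peeling off the last coordinate $w_n$ by means of Theorem \ref{local_thm_1} in its one-dimensional form ($n=1$, which is available since it follows from the already-established $n=1$ case of Theorem \ref{new_Imomkulov_thm}) together with the inductive hypothesis. Throughout, the point $a_0$ being a density point of $A$ is what guarantees that the sublevel sets $\Delta_{a_0}(\rho,\epsilon)$ are nonempty open subsets of $E$ accumulating at $a_0$.

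For the base case $n=1$ I would simply invoke Theorem \ref{new_Imomkulov_thm} (for $n=1$, cf. Proposition \ref{Imomkulov_thm} and Remark \ref{remark_Imomkulov_thm}) to obtain a global extension $\hat f\in\Oc(\widehat{\X}(A,E;E,\Delta_0(r))\setminus\widehat M,\C)$, and then localize. Using monotonicity of the relative extremal function one has $\omega(z,A,E)\le\omega(z,A\cap\Delta_{a_0}(\rho),\Delta_{a_0}(\rho)\cap E)<\epsilon$ for $z\in\Delta_{a_0}(\rho,\epsilon)$, while $\omega(w,E,\Delta_0(r))\le\frac{\log r'}{\log r}<1$ for $w\in\Delta_0(r')$. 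Hence, choosing $\epsilon<1-\frac{\log r'}{\log r}$, one gets $\Delta_{a_0}(\rho,\epsilon)\times\Delta_0(r')\subset\widehat{\X}(A,E;E,\Delta_0(r))$, and $T:=\widehat M\cap(\Delta_{a_0}(\rho,\epsilon)\times\Delta_0(r'))$ does the job.

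For the inductive step write $w=(w',w_n)$. First, for fixed $w_n$ I would apply Proposition \ref{prop_local_extension} in dimension $n-1$ to $(z,w')\mapsto f(z,w',w_n)$; this function satisfies the hypotheses of Theorem \ref{new_Imomkulov_thm} in dimension $n-1$ as soon as the slice $(M_a)^{w_n}=\{w':(w',w_n)\in M_a\}$ is pluripolar in $\Delta_0^{n-1}(r)$, which holds for all $w_n$ off a polar exceptional set because each $M_a$ is pluripolar, while $M\cap(A\times\overline E^n)=\varnothing$ forces this slice to avoid $\overline E^{n-1}$. This extends $f$ in $(z,w')$ off a relatively closed pluripolar set $T'$ disjoint from the unit–polydisc fibre. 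I would then peel off $w_n$: regarding $u=(z,w')$ as the first variable — with boundary set $\{(a,w'):a\in A\}$, the boundary occurring only in the $z$-slot — and $w_n$ as the one-dimensional second variable ($G=\Delta_0(r)$, $B=E$), I apply Theorem \ref{local_thm_1} for $n=1$. The boundary-fibre data is holomorphy of $w_n\mapsto f(a,w',w_n)$ off the polar slice $(M_a)^{w'}$, and the interior holomorphy in $u$ for $w_n\in E$ is precisely the extension just produced. Finally the exceptional polar parameter sets and the interior locus $T'$ are merged, and Chirka's theorem (Theorem \ref{Chirka_thm}) realizes the union as a single relatively closed pluripolar $T\subset\Delta_{a_0}(\rho,\epsilon)\times\Delta_0^n(r')$ with $T\cap(\Delta_{a_0}(\rho,\epsilon)\times E^n)=\varnothing$; a second extremal-function estimate combining the $w'$- and $w_n$-directions gives the required containment of the target region in the extension domain.

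The hard part will be that the $(z,w')$–extension introduces an interior pluripolar singular set $T'$ which lands inside the \emph{core} of the subsequent $w_n$-cross, so a naive use of Theorem \ref{local_thm_1} (whose hypotheses demand a singularity-free core $D\times B$) is unavailable, and Theorem \ref{thm_Jarnicki_Pflug_new_version} is barred because the relevant set $A$ sits on $\partial E$. Overcoming this requires carrying the interior singularity along, i.e. using the cross theorem in the more flexible form that tolerates an auxiliary relatively closed pluripolar set in the core, exactly as the set $S$ does in Proposition \ref{Imomkulov_thm}, and then invoking Chirka's theorem to propagate and glue the two singular directions into one relatively closed pluripolar locus. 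The second technical hurdle is to control the polar set of ``bad'' $w_n$ (where $(M_a)^{w_n}$ fails to be pluripolar) and to verify that it contributes nothing to $T$ over the unit–polydisc fibre, so that the extended function genuinely restricts to $f$ on $E^{n+1}$.
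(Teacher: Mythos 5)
There is a genuine gap at the heart of your inductive step, namely at the point where you pass from the slice-by-slice application of the dimension-$(n-1)$ statement to a single cross in $\bigl((z,w'),w_n\bigr)$. Proposition \ref{prop_local_extension} in dimension $n-1$, applied to $(z,w')\mapsto f(z,w',w_n)$, produces a singular set that depends on the singularity data of that slice, i.e.\ on $M^{w_n}:=\{(a,w'):(a,w',w_n)\in M\}$, hence on $w_n$. So what you actually obtain is a family $T'_{w_n}$, $w_n\in E$, and the union $\bigcup_{w_n}T'_{w_n}\times\{w_n\}$ is neither obviously relatively closed nor obviously pluripolar in the product. Your proposed remedy --- to use the cross theorem ``in the more flexible form that tolerates an auxiliary relatively closed pluripolar set in the core, exactly as the set $S$ does in Proposition \ref{Imomkulov_thm}'' --- begs the question: the $S$ of Proposition \ref{Imomkulov_thm} is a \emph{single} relatively closed pluripolar subset of $D\times B$ given in advance, which is precisely what you do not have. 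A second unresolved point is your treatment of the bad parameters: the polar set of $w_n$ for which $(M_a)^{w_n}$ fails to be pluripolar depends on $a\in A$, and an uncountable union of polar sets over $a\in A$ is not controlled; one must instead work with the set of good \emph{pairs} $(a,w')$ (resp.\ $(a,w_n)$) and a Fubini-type measure argument, as the paper does with its sets $C$ and $F$.

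The paper sidesteps both difficulties by organizing the induction differently: not on the dimension $n$, but as a continuity (supremum) argument on the radius. One sets $r'_0:=\sup\{r': \rho_{r'},\epsilon_{r'},T_{r'}\text{ exist}\}$, notes $r'_0\ge 1$, and assumes $r'_0<r$. For $r'<r'_0$ one then already possesses an extension on the \emph{full} product $\Delta_{a_0}(\rho,\epsilon)\times\Delta_0^n(r')$ minus a \emph{single} relatively closed pluripolar set $S=T_{r'}$; this $S$ is exactly the fixed interior singularity that Remark \ref{remark_Imomkulov_thm} can accept when one peels off the coordinate $w_j$ by a one-dimensional boundary cross in $(z,w_j)$ with the remaining coordinates as parameters (the parameter-dependent sets $T_{w'}$ that still arise are then glued via the $\mathcal{M}$-construction and the argument of Theorem \ref{local_thm_2} based on Theorem \ref{thm_Jarnicki_Pflug_new_version}, and a product-property lemma identifies the resulting envelope). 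Doing this for each of the $n$ coordinates yields an extension past a union of polydiscs each elongated to radius $r''\in(r'_0,r)$ in one direction; Chirka's theorem (Theorem \ref{Chirka_thm}) then pushes the extension to the envelope of holomorphy of that union, which by logarithmic convexity contains $\Delta_0^n(r''')$ with $r'''=({r'}^{n-1}r'')^{1/n}>r'_0$, contradicting maximality. If you want to keep your induction on $n$, you must first prove a parametrized version of the dimension-$(n-1)$ statement producing one relatively closed pluripolar singular set uniformly in $w_n$ --- which is essentially the content of Theorem \ref{new_Imomkulov_thm} itself, so the induction as proposed is circular at that point.
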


In other words, this proposition  says that some  local  extensions   are  possible.
\begin{proof}
Fix a point $a_0$ of $ A$ and let $r'_0$ be the supremum of all $r'\in(0,r)$ such that $\rho_{r'},$  $\epsilon_{r'}$,
and $T_{r'}$ exist with the above properties. Note that $1\leq r'_0\leq r$. It suffices to show that $r'_0=r$.

Suppose that $r'_0<r$. Fix $r'_0<r''<r$ and choose $r'\in(0,r'_0)$ such that $\root{n}\of{{r'}^{n-1}r''}>r'_0$. Let
$\rho:=\rho_{r'}$, $\epsilon:= \epsilon_{r'}$, and $S:=T_{r'}$.

Write $w=(w',w_n)\in\C^n=\C^{n-1}\times\C$. Let $C$ denote the set of all $(a,b')\in(A\cap\Delta_{a_0}(\rho))\times\Delta_0^{n-1}(r')$ such that the fiber $M_{(a,b',\cdot)}$ is polar. %We  may  shrink  $C$ a little. Namely,  we replace  $C$  by  $C_k\subset C$   such that %$C_k$  is closed, $C_k\subset C_{k+1}$ and $\mes_{2n-1}(C\setminus \bigcup_{k} C_k)=0.$ %Here  $  \mes_{2n-1}$ denotes the $(2n-1)$-dimensional Lebesgue  measure. %Therefore, we may  assume  without loss of generality that $C$ is  closed. Let
\begin{equation*}
F:=\left\lbrace  w^{'}\in \Delta_0^{n-1}(r'):\ \mes\big(\{  z\in A: (z,w^{'})\not\in C\}\big)  >0
\right\rbrace,
\end{equation*}
where  $\mes$  denotes the linear measure  on $\partial E.$ For every  $w^{'}\in  \Delta_0^{n-1}(r')\setminus F,$  we
have  that $\mes(A\setminus  C_{w^{'}})=0,$ where $ C_{w^{'}}:=\{z\in A:\  (z,w^{'})\in C\}.$ Applying  Remark
\ref{remark_Imomkulov_thm}
 to  the  function  $f(\cdot,w^{'},\cdot)$  restricted  to the cross
\begin{equation*}
Y_{w^{'}}:=\X\big(C_{w^{'}},\Delta_0(r');\Delta_{a_0}(\rho,\epsilon),\Delta_0(r)\big)\setminus (M\cup S)
\end{equation*}
 we conclude that there exists a closed pluripolar
set $T_{w'}\subset\widehat{Y_{w'}}$  such that $T_{w^{'}}\cap \Delta_{a_0}(\rho,\epsilon)\times
\Delta_0(r^{'})\subset  S$ and every  function $f$ satisfying the hypotheses of Theorem \ref{new_Imomkulov_thm}
extends holomorphically to a function $\hat{f}_{w^{'}}$ defined on
 $\widehat{ Y}_{w^{'}}\setminus T_{w^{'}}$.

Next, we will argue  as in the  proof of Theorem \ref{local_thm_2}. More precisely, for all $0<\delta<\frac{1}{2}$
let
\begin{eqnarray*}
 A_{\delta}&:=&\left\lbrace (z,w^{'})\in \Delta_{a_0}(\rho,\epsilon)\times \Delta_0^{n-1}(r^{'}):\
 \omega\big((z,w^{'}),C,  \Delta_{a_0}(\rho,\epsilon)\times \Delta_0^{n-1}(r^{'})  \big )<\delta\right\rbrace,\\
  G_{\delta}&:=&\left\lbrace w_n\in \Delta_0(r):\  \omega(w_n,\Delta_0(r^{'}),\Delta_0(r))<1-\delta\right\rbrace.
\end{eqnarray*}
We define  a   set $\mathcal{M}$ as  follows:
\begin{equation*}
 \mathcal{M}(z,w^{'}):=
\begin{cases}
  (T_{w^{'}})_z:=\{w_n\in \Delta_0(r):\  (z,w_n)\in T_{w^{'}}\}
  & \qquad w^{'}\in   \Delta_0^{n-1}(r^{'})\setminus F, \\
        \Delta_0(r)   &   \qquad w^{'}\in F       .
\end{cases}
\end{equation*}
Moreover,  we define a new function $\tilde{f}_{\delta}$ on
$$\X\left(A_{\delta},\Delta_0(r^{'}); \Delta_{a_0}(\rho,\epsilon)\times \Delta_0^{n-1}(r^{'})     ,G_{\delta}
 \right)\setminus \mathcal{M}$$  as follows
\begin{equation*}
 \tilde{f}_{\delta}(z,w):=
\begin{cases}
 \hat{f}(z,w)
  & \qquad (z,w)\in  (A_{\delta}\times G_{\delta})\setminus \mathcal{M}, \\
  f(z,w) &   \qquad (z,w)\in  \big( \Delta_{a_0}(\rho,\epsilon)\times \Delta_0^{n-1}(r^{'})  \times
  \Delta_0(r^{'})\big)\setminus  \mathcal{M} .
\end{cases}
\end{equation*}
  Using the hypotheses on $f$ and the previous  paragraph, we see  that
  \begin{equation*}
 \tilde{f}_{\delta}\in \Oc_s\Big(
\X\left(  A_{\delta},\Delta_0(r^{'}); \Delta_{a_0}(\rho,\epsilon)\times \Delta_0^{n-1}(r^{'})     ,G_{\delta}
 \right)\setminus \mathcal{M},\C\Big).
\end{equation*}
 Arguing as in the proof of Theorem  \ref{local_thm_2} we can show that there  exists a
relatively closed  pluripolar  subset $T_n$ of $\widehat{Y_n}$  such that  every function $f$ as in the hypothesis
extends holomorphically to a  function $f_n$  defined on   $\widehat{Y_n}\setminus  T_n,$  where
\begin{equation*}
Y_n:=\X\big(C,\Delta_0(r');\Delta_{a_0}(\rho,\epsilon)\times \Delta_0^{n-1}(r^{'}),\Delta_0(r)\big).
\end{equation*}
 In order  to  calculate  $\widehat{Y_n}$ we need the following
 lemma.
\begin{lemma}
For  $(z,w')\in\Delta_{a_0}(\rho,\epsilon)\times\Delta_0^{n-1}(r'),$
\begin{multline*}
\omega\big((z,w'), C,\Delta_{a_0}(\rho,\epsilon)\times\Delta_0^{n-1}(r')\big)=\\ \max\{ \frac{1}{\epsilon}\cdot
\omega\big(z, A\cap\Delta_{a_0}(\rho),\Delta_{a_0}(\rho)\big),\; \omega(w', \Delta_0^{n-1}(r'),\Delta_0^{n-1}(r')\}.
\end{multline*}
\end{lemma}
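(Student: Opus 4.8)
The plan is to reduce the asserted identity to two facts about relative extremal functions and then prove each. Write $A_1:=A\cap\Delta_{a_0}(\rho)$, let $D:=\Delta_{a_0}(\rho)\cap E$ be the domain appearing in the definition of $\Delta_{a_0}(\rho,\epsilon)$, and set $D_1:=\Delta_{a_0}(\rho,\epsilon)=\{z\in D:\ \omega(z,A_1,D)<\epsilon\}$ and $D_2:=\Delta_0^{n-1}(r')$, so that the domain in question is $\Omega:=D_1\times D_2$. Since the set in the second entry of the maximum coincides with the domain over which its extremal function is formed, $\omega(w',D_2,D_2)=0$; as the first entry is nonnegative, it suffices to prove
\[
\omega\big((z,w'),C,\Omega\big)=\tfrac{1}{\epsilon}\,\omega\big(z,A_1,D\big),\qquad (z,w')\in\Omega.
\]
I would deduce this from two steps: (a) the rescaling identity $\omega(\cdot,A_1,D_1)=\tfrac1\epsilon\,\omega(\cdot,A_1,D)$ on $D_1$; and (b) the assertion that $\omega(\cdot,C,\Omega)$ does not depend on $w'$ and equals $\omega(z,A_1,D_1)$.

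For step (a), put $v:=\omega(\cdot,A_1,D)$. The inequality $\omega(\cdot,A_1,D_1)\ge\tfrac1\epsilon v$ is immediate, since $\tfrac1\epsilon v$ is plurisubharmonic on $D_1$, is $<1$ there (because $v<\epsilon$ on $D_1$), and, using $\omega(\cdot,A_1,D)=\omega(\cdot,A_1^\ast,D)$ together with the fact that $\mes$-density points of $A_1$ are points of angular pluriregularity, satisfies $\Alimsup(\tfrac1\epsilon v)\le 0$ on $A_1$; hence it is an admissible competitor. For the reverse inequality I take an arbitrary competitor $u$ for $\omega(\cdot,A_1,D_1)$ and glue: the function equal to $\max\{\epsilon u,v\}$ on $D_1$ and to $v$ on $D\setminus D_1$ is plurisubharmonic on $D$ by the standard gluing lemma, because on $\partial D_1\cap D=\{v=\epsilon\}$ one has $\limsup_{D_1\ni z\to\zeta}\epsilon u(z)\le\epsilon=v(\zeta)$. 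This glued function is $\le 1$ and has $\Alimsup\le 0$ on $A_1$, so it is $\le v$ on $D$; on $D_1$ this forces $\epsilon u\le v$, that is $u\le\tfrac1\epsilon v$. Taking the supremum over $u$ yields (a).

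For step (b), the lower bound $\omega(\cdot,C,\Omega)\ge\omega(z,A_1,D_1)$ follows by pulling $\omega(\cdot,A_1,D_1)$ back through the projection $(z,w')\mapsto z$: the result is plurisubharmonic on $\Omega$, bounded by $1$, and has vanishing $\Alimsup$ on $C$, whose first coordinate lies in $A_1$. For the upper bound I slice. Let $u$ be a competitor for $\omega(\cdot,C,\Omega)$ and fix $w'\in D_2\setminus F$, where $F$ is the exceptional set introduced above; for such $w'$ the fiber $C_{w'}$ has full $\mes$-measure in $A_1$. Then $u(\cdot,w')$ is plurisubharmonic on $D_1$, is $\le 1$, and has $\Alimsup\le 0$ on $C_{w'}$, so it is a competitor for $\omega(\cdot,C_{w'},D_1)$. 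Since the angular relative extremal function of a measurable subset of $\partial E$ is unchanged under a Lebesgue-null modification, $\omega(\cdot,C_{w'},D_1)=\omega(\cdot,A_1,D_1)$, giving $u(\cdot,w')\le\omega(\cdot,A_1,D_1)$ for every $w'\in D_2\setminus F$. Finally, for fixed $z$ the map $w'\mapsto u(z,w')-\omega(z,A_1,D_1)$ is plurisubharmonic on $D_2$ and $\le 0$ off the pluripolar set $F$; its positivity set would be a nonempty open, hence non-pluripolar, subset of $F$, so it is empty. Thus $u\le\omega(z,A_1,D_1)$ on all of $\Omega$, and taking the supremum over $u$ completes (b). Combining (a) and (b) gives the lemma.

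The delicate point is the upper bound in (b), which rests on two inputs. First, the exceptional set $F$ must be genuinely pluripolar; this is a Fubini-type theorem for pluripolar sets applied to the fibration $w'\mapsto M_{(\cdot,w',\cdot)}$ together with the hypothesis that each $M_a$ is pluripolar. Second, the angular relative extremal function of a positive-measure arc in $\partial E$ must ignore Lebesgue-null changes, which is precisely where the passage to density points and the use of the Stolz approach regions enter. The rescaling step (a) is clean but must likewise be carried out with the $A^\ast$-machinery, so that the competitor conditions on $A_1$ are tested only at its pluriregular (density) points.
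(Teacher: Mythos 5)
Your overall strategy is sound and it genuinely differs from the paper's. The paper's proof is a three-line reduction: it invokes Proposition 5.2 of \cite{nv2} to normalize $\epsilon=1$ (your step (a), which you instead prove from scratch by the gluing/competitor argument --- correctly), then replaces $C$ by the full product $(A\cap\Delta_{a_0}(\rho))\times\Delta_0^{n-1}(r')$ on the grounds that the difference has $(2n-1)$-dimensional Lebesgue measure zero, and finally cites the \emph{product property} of relative extremal functions. You avoid the general product property altogether by observing that the second entry of the maximum is identically $0$ (the set equals the domain), which reduces the product property to the degenerate case provable by projection (lower bound) and slicing (upper bound); and you handle the defect $C\neq A_1\times D_2$ fiberwise in the slicing step rather than by the $(2n-1)$-dimensional measure-zero substitution. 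This is arguably more self-contained than the paper's proof, and it isolates exactly which null-set-invariance statement for the angular extremal function is being used.

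Two justifications in your step (b) are wrong as written, though both are repairable. First, you assert that $F$ is pluripolar. What actually follows from the hypotheses is weaker: each fiber $N_a:=\{b':\ M_{(a,b',\cdot)}\ \text{is not polar}\}$ is pluripolar, hence Lebesgue-null in $\C^{n-1}$, so by Fubini the set $(A_1\times\Delta_0^{n-1}(r'))\setminus C$ has $(\mes\otimes\lambda_{2n-2})$-measure zero, and therefore $F$ has Lebesgue measure zero --- but measure zero does not give pluripolarity, and no Fubini-type theorem for pluripolar sets applies here since the ``base'' variable $a$ ranges over a boundary set. Second, your extension of $u(z,\cdot)\le\omega(z,A_1,D_1)$ from $D_2\setminus F$ to $D_2$ via ``the positivity set would be a nonempty open subset of $F$'' fails: the strict superlevel set of an upper semicontinuous function is not open (upper semicontinuity controls it from the wrong side). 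Both defects disappear simultaneously if you argue instead by the sub-mean value inequality: $w'\mapsto u(z,w')-\omega(z,A_1,D_1)$ is plurisubharmonic and $\le 0$ off the Lebesgue-null set $F$, hence its value at any point is dominated by its average over a small ball, which is $\le 0$. With that one-line replacement the proof is complete.
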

\begin{proof}
Using Proposition 5.2  in \cite{nv2}  we may assume  without loss of generality that $\epsilon=1.$  Observe  that the
$(2n-1)$-dimensional Lebesgue  measure  of
\begin{equation*}
 \big ( (A\cap\Delta_{a_0}(\rho))\times  \Delta_0^{n-1}(r')\big)\setminus C
\end{equation*}
is  zero  and that   the set  $ (A\cap\Delta_{a_0}(\rho))\times \Delta_0^{n-1}(r')$ is  living  on the boundary  of
the  smooth hypersurface  $\partial E\times  \Delta_0^{n-1}(r')$ in $\C^n.$ Consequently,  in the  desired   equality
we  may  suppose that $C=(A\cap\Delta_{a_0}(\rho))\times  \Delta_0^{n-1}(r').$ Then the
equality  follows easily from the  product  property for the extremal  function.
 \end{proof}

We come back to the proof of the proposition. Using the above  lemma,  we get
\begin{align*}
\widehat{ Y_n}&=\{(z,w',w_n)\in\Delta_{a_0}(\rho,\epsilon)\times\Delta_0^{n-1}(r') \times\Delta_0(r):\\ &
\omega\big((z,w'), C,\Delta_{a_0}(\rho,\epsilon)\times\Delta_0^{n-1}(r')\big)+
\omega\big(w_n,\Delta_0(r'),\Delta_0(r)\big)<1\}\\
&=\{(z,w',w_n)\in\Delta_{a_0}(\rho)\times\Delta_0^{n-1}(r')\times\Delta_0(r):\\ &  \omega\big(  (z,w'),
(A\cap\Delta_{a_0}(\rho))\times\Delta_0^{n-1}(r'), \Delta_{a_0}(\rho,\epsilon)\times\Delta_0^{n-1}(r') \big)+
\omega(w_n,\Delta_0(r'),\Delta_0(r)\big)<1\}\\
&=\{(z,w',w_n)\in\Delta_{a_0}(\rho,\epsilon)\times\Delta_0^{n-1}(r')\times\Delta_0(r):\\ &\quad \max\{
\frac{1}{\epsilon}\cdot  \omega\big(z, A\cap\Delta_{a_0}(\rho),\Delta_{a_0}(\rho)\big),\; \omega(w',
\Delta_0^{n-1}(r'),\Delta_0^{n-1}(r'))\}
\\&+\omega(w_n,\Delta_0(r'),\Delta_0(r))<1\}\\
&=\{(z,w',w_n)\in\Delta_{a_0}(\rho)\times\Delta_0^{n-1}(r')\times\Delta_0(r):\\ &  \frac{\omega(z,
A\cap\Delta_{a_0}(\rho),\Delta_{a_0}(\rho)\cap E)}{\epsilon}+ \omega(w_n,\Delta_0(r'),\Delta_0(r))<1\}.
\end{align*}

Since $r''<r$, we may find an $\rho_n\in(0,\rho]$ such that every function $f$  as in the hypotheses of Theorem
\ref{new_Imomkulov_thm}   extends holomorphically to a function $\widehat{ f}_n$ defined on
$\Delta_{a_0}(\rho_n)\times\Delta_0^{n-1}(r')\times\Delta_0(r'')\setminus T_n$. We may assume that $T_n$ is singular
with respect to the family $\{\widehat{ f}_n:\  f\  \text{as in  the hypotheses of  Theorem
\ref{new_Imomkulov_thm}}\}$.

Repeating the above argument for the coordinates $w_\nu$, $\nu=1,\dots,n-1$, and gluing the obtained sets, we find an
$\rho_0\in(0,\rho],$ $\epsilon_0\in (0,\epsilon]$ and a relatively closed pluripolar set $T_0:=\bigcup_{j=1}^nT_j$
such that every function $f$ as in  the hypotheses of  Theorem  \ref{new_Imomkulov_thm}  extends holomorphically to a
function $\widehat{ f}_0:=\bigcup_{j=1}^n\widehat{ f}_j$ holomorphic in
$\Delta_{a_0}(r_0,\epsilon_0)\times\Omega\setminus T_0$, where
\begin{equation*}
\Omega:=\bigcup_{j=1}^n \Delta_0^{j-1}(r')\times\Delta_0(r'')\times\Delta_0^{n-j}(r').
\end{equation*}
Let $\widehat{\Omega}$ denote the envelope of holomorphy of $\Omega$. Applying Theorem \ref{Chirka_thm}, we find a
relatively closed pluripolar subset $T$ of $\Delta_{a_0}(\rho_0,\epsilon_0)\times\widehat{\Omega}$ such that every
function $f$ as in  the hypotheses of  Theorem  \ref{new_Imomkulov_thm}  extends to a function $\widehat{ f}$
holomorphic on $\big (\Delta_{a_0}(\rho_0,\epsilon_0)\times\widehat{\Omega}\big)\setminus T$. Let
$r''':=\root{n}\of{{r'}^{n-1}r''}$. Observe that $\Delta_0(r''')\subset\widehat{\Omega}$. Recall that $r'''>r'_0$. We
may assume that $T$ is singular with respect to the family $\{\widehat{f}: \ f\   \text{as in  the hypotheses of
Theorem  \ref{new_Imomkulov_thm} }   \}$. Hence,  the proof is  finished.
\end{proof}

Now  we are in the position  to show that   Theorem \ref{local_thm_1} for $n=1$  implies  Theorem
\ref{new_Imomkulov_thm}.

\smallskip

\noindent{\bf  Proof of Theorem    \ref{new_Imomkulov_thm}.}  Suppose  without loss of generality that  all points of
$A$ are density points of $A.$ Using  a classical exhaustion  argument it suffices   to prove the following

\noindent{\bf Assertion.} {\it For every compact  set $A_0\subset A$ and  every $r^{'}\in (1,r),$ there  exist
$0<\rho=\rho_{r^{'}}$  and   a   relatively closed  pluripolar set $T=T_{r^{'}}\subset \widehat{\X}\big(A_0,E^n;E,
\Delta_0^n(r^{'})\big)$   such that every function $f$ satisfying   the hypotheses of Theorem
 \ref{new_Imomkulov_thm} extends holomorphically to  $ \widehat{\X}\big(A_0,E^n;E,   \Delta_0^n(r^{'})\big)
\setminus T.$ }

Now  fix a compact  set $A_0\subset A$ and  an $r^{'}\in (1,r).$ Applying  Proposition \ref{prop_local_extension} to
all points  of $A_0$ and  using the compactness of $A_0,$ we may find    $k$ points $a_1,\ldots, a_k\subset A$  and
$2k$  numbers $0<\rho_1, \epsilon_1,\ldots,\rho_k,\epsilon_k<1$ and  a   relatively closed  pluripolar set
$T^{'}\subset\Omega \times  \Delta_0^n(r^{'})$  with   $\Omega:= \bigcup^{k}_{j=1}\Delta_{a_j}(\rho_j,\epsilon_j)$
such that
\begin{itemize}
\item[$\bullet$] $A_0\subset \bigcup^{k}_{j=1}\Delta_{a_j}(\rho_j)$ \item[$\bullet$]
 every function $f$ satisfying   the hypotheses of Theorem
 \ref{new_Imomkulov_thm} extends holomorphically to  $\big (\Omega\times  \Delta_0^n(r^{'})\big)
\setminus T^{'}.$
\end{itemize}
We are able to define a new function $\tilde{f}$ on $\X\left(\Omega,E^n;E,\Delta_0^n(r^{'})
 \right)\setminus  T^{'}$ as follows
\begin{equation}\label{eq_Step1_formula_tildefdelta}
 \tilde{f}_{\delta}(z,w):=
\begin{cases}
 \hat{f}(z,w)
  & \qquad (z,w)\in  (\Omega \times \Delta_0^n(r^{'}))\setminus T^{'}, \\
  f(z,w) &   \qquad (z,w)\in (E\times E^n)        .
\end{cases}
\end{equation}
  Using the hypotheses on $f$ and the previous  argument, we see  that
  $$\tilde{f}_{\delta}\in \Oc_s\Big(
\X\left(\Omega, E^n;E, \Delta_0^n(r^{'})
 \right)\setminus T^{'},\C\Big).$$
   Consequently,
 $\tilde{f}$ satisfies the hypotheses of Theorem
 \ref{thm_Jarnicki_Pflug_new_version}.
 Applying  this theorem  yields a  relatively closed  pluripolar   subset $T$  of
$\widehat{\X}\left(\Omega, E^n; E,  \Delta_0^n(r^{'})      \right)$ with   $T\cap (E\times E^n)=\varnothing $ and a
function  $\hat{f}\in
 \Oc\Big(\widehat{\X}\left(   \Omega, E^n;E, \Delta_0^n(r^{'})
 \right)\setminus  T ,\C \Big)$
 such that  $\hat{f}=f$ on $E\times E^n .$
Using the  above-listed properties of $a_1,\ldots,a_k,$ we see that
 \begin{equation*}
\widehat{\X}\big(A_0,E^n;E,   \Delta_0^n(r^{'})\big)\subset \widehat{\X}\left(\Omega, E^n;E,  \Delta_0^n(r^{'})
\right).
\end{equation*}
This  proves the above  assertion, and thereby completes  the theorem.
 \hfill
$\square$

\smallskip

\section{Using holomorphic discs}
\label{Section_holomorphic_discs}

In this  section we combine  Poletsky's theory of discs \cite{po1,po2}, Rosay's Theorem on holomorphic discs \cite{ro}
and Theorem \ref{thm_Jarnicki_Pflug}.

Let us  recall some facts from Poletsky's theory of discs. For a complex manifold $\mathcal{M},$ let
$\mathcal{O}(\overline{E},\mathcal{M})$ denote the set of all holomorphic mappings $\phi:\ E\longrightarrow
\mathcal{M}$ which extend holomorphically  to   a neighborhood of  $\overline{E}.$ Such a mapping $\phi$ is called a
{\it holomorphic disc} on $\mathcal{M}.$ Moreover, for a subset $A$ of $\mathcal{M},$ let
\begin{equation*}
 1_{  A}(z):=
\begin{cases}
1,
  &z\in   A,\\
 0, & z\in \mathcal{M}\setminus A.
\end{cases}
\end{equation*}

In the work \cite{ro}  Rosay proved the following  result.
\begin{theorem}\label{Rosaythm}
Let $u$ be an upper semicontinuous function on a complex manifold $\mathcal{M}.$ Then the Poisson functional of $u$
defined by
\begin{equation*}
\mathcal{P}[u](z):=\inf\left\lbrace\frac{1}{2\pi}\int\limits_{0}^{2\pi} u(\phi(e^{i\theta}))d\theta:  \ \phi\in
\mathcal{O}(\overline{E},\mathcal{M}), \ \phi(0)=z \right\rbrace,
\end{equation*}
is plurisubharmonic on $\mathcal{M}.$
\end{theorem}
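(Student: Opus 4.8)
The plan is to verify that $v:=\mathcal{P}[u]$ is upper semicontinuous and that its restriction to every complex line is subharmonic; these two facts together are the definition of plurisubharmonicity. Since subharmonicity of a u.s.c. function of one complex variable is equivalent to the sub-mean value inequality at the centre of every small disc, it suffices to prove, for every \emph{affine} disc (and more generally for every $\psi\in\mathcal{O}(\overline{E},\mathcal{M})$), the inequality $v(\psi(0))\le\frac{1}{2\pi}\int_0^{2\pi}v(\psi(e^{i\theta}))\,d\theta$. I would first reduce to bounded $u$: with $u_j:=\max(u,-j)$ one has $u_j\downarrow u$, and interchanging the two infima shows $\mathcal{P}[u_j]\downarrow\mathcal{P}[u]$ pointwise (disc by disc this is monotone convergence); as a decreasing limit of plurisubharmonic functions is plurisubharmonic, it is enough to treat bounded u.s.c.\ $u$, for which all boundary integrals are finite and the reverse Fatou lemma applies.

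For upper semicontinuity at $z_0$, fix $c>v(z_0)$ and a near-optimal disc $\phi_0$ with $\phi_0(0)=z_0$ and $\frac{1}{2\pi}\int_0^{2\pi}u(\phi_0(e^{i\theta}))\,d\theta<c$. The key is to move the centre to a nearby $z$ while changing the disc uniformly little on all of $\overline{E}$. Fixing a Hermitian metric with exponential map $\exp$ and noting that the holomorphic bundle $\phi_0^{\ast}T^{1,0}\mathcal{M}$ is trivial over $\overline{E}$ (which has a Stein contractible neighbourhood), a perturbation is a map $s\in\mathcal{O}(\overline{E},\C^{n})$, $n:=\dim_{\C}\mathcal{M}$, in a fixed trivialization. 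I would take $s(\zeta)=\tilde v\,(1-a\zeta)^{-1}$ with $a\in(0,1)$ and $1-a=\sqrt{|\tilde v|}$, so that $s(0)=\tilde v$ while $\|s\|_{\overline{E}}\le|\tilde v|/(1-a)=\sqrt{|\tilde v|}\to0$. Then $\phi_z:=\exp_{\phi_0(\cdot)}(s)$ is a holomorphic disc with centre $\exp_{z_0}(\tilde v)=z$ that tends to $\phi_0$ uniformly on $\overline{E}$ as $z\to z_0$. Since these discs stay in a fixed compact set, where $u$ is bounded above, the reverse Fatou lemma and the upper semicontinuity of $u$ give $\limsup_{z\to z_0}v(z)\le\limsup_{z\to z_0}\frac{1}{2\pi}\int_0^{2\pi}u(\phi_z(e^{i\theta}))\,d\theta\le\frac{1}{2\pi}\int_0^{2\pi}u(\phi_0(e^{i\theta}))\,d\theta<c$, and letting $c\downarrow v(z_0)$ proves u.s.c.

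The heart of the proof, and the step I expect to be the main obstacle, is the sub-mean value inequality, which demands a \emph{gluing of discs}. Fix $\psi$ with $\psi(0)=z$ and $\epsilon>0$. For each boundary angle $\theta$ choose, by definition of $\mathcal{P}[u]$, a disc $\phi_\theta$ with $\phi_\theta(0)=\psi(e^{i\theta})$ and $\frac{1}{2\pi}\int_0^{2\pi}u(\phi_\theta(e^{i\tau}))\,d\tau<v(\psi(e^{i\theta}))+\epsilon$; after a Lusin/discretization step the family may be taken continuous and then replaced by finitely many discs $\phi_1,\dots,\phi_N$ attached at sample points $\zeta_1,\dots,\zeta_N\in\partial E$, so arranged that the length-weighted average of the $\frac{1}{2\pi}\int_0^{2\pi}u(\phi_k(e^{i\tau}))\,d\tau$ approximates $\frac{1}{2\pi}\int_0^{2\pi}v(\psi(e^{i\theta}))\,d\theta$. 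One then builds a single disc $\Phi\in\mathcal{O}(\overline{E},\mathcal{M})$ with $\Phi(0)=z$ whose boundary follows $\psi$ for most of the circle but makes short excursions realizing the boundaries of the $\phi_k$, so that $\frac{1}{2\pi}\int_0^{2\pi}u(\Phi(e^{i\theta}))\,d\theta$ differs from that weighted average by $O(\epsilon)$. This fusion is the delicate point: on a general manifold one cannot add discs in global coordinates, and assembling $\psi$ and the $\phi_k$ into one honest holomorphic disc with controlled boundary integral is precisely the technical core of Poletsky's and Rosay's theory, resting on Mergelyan/Oka-type holomorphic approximation carried out in finitely many charts covering the compact set $\psi(\overline{E})\cup\bigcup_k\phi_k(\overline{E})$. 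Granting $\Phi$, the definition of $\mathcal{P}[u]$ gives $\mathcal{P}[u](z)\le\frac{1}{2\pi}\int_0^{2\pi}u(\Phi(e^{i\theta}))\,d\theta\le\frac{1}{2\pi}\int_0^{2\pi}v(\psi(e^{i\theta}))\,d\theta+O(\epsilon)$, and $\epsilon\to0$ yields the inequality.

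Applying the inequality of the previous paragraph to all affine discs in coordinate charts and their subdiscs shows that $v$ restricts to a subharmonic function on every complex line; together with the upper semicontinuity of the second paragraph this is plurisubharmonicity of $v=\mathcal{P}[u]$ for bounded $u$, and the truncation limit of the first paragraph removes the boundedness hypothesis. The only genuinely hard ingredient is the disc-gluing construction; the remaining steps are soft.
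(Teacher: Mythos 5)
The paper does not prove this statement: Theorem \ref{Rosaythm} is imported verbatim from Rosay \cite{ro} (the functional itself going back to Poletsky \cite{po1,po2}), so there is no internal argument to compare yours with. Judged on its own terms, your proposal correctly identifies the standard architecture of the known proof --- truncation to bounded $u$, upper semicontinuity by holomorphic perturbation of a near-optimal disc, and the sub-mean value inequality on affine discs via a gluing of discs --- but the third item, which you yourself call ``the technical core of Poletsky's and Rosay's theory,'' is precisely the content of the theorem, and you grant it rather than prove it. Constructing a single $\Phi\in\mathcal{O}(\overline{E},\mathcal{M})$ with $\Phi(0)=z$ whose boundary integral is within $O(\epsilon)$ of the weighted average of the $\phi_k$ requires, on a general manifold, either Rosay's arc-and-approximation construction or the L\'arusson--Sigurdsson disc-formula machinery; it is not a routine Mergelyan application, because the approximation must be performed on the non-smooth compact set obtained by attaching radial segments to $\overline{E}$ while keeping the image inside $\mathcal{M}$ and keeping the $\theta$-measure of the ``transition'' arcs small. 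As written, the proposal reduces the theorem to itself.

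There is also a concrete flaw in the upper semicontinuity step: the exponential map of a Hermitian metric is not holomorphic in the fiber variable, so $\zeta\mapsto\exp_{\phi_0(\zeta)}(s(\zeta))$ is in general \emph{not} a holomorphic disc. The correct device is a holomorphic spray: since $\phi_0$ extends to a neighborhood of $\overline{E}$ and $\phi_0^{\ast}T^{1,0}\mathcal{M}$ is holomorphically trivial there, one builds a holomorphic map $F(\zeta,w)$, with $w$ in a small ball of $\C^{n}$, such that $F(\zeta,0)=\phi_0(\zeta)$ and $F(\zeta,\cdot)$ is a local biholomorphism at $0$; then $\phi_z(\zeta):=F(\zeta,s(\zeta))$ with your choice of $s$ (small sup-norm, prescribed value at $0$) does what you want. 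This spray exists but its construction is itself a lemma (Royden-type), not a formality. Finally, a minor point: a decreasing limit of plurisubharmonic functions is plurisubharmonic or identically $-\infty$ on a component, so the truncation step should either rule out $\mathcal{P}[u]\equiv-\infty$ or adopt the convention that admits it.
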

This  implies the following important consequence (see, for example, Proposition 3.4 in \cite{nv1}).
\begin{corollary}\label{Rosaycor}
Let $\mathcal{M}$ be a complex manifold  equipped  with the canonical  system of approach regions and   $A$   a
nonempty open subset of $\mathcal{M}.$    Then
   $ \omega(z,A,\mathcal{M}) = \mathcal{P}[1_{\mathcal{M}\setminus A}](z),$ $z\in\mathcal{M}.$
\end{corollary}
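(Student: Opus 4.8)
The plan is to establish the two inequalities $\mathcal{P}[1_{\mathcal{M}\setminus A}]\le \omega(\cdot,A,\mathcal{M})$ and $\omega(\cdot,A,\mathcal{M})\le \mathcal{P}[1_{\mathcal{M}\setminus A}]$ separately, exploiting that $\mathcal{M}$ is a manifold without boundary, so that every point is interior and, by the interior formula recorded after Definition \ref{defi_approach_region}, the operation $\Alimsup$ reduces to ordinary upper semicontinuous regularization. Write $u_0:=1_{\mathcal{M}\setminus A}$. Since $A$ is open, $\mathcal{M}\setminus A$ is closed and $u_0$ is upper semicontinuous, so Theorem \ref{Rosaythm} applies and $\mathcal{P}[u_0]\in\mathcal{PSH}(\mathcal{M})$. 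I would first record two elementary facts: testing the defining infimum against the constant disc $\phi\equiv z$ gives $\mathcal{P}[u_0]\le u_0$ pointwise, whence $0\le \mathcal{P}[u_0]\le 1$ on $\mathcal{M}$ and $\mathcal{P}[u_0]=0$ on $A$; and since $\mathcal{P}[u_0]$ is plurisubharmonic (hence usc) with value $0$ on the open set $A$, one has $\Alimsup\mathcal{P}[u_0]\le 0$ on $A$.

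For the first inequality, I would observe that these facts make $\mathcal{P}[u_0]$ an admissible competitor in the supremum defining $h_{A,\mathcal{M}}$. Hence $\mathcal{P}[u_0]\le h_{A,\mathcal{M}}$, and since $\Alimsup v\ge v$ always, $h_{A,\mathcal{M}}\le \Alimsup h_{A,\mathcal{M}}=\omega(\cdot,A,\mathcal{M})$ by Definition \ref{defi_relative_extremal}, giving $\mathcal{P}[u_0]\le \omega(\cdot,A,\mathcal{M})$.

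For the reverse inequality, I would take an arbitrary competitor $u$ in the supremum defining $h_{A,\mathcal{M}}$, i.e. $u\in\mathcal{PSH}(\mathcal{M})$ with $u\le 1$ on $\mathcal{M}$ and $\Alimsup u\le 0$ on $A$. Because $A$ is open and $u$ is usc, the condition $\Alimsup u\le 0$ on $A$ collapses to $u\le 0$ on $A$; combined with $u\le 1$ on $\mathcal{M}\setminus A$ this yields the pointwise domination $u\le u_0$ on all of $\mathcal{M}$. Then for any disc $\phi\in\mathcal{O}(\overline{E},\mathcal{M})$ with $\phi(0)=z$, the sub-mean-value inequality for the subharmonic function $u\circ\phi$ gives $u(z)\le \frac{1}{2\pi}\int_0^{2\pi}u(\phi(e^{i\theta}))\,d\theta\le \frac{1}{2\pi}\int_0^{2\pi}u_0(\phi(e^{i\theta}))\,d\theta$; taking the infimum over all such discs yields $u(z)\le \mathcal{P}[u_0](z)$, and the supremum over competitors gives $h_{A,\mathcal{M}}\le \mathcal{P}[u_0]$. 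Since $\mathcal{P}[u_0]$ is usc, applying the (interior) $\Alimsup$ preserves this inequality, so $\omega(\cdot,A,\mathcal{M})=\Alimsup h_{A,\mathcal{M}}\le \mathcal{P}[u_0]$.

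Combining the two inequalities yields the asserted identity. The only genuinely non-trivial input is the plurisubharmonicity of $\mathcal{P}[u_0]$, which is exactly Theorem \ref{Rosaythm} and is granted; I expect the remaining difficulty to be purely a matter of bookkeeping, namely verifying that the $\mathcal{A}$-limsup conditions on the open interior set $A$ collapse to pointwise inequalities and that upper semicontinuous regularization is monotone and acts as the identity on the already-usc function $\mathcal{P}[u_0]$.
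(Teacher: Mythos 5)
Your proof is correct: both inequalities are verified cleanly, the key points (upper semicontinuity of $1_{\mathcal{M}\setminus A}$ because $A$ is open, admissibility of $\mathcal{P}[1_{\mathcal{M}\setminus A}]$ as a competitor via Theorem \ref{Rosaythm}, the collapse of the $\Alimsup$ condition to a pointwise one on the open set $A$, and the sub-mean-value inequality along discs) are all in place. The paper itself gives no proof here --- it only cites Proposition 3.4 of \cite{nv1} --- and your two-inequality argument is exactly the standard verification that reference supplies, so there is nothing to flag.
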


The main result of this  section is

\begin{theorem}\label{thm_discs}
  Let $X,\ Y$  be two complex manifolds,
  let $D\subset X,$ $ G\subset Y$ be two connected  open sets, let
  $A\subset D,$  $B\subset G$ be two non-empty open subsets.
  Let $M$ be  a  relatively closed subset of  $W:=\X(A,B;D,G)$  such that  $M$ is  locally
pluripolar in fibers over  $A$ and  over $B.$
  Then  there exists  a relatively closed locally pluripolar subset
 $\widehat{M}$ of $\widehat{W}$ such that $\widehat{M}\cap W\subset M$ and that
for  every mapping    $f\in \Oc_s(W\setminus M,Z),$
     there exists  a unique  mapping
$\hat{f}\in\Oc(\widehat{W}\setminus \widehat{M} ,Z)$ such that $\hat{f}=f$ on
  $ W\setminus M  .$
\end{theorem}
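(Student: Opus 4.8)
The plan is to prove the theorem by a local extension via holomorphic discs, followed by a gluing argument that keeps track of the pluripolar singularities that arise. Since $A\subset D$ and $B\subset G$ are open, $W=W^{\text{o}}=(D\times B)\cup(A\times G)$, and by Corollary \ref{Rosaycor} the relevant relative extremal functions are given by the Poisson functional, $\omega(z,A,D)=\mathcal{P}[1_{D\setminus A}](z)$ and $\omega(w,B,G)=\mathcal{P}[1_{G\setminus B}](w)$. Fix $(z_0,w_0)\in\widehat{W}$, so $\omega(z_0,A,D)+\omega(w_0,B,G)<1$, and pick $\epsilon>0$ with $\omega(z_0,A,D)+\omega(w_0,B,G)+2\epsilon<1$. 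By the definition of $\mathcal{P}[\cdot]$ there are nonconstant holomorphic discs $\phi\in\Oc(\overline{E},D)$, $\psi\in\Oc(\overline{E},G)$ with $\phi(0)=z_0$, $\psi(0)=w_0$ whose boundary integrals of $1_{D\setminus A}\circ\phi$ and $1_{G\setminus B}\circ\psi$ are below $\omega(z_0,A,D)+\epsilon$ and $\omega(w_0,B,G)+\epsilon$, respectively.

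Next I would transport the cross to $\C\times\C$ through $\Phi:=\phi\times\psi$. Setting $\alpha:=\phi^{-1}(A)\cap E$, $\beta:=\psi^{-1}(B)\cap E$ and $\Gamma:=\{\zeta\in\partial E:\ \phi(\zeta)\in A\}$, these $\alpha,\beta$ are non-empty open subsets of $E$ (non-empty since $A,B$ are open and $\mes(\Gamma)>0$), and $\Phi$ maps $\X(\alpha,\beta;E,E)$ into $W$. The pull-back $M_\Phi:=\Phi^{-1}(M)\cap\X(\alpha,\beta;E,E)$ is relatively closed, and for $s\in\alpha$ its fibre is $\psi^{-1}(M_{\phi(s)})$, which is polar because $M_{\phi(s)}$ is locally pluripolar and $\psi$ is nonconstant; symmetrically over $\beta$. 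Hence $g:=f\circ\Phi\in\Oc_s(\X(\alpha,\beta;E,E)\setminus M_\Phi,Z)$, and Theorem \ref{thm_Jarnicki_Pflug} (with $D=G=E$, $A=\alpha$, $B=\beta$) yields a relatively closed pluripolar $\widehat{M}_\Phi\subset\widehat{\X}(\alpha,\beta;E,E)$ with $\widehat{M}_\Phi\cap\X(\alpha,\beta;E,E)\subset M_\Phi$ (note $\widetilde{\X}(\alpha,\beta;E,E)=\X(\alpha,\beta;E,E)$ as $\alpha,\beta$ are open) and an extension $\hat{g}_\Phi\in\Oc(\widehat{\X}(\alpha,\beta;E,E)\setminus\widehat{M}_\Phi,Z)$ agreeing with $g$ off $M_\Phi$. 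The key estimate is that $(0,0)\in\widehat{\X}(\alpha,\beta;E,E)$: any competitor $u$ for $\omega(0,\alpha,E)$ is subharmonic, $\le 1$, with boundary upper limits $\le 0$ on $\Gamma$ (approached through the open set $\alpha$) and $\le 1$ off $\Gamma$, so comparison with the harmonic majorant of the upper semicontinuous datum $1_{\partial E\setminus\Gamma}$ gives $\omega(0,\alpha,E)\le 1-\frac{1}{2\pi}\mes(\Gamma)<\omega(z_0,A,D)+\epsilon$, and likewise for $\beta$; summing yields $\omega(0,\alpha,E)+\omega(0,\beta,E)<1$.

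I would then define $\hat{f}$ and the new singular set by the disc construction used in the proof of Theorem \ref{local_thm_2}: declare $(z_0,w_0)\notin\widehat{M}$ exactly when some admissible pair $(\phi,\psi)$ as above satisfies $(0,0)\notin\widehat{M}_\Phi$, and set $\hat{f}(z_0,w_0):=\hat{g}_\Phi(0,0)$ for such a pair. If $(z_0,w_0)\in W\setminus M$, then $(0,0)\in\X(\alpha,\beta;E,E)\setminus M_\Phi$, hence off $\widehat{M}_\Phi$, so $(z_0,w_0)\notin\widehat{M}$ and $\hat{f}(z_0,w_0)=g(0,0)=f(z_0,w_0)$; this gives both $\widehat{M}\cap W\subset M$ and $\hat{f}=f$ on $W\setminus M$. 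Uniqueness follows because $\widehat{W}$ is connected, $W^{\text{o}}\setminus M$ is a non-empty open subset, and a holomorphic $Z$-valued map on a connected manifold is determined by its values on a non-empty open set.

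The hard part will be the gluing: that $\hat{f}$ is well defined independently of the disc pair, that $\widehat{M}$ is relatively closed and locally pluripolar in $\widehat{W}$, and that $\hat{f}\in\Oc(\widehat{W}\setminus\widehat{M},Z)$. Well-definedness rests on the uniqueness clause of Theorem \ref{thm_Jarnicki_Pflug} and the identity principle: two extensions from different discs agree with $f$ on an accumulating family of cross slices and hence coincide near $(z_0,w_0)$. The relative closedness and local pluripolarity of $\widehat{M}$, and the holomorphy of $\hat{f}$, are obtained by letting the disc centre and defining data vary holomorphically and invoking the machinery already used for $M=\varnothing$ — notably Lemma~4.5 of \cite{nv2} for the fibrewise structure and Theorem \ref{Chirka_thm} to pass to envelopes of holomorphy — exactly as in the proof of Theorem \ref{local_thm_2}. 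In keeping with the paper's convention of emphasising the extension mechanism over the gluing, I would carry out the disc construction in full and then refer to \cite{nv2} for the assembly of $\widehat{M}$ and $\hat{f}$.
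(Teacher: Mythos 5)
Your overall skeleton agrees with the paper's: localize at a point $(z_0,w_0)\in\widehat{W}$, produce holomorphic discs $\phi,\psi$ through $z_0,w_0$ realizing the condition $\omega(z_0,A,D)+\omega(w_0,B,G)<1$ via Rosay's theorem and Corollary \ref{Rosaycor}, transport the data to a Euclidean cross, apply Theorem \ref{thm_Jarnicki_Pflug}, and glue; your estimate $\omega(0,\alpha,E)\le 1-\frac{1}{2\pi}\mes(\Gamma)$ is also correct. But there is a genuine gap at the central step: you pull the cross back through the pair of one-dimensional discs $(\phi,\psi)$ and extend $g=f\circ(\phi\times\psi)$ to $\hat g_\Phi$ on an open subset of $E\times E$. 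This produces only the single value $\hat g_\Phi(0,0)$ at $(z_0,w_0)$; the image $(\phi\times\psi)(E\times E)$ is a two-(complex-)dimensional subset of $D\times G$, of positive codimension as soon as $\dim D>1$ or $\dim G>1$, so nothing in your construction yields a holomorphic extension on an \emph{open} neighborhood of $(z_0,w_0)$ in $X\times Y$. The machinery you invoke to repair this (Lemma 4.5 of \cite{nv2} and the argument of Theorem \ref{local_thm_2}) does not apply here: in Theorem \ref{local_thm_2} the disc is used in only one of the two variables, so for each fixed $z_0$ one still obtains a genuine holomorphic function of $w$ on the fiber $(\widehat{W}_0)_{z_0}$, and joint holomorphy is then recovered by feeding these fiberwise extensions into the cross theorem with open first factor (Theorem \ref{thm_Jarnicki_Pflug_new_version}). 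When discs are used in both variables simultaneously there is no residual fiberwise holomorphy left to bootstrap from.

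The paper circumvents exactly this difficulty with Rosay's construction: instead of $\phi\times\psi$ itself, it builds \emph{surjective} holomorphic maps $\Phi\in\Oc(\widetilde{\mathcal{U}},\mathcal{U})$ and $\Psi\in\Oc(\widetilde{\mathcal{V}},\mathcal{V})$ from open subsets of $\C^{\mu}$, $\C^{\nu}$ onto full open neighborhoods $\mathcal{U}\supset\phi(\overline{E})$, $\mathcal{V}\supset\psi(\overline{E})$ in $D$, $G$ (via a Hartogs figure with gaps, a Stein neighborhood, and a holomorphic retraction). Theorem \ref{thm_Jarnicki_Pflug} is then applied upstairs to the cross $\X\big(\Phi^{-1}(A),\Psi^{-1}(B);\widetilde{\mathcal{U}},\widetilde{\mathcal{V}}\big)$, where the extension $\widehat{F}$ is automatically holomorphic on an open set; since $F$ is constant on the fibers of $(\Phi,\Psi)$, so is $\widehat{F}$ by the Uniqueness Principle, and the extension descends to a holomorphic map on an open subset of $\mathcal{U}\times\mathcal{V}$ after removing a locally pluripolar set that also absorbs the critical values of $(\Phi,\Psi)$. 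To complete your argument you would have to replace the one-dimensional pull-back by this surjection device (or by a fully worked-out theory of holomorphically parameterized families of discs in both variables); as it stands, the passage from the pointwise value $\hat g_\Phi(0,0)$ to a holomorphic $\hat f$ on an open set is not justified.
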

\begin{proof}
First shall prove the following  weaker version of  Theorem \ref{thm_discs}:

\noindent{\bf Assertion.} {\it For every $(z_0,w_0)\in\widehat{W},$ there  are a connected  open neighborhood $U\times V$ of
$(z_0,w_0)$ in $\widehat{W}$ and  a  relatively closed locally pluripolar  subset $S$ of $U\times V$  and a mapping
$\hat{f}\in\mathcal{O}\big((U\times V)\setminus S,Z\big)$  with $U\cap A\neq\varnothing\neq V\cap B$
such that $\hat{f}=f$ on $W\setminus (M\cup S).$}

Taking  for granted  this assertion, the theorem  follows immediately  from  a routine gluing process. Now  we  shall
present the proof of the assertion. Applying Theorem \ref{Rosaythm}  and Corollary \ref{Rosaycor}, we may find
$\phi\in \Oc(\overline{E},D)$ and $\psi\in \Oc(\overline{E},G)$
 such that
 \begin{equation}\label{eq_thm_discs_z0w0}
 \phi(0)=z_0,\quad \psi(0)=w_0,\qquad \frac{1}{2\pi}\Big(  \int\limits_{0}^{2\pi} 1_{D\setminus A}
 (\phi(e^{i\theta}))d\theta +  \int\limits_{0}^{2\pi} 1_{G\setminus B}
 (\psi(e^{i\theta}))d\theta \Big)<1.
 \end{equation}

Recall some ideas  in the   work of Rosay \cite[p. 166]{ro}.
 First we construct some kind of Hartogs figure $H$ with gaps which contains the image by a holomorphic embedding of
  a neighborhood $\{\phi(t):\ t\in\overline{E}\}$ in $D$   such that  $H$ is contained in a complex  manifold $D^{'}$
  spreading  over $\C^2\times D.$
 Composing with the projection $\Pi:\  \C^2\times D\rightarrow D,$ we get a natural map $\widetilde{\Pi}:\ D^{'}
 \rightarrow D.$ Let $\mathcal{U}^{'}$  be a Stein neighborhood of $H$ in $D^{'}.$ There exists  a proper
 holomorphic  embedding $\tau$ of $\mathcal{U}^{'}$ into $\C^{\mu}$ (for some $\mu$), and $\tau(\mathcal{U}^{'})$
 has  a neighborhood $\widetilde{\mathcal{U}}$ in $\C^{\mu}$ with a holomorphic  retract $r$ from
 $\widetilde{\mathcal{U}}$ onto
$\tau(\mathcal{U}^{'}).$ Consequently, setting $\Phi:=\widetilde{\Pi}\circ \tau^{-1}\circ r$ we obtain a surjective
mapping $\Phi\in\Oc(\widetilde{\mathcal{U}},\mathcal{U}),$ where $\mathcal{U}$ is a connected open neighborhood of
 $\{\phi(t):\ t\in\overline{E}\}$ in
$D.$  Analogously,
 we may find  a connected  open
 neighborhood $\mathcal{V}$ of $\{\psi(t):\
t\in\overline{E}\}$ in $G,$   an open subset $\widetilde{\mathcal{V}}$ in $\C^{\nu}$ (for some $\nu$)
 and  a surjective mapping
$\Psi\in\Oc(\widetilde{\mathcal{V}},\mathcal{V}).$

Next, consider the cross
\begin{equation*}
\mathcal{W}:=\X\big(\Phi^{-1}(A),\Psi^{-1}(B);\widetilde{\mathcal{U}},\widetilde{\mathcal{V}}\big)
\end{equation*}
and the set
\begin{equation*}
\mathcal{M}:=\left\lbrace (x,y)\in \widetilde{\mathcal{U}} \times \widetilde{\mathcal{V}}:\ (\Phi(x),\Psi(y))\in M
\right\rbrace.
\end{equation*}
Since  $\Phi$ and $\Psi$ are surjective, it is  clear that $\mathcal{M}$ is a relatively closed subset
of $\mathcal{W} $ and that $\mathcal{M}$ is locally pluripolar in
fibers over $\Phi^{-1}(A)$ and $\Psi^{-1}(B).$ Now  consider the mapping $F:\
\mathcal{W}\setminus\mathcal{M}\rightarrow Z$ defined by
\begin{equation*}
F(x,y):=f(\Phi(x),\Psi(y)),\qquad (x,y) \in  \mathcal{W}\setminus\mathcal{M}.
\end{equation*}
Using  the hypotheses of the theorem, we are able to apply Theorem \ref{thm_Jarnicki_Pflug} to $F.$ Consequently, we
obtain a relatively closed locally pluripolar subset $\widehat{\mathcal{M}}$ of  $\widehat{   \mathcal{W}}$ and a mapping
$\widehat{F}\in  \Oc(\widehat{   \mathcal{W}}\setminus\widehat{\mathcal{M}},Z)$  such that
$\mathcal{W}\cap \widehat{\mathcal{M}}\subset \mathcal{M} $ and $ \widehat{F}=F$ on $
\mathcal{W}\setminus \widehat{\mathcal{M}}.$

 Let $\widetilde{\mathcal{C}}$ be the set of critical points of $(x,y)\mapsto (\Phi(x),\Psi(y)).$ This is   a proper
analytic subset of $ \widetilde{\mathcal{U}} \times\widetilde{\mathcal{V}}$ since  $\Phi$ and $\Psi$ are surjective
(using Sard Theorem).
Now  define  the set
\begin{equation*}
\mathcal{C}:=\left\lbrace (\Phi(x),\Psi(y)):\ (x,y)\in  \widetilde{\mathcal{C}}   \right\rbrace.
\end{equation*}
It is not difficult to show that $\mathcal{C}$ is relatively closed   and is  contained in  a proper analytic subset of $\mathcal{U}\times\mathcal{V}.$

 Using the above formula for $F$ we  see that
 \begin{equation*}
 F(x,y)=F(x^{'},y^{'}),\qquad  \forall (x,y),(x^{'},y^{'})\in  \mathcal{W}\setminus\mathcal{M}:\
 \Phi(x)=\Phi(x^{'}),\ \Psi(y)=\Psi(y^{'}).
 \end{equation*}
 Consequently, using the Uniqueness Principle we can show that
  \begin{equation*}
 \widehat{F}(x,y)=\widehat{F}(x^{'},y^{'}),\qquad  \forall (x,y),(x^{'},y^{'})\in  \widehat{\mathcal{W}}\setminus\widehat{M}:\
 \Phi(x)=\Phi(x^{'}),\ \Psi(y)=\Psi(y^{'}).
 \end{equation*}
 Therefore, we can define the mapping
 \begin{equation*}
 \hat{f}(z,w):=\widehat{F}(\Phi^{-1}(z),\Psi^{-1}(w)),\qquad  (z,w)\in (\Phi,\Psi)(\widehat{\mathcal{W}}\setminus \widehat{\mathcal{M}}) \setminus S.
 \end{equation*}
 Since    $\Phi$ and $\Psi$  look like fibrations outside  $\mathcal{C},$  $\hat{f}$  is  holomorphic.
 Now  letting
 \begin{equation*}
 S:= \mathcal{C}\cup (\Phi,\Psi)(\widehat{\mathcal{M}}),
 \end{equation*}
 we see that  $S$ is  locally pluripolar in $\mathcal{U}\times\mathcal{V}.$
 Using (\ref{eq_thm_discs_z0w0})
we may choose  a  connected open neighborhood $U\times V$ of $(z_0,w_0)$ in
 $(\mathcal{U}\times\mathcal{V})\cap \widehat{W}$   with the required properties of the assertion.
  This completes the proof.
\end{proof}

The  following   result is  an immediate  consequence  of the above theorem.
\begin{corollary}\label{cor_local_thm_1}
Theorem \ref{local_thm_1}  still holds  in the following  general settings:
    $ Y$ is an arbitrary complex  manifold
and $G\subset Y$  is  an arbitrary domain
 and $B\subset G$ is an arbitrary  open subset.
\end{corollary}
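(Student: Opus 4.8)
The plan is to push the new $Y$--side generality back onto the Euclidean polydisc model already covered by Theorem \ref{local_thm_1}, using the holomorphic--disc machinery developed for Theorem \ref{thm_discs}. Since the asserted conclusion (the existence of $\widehat M$, the inclusions $W\cap\widetilde W\cap\widehat M\subset M$ and $(W\cap\widetilde W)\setminus M\subset\End(\widehat W\setminus\widehat M)$, and the $\mathcal A$--limit extension $\hat f$) is local, it suffices to produce, for each $(z_0,w_0)\in\widehat W$, a local holomorphic extension near $(z_0,w_0)$ together with a relatively closed locally pluripolar local singular set, and then to glue these by the Uniqueness Principle exactly as in the last step of the proof of Theorem \ref{thm_discs}. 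The $X$--side of the cross is unchanged from Theorem \ref{local_thm_1}, so the boundary set $A\subset\overline D$, the local boundedness along $(A\cap\partial D)\times G$, and the $\mathcal A$--limit behaviour there are handled verbatim by that theorem; the only genuinely new feature is that $B\subset G$ is now an arbitrary open subset of a domain $G$ in an arbitrary manifold $Y$.

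To deal with this I would fix $w_0\in G$ and run the Poletsky--Rosay construction from the proof of Theorem \ref{thm_discs} on the $Y$--side alone. Using Corollary \ref{Rosaycor}, choose a holomorphic disc $\psi\in\Oc(\overline E,G)$ with $\psi(0)=w_0$ and $\tfrac1{2\pi}\int_0^{2\pi}1_{G\setminus B}(\psi(e^{i\theta}))\,d\theta$ as close to $\omega(w_0,B,G)$ as desired, build the associated Hartogs figure with gaps, and obtain a bounded open $\widetilde{\mathcal V}\subset\C^\nu$ together with a surjective $\Psi\in\Oc(\widetilde{\mathcal V},\mathcal V)$ onto a neighbourhood $\mathcal V$ of $\{\psi(t):t\in\overline E\}$ in $G$, with $\Psi^{-1}(B)$ open. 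I would then pull the problem back through $(\id,\Psi)$: on the cross $\mathcal W:=\X(A,\Psi^{-1}(B);D,\widetilde{\mathcal V})$ set $F(z,y):=f(z,\Psi(y))$ and $\mathcal M:=\{(z,y):(z,\Psi(y))\in M\}$. Because $\Psi$ is an open surjection, $\mathcal M$ is relatively closed and locally pluripolar in fibers over $\Psi^{-1}(B)$, it avoids $(A\cup D)\times\Psi^{-1}(B)$, and $F$ inherits the separate continuity/holomorphy and the local boundedness along $(A\cap\partial D)\times\widetilde{\mathcal V}$; hence $F$ satisfies the hypotheses of Theorem \ref{local_thm_1} on the Euclidean $Y$--side. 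The passage from a general bounded open $(\widetilde{\mathcal V},\Psi^{-1}(B))$ to the literal polydisc model $G=\Delta^\nu_0(r)$, $B=E^\nu$ is obtained by covering $\widetilde{\mathcal V}$ with polydiscs and gluing via Theorem \ref{Chirka_thm}, precisely as in Proposition \ref{prop_local_extension}.

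Applying Theorem \ref{local_thm_1} to $F$ yields a relatively closed locally pluripolar $\widehat{\mathcal M}$ and $\widehat F\in\Oc(\widehat{\widetilde{\mathcal W}}\setminus\widehat{\mathcal M},\C)$ admitting the $\mathcal A$--limit $F$ along $\mathcal W\cap\widetilde{\mathcal W}$. I would then push $\widehat F$ forward. On $\mathcal W\setminus\mathcal M$ the value $F(z,y)$ depends only on $\Psi(y)$, so $\Psi(y)=\Psi(y')$ forces $\widehat F(z,y)=\widehat F(z,y')$ there; the Uniqueness Principle propagates this identity to all of $\widehat{\widetilde{\mathcal W}}\setminus\widehat{\mathcal M}$, and $\hat f(z,w):=\widehat F(z,\Psi^{-1}(w))$ becomes well defined. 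Since $\Psi$ is a submersion off its critical set $\mathcal C$ (a proper analytic set, by Sard), $\hat f$ is holomorphic on a neighbourhood of $(z_0,w_0)$ minus the relatively closed locally pluripolar set $\mathcal C\cup(\id,\Psi)(\widehat{\mathcal M})$, and it keeps the correct $\mathcal A$--limits because the canonical approach regions on $G$ are the $\Psi$--images of those on $\widetilde{\mathcal V}$. Gluing the local data over a countable cover of $\widehat W$ produces the global $\widehat M$ and $\hat f$.

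The hard part will be the bookkeeping that legitimises the gluing: one must verify that the locally constructed singular sets and extensions agree on overlaps (so that $\widehat M$ is globally well defined and $\hat f$ single valued), that the push-forward through the non-injective, critically degenerate map $\Psi$ introduces no spurious singularities beyond a locally pluripolar set, and that the $\mathcal A$--limit relation survives both the pull-back to $\widetilde{\mathcal V}$ and the push-forward to $G$. These are exactly the technical core of the proof of Theorem \ref{thm_discs}; the whole content of the corollary is that the identical mechanism goes through with Theorem \ref{local_thm_1} in the role of Theorem \ref{thm_Jarnicki_Pflug}, the now-untouched boundary side $A\subset\overline D$ being carried along for free.
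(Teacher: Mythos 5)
Your overall strategy --- run the Rosay--Poletsky disc construction on the $Y$-side to produce a surjection $\Psi\in\Oc(\widetilde{\mathcal V},\mathcal V)$ from a Euclidean open set, pull the cross back through $(\id,\Psi)$, apply a Euclidean cross theorem, and push forward --- is the same circle of ideas the paper uses; the paper's own proof is an equally terse three-step sketch (off-centre polydiscs, then open subsets of a polydisc via Theorem \ref{thm_discs} plus patching, then arbitrary $G\subset Y$ via parameterized families of discs, Lemma 3.2 of \cite{nv1}). The one structural difference is that you reach the manifold case through a single Rosay surjection $\Psi$, as in the proof of Theorem \ref{thm_discs}, rather than through parameterized disc families; that by itself is a legitimate variant.

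There is, however, a genuine gap at the point where you feed the pulled-back cross $\X\big(A,\Psi^{-1}(B);D,\widetilde{\mathcal V}\big)$ into Theorem \ref{local_thm_1}. That theorem is stated only for the rigid model $G=\Delta^n_0(r)$, $B=E^n$, whereas $\widetilde{\mathcal V}$ is an arbitrary bounded open subset of $\C^{\nu}$ and $\Psi^{-1}(B)$ an arbitrary open subset of it; so what you actually need at this step is precisely the Euclidean case of the corollary you are proving. Your proposed fix --- cover $\widetilde{\mathcal V}$ by polydiscs $\Delta_i$, choose sub-polydiscs $P_i\subset\Psi^{-1}(B)\cap\Delta_i$, and glue via Theorem \ref{Chirka_thm} as in Proposition \ref{prop_local_extension} --- only yields an extension on $\bigcup_i\widehat{\X}(A,P_i;D,\Delta_i)$ (minus a singular set), and this union is in general strictly smaller than $\widehat{\X}\big(A,\Psi^{-1}(B);D,\widetilde{\mathcal V}\big)$: by monotonicity $\omega(\cdot,P_i,\Delta_i)\geq\omega(\cdot,\Psi^{-1}(B),\widetilde{\mathcal V})$, usually strictly. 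The Chirka/envelope mechanism of Proposition \ref{prop_local_extension} works there only because of the special product geometry of the set $\Omega$; it does not compute the hull of a union of cross-hulls. To fill in the missing region one must re-run a cross theorem whose $X$-side set is the open sublevel set $A_\delta=\{\omega(\cdot,A,D)<\delta\}$ and whose $Y$-side set is the full open $\Psi^{-1}(B)$ --- that is, apply Theorem \ref{thm_discs} to $\X\big(A_\delta,\Psi^{-1}(B);D,\widetilde{\mathcal V}\big)$ and let $\delta\searrow 0$. This is exactly the paper's intermediate step (``apply Theorem \ref{thm_discs} in order to obtain local extensions, then \dots a routine patching process''), and without it your argument does not reach all of $\widehat{W}$.
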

\begin{proof}
Observe  that  the proof of Theorem \ref{local_thm_1} still works  if in the original hypothesis of the latter
theorem
 we only change
the following:   $B$  is  a polydisc, that is, $B$ is  not necessarily centered  at the center of the polydisc $G.$
The second step  will be   to require  that $B$ is   (only!) an open subset of the polydisc $G.$ For this  step we
should apply Theorem   \ref{thm_discs} in order to obtain  local  extensions. Then by a routine patching process, we
may  obtain  the global extension  from the local ones. The last step  will be  to require simply that  $B$ is an
open set  of an arbitrary  domain  $G.$ In fact, this  step is reduced  to the  second one by using  parameterized
families of holomorphic  discs  (see Lemma 3.2 in \cite{nv1}).
\end{proof}

\begin{corollary}\label{cor_local_thm_2}
Theorem \ref{local_thm_1}  still holds  in the following  general settings:
    $ Y$ is an arbitrary complex  manifold,
 $G\subset Y$  is  an arbitrary domain,
  $B\subset G$ is an arbitrary  open subset, $M$ is  not necessarily relatively closed in $W$  but
we require  instead  that  $A\subset D.$
\end{corollary}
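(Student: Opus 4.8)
The statement to prove is Corollary \ref{cor_local_thm_2}, which upgrades Corollary \ref{cor_local_thm_1} by additionally allowing $M$ to fail to be relatively closed in $W$, at the cost of requiring $A\subset D$. My plan is to reduce this to the already-established Corollary \ref{cor_local_thm_1} (equivalently, Theorem \ref{local_thm_1} in its general target $G,B$ form) by a localization-and-exhaustion argument that removes the singularities of $M$ in a controlled way. The point of the extra hypothesis $A\subset D$ is that it pushes us into the \emph{interior} situation where the machinery of Theorem \ref{thm_Jarnicki_Pflug_new_version} (its non-closed version) and Theorem \ref{thm_discs} are available without the boundary subtleties that forced $M$ to be closed in the original Theorem \ref{local_thm_1}.

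\textbf{First steps.} First I would invoke Proposition \ref{Nguyen_prop}(2) to reduce to a local situation: it suffices to produce, for each locally pluriregular $A_0$ with $\overline{A}_0\subset A$ (and $A_0$ compact) and for the analogous $B_0$, an extension on $\widehat{\X}(A_0,B_0;D,G)\setminus\widehat M$. Because now $A\subset D$, every such $A_0$ sits in the interior of $D$, so the approach regions play no role at the relevant points and the $\mathcal{A}$-limit degenerates to an honest limit on $W^{\mathrm o}$. Next, following the proof scheme of Corollary \ref{cor_local_thm_1}, I would use Theorem \ref{thm_discs} together with Poletsky-Rosay discs (Theorem \ref{Rosaythm}, Corollary \ref{Rosaycor}) to pull the cross situation back through surjective holomorphic maps $\Phi,\Psi$ and reduce to a cross over genuine open subsets of Euclidean spaces, where the singular set $\mathcal{M}:=\{(x,y):(\Phi(x),\Psi(y))\in M\}$ inherits the pluripolar-in-fibers property but need \emph{not} be relatively closed.

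\textbf{Handling the non-closed singularities.} The crucial mechanism is that the hypothesis $A\subset D$ lets me apply Theorem \ref{thm_Jarnicki_Pflug_new_version} instead of Theorem \ref{thm_Jarnicki_Pflug}: the former is precisely the version in which $M$ is only assumed relatively closed and pluripolar \emph{in fibers} over $A$ and $B$ (not relatively closed in all of $W$), and it produces pluripolar exceptional sets $P\subset A$, $Q\subset B$ together with a relatively closed pluripolar $\widehat M\subset\widehat W$ for which the extension agrees with $f$ on a full product $D_0\times G_0$. Concretely, I would run the pullback construction of Theorem \ref{thm_discs} to obtain a local extension $\widehat F$ of $F=f\circ(\Phi,\Psi)$ across the pulled-back cross, apply Theorem \ref{thm_Jarnicki_Pflug_new_version} to $\widehat F$ to absorb the non-closed part of $\mathcal{M}$ into a closed pluripolar set, push forward by $(\Phi,\Psi)$ outside the (analytic) critical locus $\mathcal{C}$ exactly as in Theorem \ref{thm_discs}, and finally glue the resulting local extensions by the Uniqueness Principle together with the exhaustion $\widehat W_0=\bigcup_{0<\delta<1}A_\delta\times G_\delta$ used in Theorem \ref{local_thm_2}.

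\textbf{The main obstacle.} The hard part will be the bookkeeping of the exceptional pluripolar sets $P,Q$ produced by Theorem \ref{thm_Jarnicki_Pflug_new_version} and ensuring that, after pushing forward through $(\Phi,\Psi)$ and gluing over a countable exhaustion $(\delta_k)$, they assemble into a single relatively closed locally pluripolar $\widehat M\subset\widehat W_0$ satisfying $\widehat M\cap(D\times B)=\varnothing$ and compatible with the $\mathcal{A}$-limit prescription; one must check that the local pieces $\widehat M_{\delta_k}$ can be taken singular with respect to \emph{all} admissible $f$ so that the gluing is canonical, just as was arranged in the proof of Theorem \ref{local_thm_2}. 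By contrast, the removal of the closedness assumption itself is essentially a black-box application of Theorem \ref{thm_Jarnicki_Pflug_new_version}, so the real work is verifying that the three-step localization of Corollary \ref{cor_local_thm_1} (polydisc $B$ centered off-center, then arbitrary open $B\subset G$, then arbitrary domain $G$ via parametrized disc families as in Lemma 3.2 of \cite{nv1}) survives verbatim once the interior hypothesis $A\subset D$ is imposed.
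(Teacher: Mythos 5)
Your proposal matches the paper's own (very terse) argument: reduce, exactly as in the proof of Corollary \ref{cor_local_thm_1}, to proving Theorem \ref{local_thm_1} in the case $A\subset D$ with $M$ not necessarily relatively closed, and then dispose of that base case by the Euclidean cross theorem with singularities. The paper cites Theorem \ref{thm_Jarnicki_Pflug} for this last step while you invoke Theorem \ref{thm_Jarnicki_Pflug_new_version}; since the former assumes $M$ relatively closed in $W$, your choice is the more accurate one, and the rest of your elaboration (discs, exceptional sets $P,Q$, gluing) is just the content of ``arguing as in Corollary \ref{cor_local_thm_1}''.
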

\begin{proof}
Arguing  as in the proof of Corollary \ref{cor_local_thm_1}, we only need to  prove
 Theorem \ref{local_thm_1} when $A\subset D$ and $M$ is   not necessarily relatively closed in $W.$
But this follows readily from Theorem  \ref{thm_Jarnicki_Pflug}.
\end{proof}

\section{Proof of the Main Theorem}

First  we  will prove the following local version of the  Main Theorem.

\begin{theorem}\label{local_thm}
We keep the hypotheses  and  notation of the Main Theorem.  Let  $A_0$ (resp. $B_0$) be  a subset of
 $\overline{ D}$ (resp. $\overline{ G}$) such that $A_0$ and  $B_0$ are locally pluriregular
    and that
$\overline{A}_0\subset A$ and $\overline{B}_0\subset B.$
 Then for every $(a,b)\in \overline{A_0}\times \overline{B_0},$ there exist an
open neighborhood $U$ of $a$ in $X,$  an open neighborhood $V$ of $b$ in $Y$,  and a relatively closed locally
pluripolar subset
 $\widehat{M}=\widehat{M}_{(a,b)}$ of $\widehat{\X}\big (A_0\cap U, B_0\cap V; D, G\cap V     \big)$ such that:
 \begin{itemize}
 \item[$\bullet$]
 $ \X\big (A_0\cap U, B_0\cap V; D, G \cap V   \big)\setminus M
 \subset \End\big(\widehat{\X}\big (A_0\cap U, B_0\cap V; D, G\cap V     \big)\setminus \widehat{M}\big);
 $
 \item[$\bullet$]
 $ \X\big (A_0\cap U, B_0\cap V; D, G \cap V   \big)\cap\widehat M\subset M;$
\item[$\bullet$]
 for  every mapping    $f:\ W\setminus M\longrightarrow Z$
   satisfying conditions (i)--(iii) of the Main Theorem,
     there exists  a unique  mapping
$\hat{f}\in\Oc\big( \widehat{\X}\big (A_0\cap U, B_0\cap V; D, G\cap V    \big)        \setminus \widehat{M}
,Z\big)$  such that $\widehat f$ admits the $\mathcal{A}$-limit
$f(\zeta,\eta)$ at every point
  $$(\zeta,\eta)\in   \X\big (A_0\cap U, B_0\cap V; D, G \cap V   \big)\setminus M  .$$
\end{itemize}
\end{theorem}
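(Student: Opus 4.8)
The plan is to establish the local extension by transporting the problem, via Poletsky's analytic discs (Theorem \ref{Poletsky}), to crosses over the unit disc where the one-dimensional results are available, and to handle the general target $Z$ by reduction to the case $Z=\C$. Except where a $Z$-valued statement is already in hand (Theorem \ref{thm_discs}), I would first prove the conclusion for $Z=\C$ and then lift it: produce a local $Z$-valued extension near the regular part by Theorem \ref{Nguyen_thm}, and pass from $\C$-valued to $Z$-valued extensions using Shiffman's characterization (Theorem \ref{thm_Shiffman}) together with Chirka's theorem (Theorem \ref{Chirka_thm}), exactly as in the proofs of Theorems \ref{thm_Jarnicki_Pflug} and \ref{thm_Jarnicki_Pflug_new_version}. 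I would then split the argument according to the position of $(a,b)$; the genuinely new content occurs when $a\in\partial D$ and $b\in\partial G$ simultaneously, whereas the remaining configurations reduce to results already established.

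First I would dispose of the cases in which at least one coordinate is interior. If $a\in D$ and $b\in G$, then after shrinking $U$ and $V$ the sets $A_0\cap U$, $B_0\cap V$ are relatively compact in $D$, $G$; local pluriregularity allows me to replace them by open sets through a disc reduction (Theorem \ref{thm_discs}), and the extension together with its locally pluripolar $\widehat M$ follows from Theorem \ref{thm_discs} itself (which already delivers a $Z$-valued extension). If exactly one coordinate lies on its boundary, say $a\in\partial D$ and $b\in G$ (the symmetric case being identical), then the second factor can be localized inside $G$, and I would invoke Corollary \ref{cor_local_thm_1}, which already permits the first factor $A_0$ to reach $\partial D$ while keeping the second factor an interior open subset of an arbitrary domain; the locally pluriregular $B_0\cap V$ is upgraded to an open set by one further disc reduction. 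In these cases conditions (ii)--(iii) are not needed, and the $\mathcal{A}$-limit assertion follows because the Stolz regions attached to the discs match $\mathcal{A}(D)$ and $\mathcal{A}(G)$.

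The main obstacle is the corner case $a\in\partial D$, $b\in\partial G$. Here I would apply Theorem \ref{Poletsky} simultaneously to the triplets $(z_0,A_0,D)$ and $(w_0,B_0,G)$ at each interior target point $(z_0,w_0)\in\widehat{\X}(A_0\cap U,B_0\cap V;D,G\cap V)$ with $\omega(z_0,A_0,D)+\omega(w_0,B_0,G)<1$, obtaining $\epsilon$-candidates $(\phi,\Gamma_1)$ and $(\psi,\Gamma_2)$ with $\Gamma_1,\Gamma_2\subset\partial E$ of positive measure, $\phi(\Gamma_1)\subset\overline{A_0}$, $\psi(\Gamma_2)\subset\overline{B_0}$, and (for $\epsilon$ small) $(0,0)\in\widehat{\X}(\Gamma_1,\Gamma_2;E,E)$. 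Pulling back $f$ yields $F(s,t):=f(\phi(s),\psi(t))$ on $\X(\Gamma_1,\Gamma_2;E,E)$ minus the pullback of $M$; the two hypotheses $M\cap\big((A\cap\partial D)\times B\big)=M\cap\big(A\times(B\cap\partial G)\big)=\varnothing$ guarantee that the pulled-back singularities are polar in fibers over $\Gamma_1$ and over $\Gamma_2$, while conditions (ii)--(iii) furnish the local boundedness of $F$ and its continuity at the corner that the disc-level theorem requires. I would then extend $F$ across the hat $\widehat{\X}(\Gamma_1,\Gamma_2;E,E)$ by the one-dimensional double-boundary cross theorem with singularities of \cite{pn5} (for $Z=\C$), and define the value of the sought extension at $(z_0,w_0)$ to be $\widehat F(0,0)$.

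Finally, the fiberwise values must be assembled into a genuine holomorphic extension. As in the proof of Theorem \ref{local_thm_2}, I would, for each fixed $z_0\in D$, define the vertical singular fiber as the set of $w$ belonging to the singular set of every admissible disc-extension, show that $\widehat f(z_0,\cdot)$ is well defined on the complement by a uniqueness argument (Lemma 4.5 in \cite{nv2}), and then glue over a sequence $A_{\delta_k}\times G_{\delta_k}$ exhausting the hat by means of Theorem \ref{thm_Jarnicki_Pflug_new_version}, so that the accumulated $\widehat M$ is relatively closed and locally pluripolar with $\widehat M\cap\X(A_0\cap U,B_0\cap V;D,G\cap V)\subset M$. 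The end-point inclusion $\X(\ldots)\setminus M\subset\End\big(\widehat{\X}(\ldots)\setminus\widehat M\big)$ follows from (\ref{eq_defi_pluri_measure}) and the definition of the hat, and the $\mathcal{A}$-limit is read off from the matching of the angular approach regions on the discs with $\mathcal{A}$. The delicate points I expect to fight with are the independence of $\widehat F(0,0)$ from the chosen pair of discs across overlapping target points, and the verification that the glued $\widehat M$ remains closed and pluripolar up to the corner, where conditions (ii)--(iii) must be used to control the boundary values uniformly.
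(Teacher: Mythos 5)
Your decomposition by the position of $(a,b)$ is broadly the right shape, and your interior and one\--boundary cases track the paper's (which starts from Theorem \ref{thm_Jarnicki_Pflug} when $(a,b)\in D\times G$ and propagates with Corollary \ref{cor_local_thm_2}, resp.\ Corollary \ref{cor_local_thm_1}, followed by the gluing over $A_\delta\times V_\delta$ that you describe). But your treatment of the corner case $a\in\partial D$, $b\in\partial G$ misses the decisive simplification and replaces it with a construction that has a genuine gap. The hypothesis $M\cap\big((A\cap\partial D)\times B\big)=M\cap\big(A\times(B\cap\partial G)\big)=\varnothing$, combined with the relative closedness of $M$ in $W$, forces $M$ to avoid an entire neighborhood cross $\X(A\cap U,B\cap V;D\cap U,G\cap V)$ of \emph{any} point $(a,b)\notin D\times G$ --- corner or not. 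So the initial bicross extension there is supplied at once by Theorem \ref{Nguyen_thm} (the $M=\varnothing$ case), with no singularities to track; this is the paper's entire ``Case $(a,b)\notin D\times G$''. You instead read that hypothesis as merely giving ``polar fibers'' for the pulled-back singularity set, and then launch a double Poletsky-disc pullback to a boundary cross $\X(\Gamma_1,\Gamma_2;E,E)$ and the two-boundary result of \cite{pn5}. Besides being unnecessary, this creates a real hole: once \emph{both} variables are disc-ified you obtain only the pointwise values $\widehat F(0,0)$ and retain holomorphy of the putative extension in neither original variable, so assembling these values into a function holomorphic in $(z_0,w_0)$ is a substantial piece of work (essentially a re-proof of Theorem \ref{Nguyen_thm}). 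Note that the proof of Theorem \ref{local_thm_2} disc-ifies only the $D$-factor precisely so that holomorphy in $w$ survives and Theorem \ref{thm_Jarnicki_Pflug_new_version} can be applied fiberwise; your ``delicate points I expect to fight with'' are exactly where this plan breaks, and you offer no mechanism to resolve them.

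Two further points. First, your closing gluing step over $A_{\delta_k}\times G_{\delta_k}$ invokes Theorem \ref{thm_Jarnicki_Pflug_new_version}, which requires bounded Euclidean domains, whereas in Theorem \ref{local_thm} the first factor $D$ is an open subset of an arbitrary manifold $X$ and the target hat $\widehat{\X}(A_0\cap U,B_0\cap V;D,G\cap V)$ involves all of $D$; this is precisely why the paper routes the propagation through Corollaries \ref{cor_local_thm_1} and \ref{cor_local_thm_2}, which rest on the disc machinery of Theorem \ref{thm_discs} and tolerate arbitrary manifolds. Second, your claim that conditions (ii)--(iii) are not needed when exactly one coordinate is on the boundary is incorrect for (ii): the local application of Theorem \ref{Nguyen_thm} near a point with $a\in\partial D$ already requires the local boundedness of $f$ along $(A\cap\partial D)\times(G\cup B)$. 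Only (iii) is vacuous outside the corner.
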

\begin{proof}
There  are two cases  to consider.

\smallskip

\noindent{\bf Case}  $(a,b)\not\in D\times G.$

      Invoking the hypothesis  on $M$
we see that  there exist an open neighborhood $U$ of $a$ in $X$ and  an open neighborhood $V$ of $b$ in $Y$  such
that
\begin{equation*}
\X\big(A\cap U,B\cap V;D\cap U,G\cap V\big)\cap M=\varnothing.
\end{equation*}
Moreover, we may assume without loss  of generality that  $U$ and $V$ are biholomorphic to some bounded Euclidean
domains. Using this we are able to apply Theorem \ref{Nguyen_thm} to $f$ restricted to $\X\big(A\cap U,B\cap V;D\cap
U,G\cap V\big).$ Consequently,
 we obtain  a mapping  $\hat{f}_0\in \Oc\big(
\widehat{\widetilde\X}(A\cap U,B\cap V;D\cap U,G\cap V),Z\big )$ which extends  $f.$  For $0<\delta<1$  let
\begin{eqnarray*}
A_{\delta}&:=&\left\lbrace z\in U\cap D:\  \omega(z,A_0\cap U,D\cap U)<\delta    \right\rbrace,\\ V_{\delta}&:=&
\left\lbrace w\in V\cap G:\  \omega(w,B_0\cap V,G\cap V)<1-\delta    \right\rbrace,\\ B_{\delta}&:=& V_{1-\delta}.
\end{eqnarray*}

Now let $0<\delta <\frac{1}{2}.$ Then  by the above discussion
  $\hat{f}_0$ is  holomorphic on   $A_{\delta}\times V_{\delta}.$
On the other hand, by the hypotheses $f(\cdot,w)$ is  holomorphic on $D\setminus M^w$ for all $w\in B\cap V.$
Therefore, we   are in the position to apply Corollary \ref{cor_local_thm_1} to the following mapping
 $f_{\delta}:\ \X\left(A_{\delta}, B\cap V;D,  V_{\delta})
 \right)\setminus M  ) \rightarrow  Z$  given by
\begin{equation*}
 f_{\delta}(z,w):=
\begin{cases}
 \hat{f}_0(z,w),
  & \qquad (z,w)\in  A_{\delta}\times V_{\delta}  , \\
  f(z,w), &   \qquad (z,w)\in \big(D\times (B\cap V)\big)\times \setminus M      .
\end{cases}
\end{equation*}
Consequently, we obtain a relatively
 closed locally pluripolar subset $\widehat{M}_{\delta}$ of
$$ \widehat{\X}\left(A_{\delta}, B_0\cap V;D, V_{\delta}
 \right)$$ and a mapping
 $$\hat{f}_{\delta}\in \Oc\Big(  \widehat{\X}\left(   A_{\delta}, B_0\cap V;D,V_{\delta}         \right) \setminus
 \widehat{M}_{\delta}  ,Z\Big)
 $$ which extends $f.$ Since  $\omega(\cdot,A_{\delta},D)\leq  \omega(\cdot,A_0\cap U,D)$ on $D,$
 it follows that
 $$  \widehat{\X}\left(A_0\cap U, B_0\cap V;D,V_{\delta}
 \right)\subset \widehat{\X}\left(A_{\delta}, B_0\cap V;D, V_{\delta}
 \right).
 $$
On the other hand,  by  Proposition 3.5 in \cite{nv2},
 \begin{equation*}
  \widehat{\X}\left(A_0\cap U, B_0\cap V;D,V_{\delta}
 \right)  \nearrow \widehat{\X}\left(A_0\cap U, B_0\cap V;D, G\cap V
 \right)\  \text{as}\  \delta\searrow 0.
  \end{equation*}
  Now  fix a sequence  $(\delta_k)_{k=1}^{\infty}$ such that  $0<\delta_{k}<1$ and $\delta_k\searrow 0^{+}.$
  Therefore, using the last  equality, we may glue  $(\widehat{M}_{\delta_k})_{k=1}^{\infty}$  (take again the
  smallest singular sets)  together in order to
  obtain  a  relatively closed  pluripolar   subset $\widehat{M}$  of  $  \widehat{\X}\left(A_0\cap U, B_0\cap V;D,
  G\cap V
 \right)               $
and an extension mapping $\hat{f}$ holomorphic on $  \widehat{\X}\left(A_0\cap U, B_0\cap V;D, G\cap V
 \right)\setminus \widehat{M}$ with the desired  properties of the theorem.

\noindent{\bf Case} $(a,b)\in D\times G.$

Choose  an open neighborhood $U\subset D$ of $a$ (resp. $V\subset G$ of $b$) which is  biholomorphic to   a bounded
Euclidean domain. Using the hypotheses , we are able to apply
 Theorem \ref{thm_Jarnicki_Pflug} to $f|_{\X(A\cap U,B\cap V;U,V)}.$
Consequently, we obtain a relatively  closed locally pluripolar  subset $\widehat{M}_0$ of
 $\widehat{\X}(A_0\cap U,B_0\cap V;U,V)$  and  a mapping  $\hat{f}_0\in \Oc\big(
\widehat{\X}(A_0\cap U,B_0\cap V;U,V)\setminus  \widehat{M}_0,Z\big )$ which extends  $f.$ The remaining part of the proof follows  along the same lines as those given in the previous
case.  The only difference  is that  we  will  apply Corollary \ref{cor_local_thm_2} instead of Corollary
\ref{cor_local_thm_1}.
\end{proof}

Finally, we arrive  at the

\smallskip

\noindent {\bf Proof of the Main Theorem.} By  Proposition \ref{Nguyen_prop}, we only need to check  the condition
stated  in that  proposition. In the sequel we  are under  the hypotheses and notation  introduced  in that condition.
The proof will be divided into two steps.

 \noindent{\bf Step 1.}
{\it  Under the hypothesis  and notation of Part 1) of Proposition 3.11, let $b_0\in \overline{B}_0.$   Then there
exists an open set neighborhood $V$ of $b_0$ in $Y$  and  a relatively closed locally  pluripolar subset $\widehat{M}$
of $\widehat{\X}(A_0,B_0\cap V;D,V\cap G)$
such that:
 \begin{itemize}
 \item[$\bullet$]
 $ \X\big (A_0, B_0\cap V; D, G \cap V   \big)\setminus M
 \subset \End\big(\widehat{\X}\big (A_0, B_0\cap V; D, G\cap V     \big)\setminus \widehat{M}\big);
 $
 \item[$\bullet$]
 $ \X\big (A_0, B_0\cap V; D, G \cap V   \big)\cap\widehat M\subset M;$
\item[$\bullet$]
 for  every mapping    $f:\ W\setminus M\longrightarrow Z$
   satisfying conditions (i)--(iii) of the Main Theorem,
     there exists  a unique  mapping
$\hat{f}\in\Oc\big( \widehat{\X}\big (A_0, B_0\cap V; D, G\cap V    \big)        \setminus \widehat{M}
,Z\big)$  such that $\widehat f$ admits the $\mathcal{A}$-limit
$f(\zeta,\eta)$ at every point
  $$(\zeta,\eta)\in   \X\big (A_0, B_0\cap V; D, G \cap V   \big)\setminus M  .$$
\end{itemize}
}

 Since
$\overline{A}_0$ is compact, we may apply Theorem \ref{local_thm} to all pairs $(a,b_0),$  $a\in\overline{A}_0.$
Consequently, we may find a finite number of points $a_1,\ldots,a_M,$  their respective open neighborhoods
$U_{1},\ldots,U_{M}$ in $X$ and an open neighborhood $V$ of $b_0$ in $Y$ with the following properties:
 \begin{itemize}
\item[$\bullet$] $\overline{A}_0\subset\bigcup\limits_{j=1}^MU_{j};$
 \item[$\bullet$] there exist  a
relatively  closed locally  pluripolar subset
 $\widehat{M}_{j}$ of $ \widehat{\X}\big(A_0\cap U_{j}, B_0\cap V; D,G\cap V\big)         $ and a   mapping
$\hat{f}_j\in\Oc\big( \widehat{\X}\big(A_0\cap U_{j}, B_0\cap V; D,G\cap V\big)              \setminus
\widehat{M}_{j} ,Z\big)$ which admits the $\mathcal{A}$-limit $f(\zeta,\eta)$ at every point
  $(\zeta,\eta)\in  \X\big(A_0\cap U_{j}, B_0\cap V; D,G\cap V\big)\setminus M  .$
\end{itemize}

For  any  $0<\delta<\frac{1}{2}$ put
\begin{eqnarray*}
B_{\delta} &:=& \{w\in G\cap V:\  \omega(w,B_0\cap V, G\cap V)<\delta\},\qquad V_{\delta}:=B_{1-\delta},\\
A_{j,\delta}&:=&\{z\in D\cap U_{j}:\  \omega(z,A_0\cap U_{j}, D)<\delta\},\qquad j=1,\ldots,M;
\\
D_{j,\delta}&:=&A_{j,1-\delta},\qquad A_{\delta}:= \bigcup\limits_{j=1}^M A_{j,\delta},\qquad D_{\delta}:=
\bigcup\limits_{j=1}^MD_{j,\delta}.
\end{eqnarray*}
Observe that $A_{\delta}$ and $D_{\delta}$  are open subsets of $D,$ and $B_{\delta}$ and $V_{\delta}$  are open
subsets of $V.$ Moreover,  $D_{\delta}\nearrow D,$    $V_{\delta}\nearrow V$ and
\begin{equation}\label{eq_limit_prop_semi_local}
\omega(\cdot,A_0, D_{\delta})\searrow \omega(\cdot,A_0, D)\quad\text{and}\quad\omega(\cdot,B_0\cap V,
V_{\delta})\searrow \omega(\cdot,B_0, V)
\end{equation}
 as $\delta\searrow 0.$ Using the above  constructions, we may glue the
 sets $\widehat{M}_{j}$ (resp. the  mappings $\big( \hat{f}_{j}\big)_{j=1}^M$) together  in order to obtain   a
relatively  closed locally pluripolar subset
 $M_{\delta}$ of $\X(A_{\delta},B_{\delta};D_{\delta},V_{\delta})$ and a   mapping
$\widetilde{f}_{\delta}\in\Oc_s\big( \X(A_{\delta},B_{\delta};D_{\delta},V_{\delta})           \setminus M_{\delta}
,Z\big).$
 Applying Theorem \ref{thm_discs} to $\widetilde{f}_{\delta}, $  we  get   a relatively  closed locally
pluripolar subset
 $\widehat{M}_{\delta}$ of $\widehat{\X}(A_{\delta},B_{\delta};D_{\delta},V_{\delta})$ and a   mapping
$\widehat{f}_{\delta}\in\Oc\big( \widehat{\X}(A_{\delta},B_{\delta};D_{\delta},V_{\delta}) \setminus
\widehat{M}_{\delta} ,Z\big).$ Now  using (\ref{eq_limit_prop_semi_local}), we may glue the mappings $\big(
\hat{f}_{\delta}\big)_{0<\delta<1}$ together in order to obtain a relatively closed locally pluripolar subset
 $\widehat{M}$ of $\X(A_0,B_0\cap V ;D,V)$ and a   mapping
$\widehat{f}\in\Oc\big( \widehat{\X}(A_0,B_0\cap V ;D,V) \setminus \widehat{M} ,Z\big).$ Hence, Step 1 is  complete.

\smallskip

\noindent{\bf Step 2.  } {\it End  of the proof.}

Note that $\overline B_0$ is compact. Applying  step 1 we may now proceed  in exactly the same way as we did in step 1
starting from Theorem 5.1. Consequently,  Step 2 follows. Details are left to the interested reader.

Hence the condition in Proposition \ref{Nguyen_prop} are verified and, therefore, the proof of the Main Theorem is
complete.

\section{Applications}  \label{section_applications}

In \cite{nv2} the first author gives various  applications of the Main Theorem  for the case where $M=\varnothing$  using three different systems of approach regions. These  are the canonical one,  the system of angular approach regions and the system of conical approach regions. We only give here  some applications of  the system of conical approach regions.   The reader may try to  treat the  first two cases, that is, to translate Theorem 10.2 and 10.3 of \cite{nv2} into the context of the Main Theorem.

Let $D\subset\C^n$ be a  domain  and $A\subset\partial D.$ We suppose in addition that
 $D$ is  {\it locally $\mathcal{C}^2$ smooth} on $A$ (i.e.
for any $\zeta\in A,$ there exist an open neighborhood $U=U_{\zeta}$ of
 $\zeta$ in $\C^n$ and a   real function
 $\rho=\rho_{\zeta}\in \mathcal{C}^2(U)$ such that $D\cap U=\lbrace z\in U:\
 \rho(z)<0\rbrace$ and $d\rho(\zeta)\not=0$).
 We  define {\it the system   of   conical approach regions supported on $A$:}
   $\mathcal{A}=\big(\mathcal{A}_{\alpha}(\zeta)\big)_{\zeta\in\overline{ D},\  \alpha\in I_{\zeta}}$
as  follows:
  \begin{itemize}
  \item[$\bullet$] If $\zeta\in  \overline{ D}\setminus A,$  then $\big(\mathcal{A}_{\alpha}(\zeta)\big)_{ \alpha\in I_{\zeta}}$
coincide with the  canonical    approach regions.
 \item[$\bullet$] If $\zeta\in  A,$ then
 \begin{equation*}
 \mathcal{A}_{\alpha}(\zeta):=\left\lbrace  z\in D:\  \vert  z-\zeta\vert <\alpha\cdot \dist(z,T_{\zeta})  \right\rbrace,
 \end{equation*}
where $I_{\zeta}:=(1,\infty)$  and $ \dist(z,T_{\zeta})$ denotes the Euclidean    distance  from the  point $z$ to the       to the tangent hyperplane $T_{\zeta}$  of $\partial D$ at $\zeta.$
\end{itemize}

We can also   generalize the previous construction to  a  global situation:

{\it  $X$ is an  arbitrary complex manifold, $D\subset  X$ is an open set and  $A\subset \partial D$ is  a  subset
with  the property that $D$ is   locally $\mathcal{C}^2$ smooth on $A.$
}

Let  $X$  be  an arbitrary  complex manifold  and $D\subset  X$  an open subset.
 We say  that  a set   $A\subset \partial D$ is {\it locally contained in a  generating manifold} if  there  exist  an
 (at most countable)
 index set $J\not=\varnothing,$ a  family  of open subsets
 $(U_j)_{j\in J}$  of $X$ and  a  family  of
   {\it generating manifolds} \footnote{ A differentiable  submanifold  $\mathcal{M}$ of a complex manifold
  $X$ is  said to be a {\it generating
  manifold}
  if  for all  $\zeta\in\mathcal{M},$   every  complex vector subspace of  $T_{\zeta}X$ containing
  $T_{\zeta}\mathcal{M}$
coincides  with
    $T_{\zeta}X.$}   $(\mathcal{M}_j)_{j\in J}$ such that
    $A\cap U_j\subset \mathcal{M}_j,$  $j\in J,$ and that $A\subset  \bigcup_{j\in J}  U_j.$
The dimensions of $\mathcal{M}_j$ may  vary according  to $j\in J.$

Suppose that  $A\subset \partial D$ is  locally contained in a generating manifold. Then we say that $A$ is {\it of
positive size} if under the above notation  $\sum_{j\in J}\mes_{ \mathcal{M}_j}(A\cap U_j)>0,$ where
$\mes_{\mathcal{M}_j}$ denotes the    Lebesgue measure on $\mathcal{M}_j.$
 A point $a\in A$ is  said  to be  a {\it density point} of $A$  if it is a density point of $A\cap U_j$ on
 $\mathcal{M}_j$ for some $j\in J.$
 Denote  by  $A^{'}$ the set of density points   of $A.$

 Suppose now that, in addition,  $A\subset \partial D$ is  of positive size.
 We  equip $D$  with the  system of conical approach regions   supported on   $A.$
  Using the   work of  B. Coupet (see  Th\'eor\`eme 2 in \cite{co}),
 one can show that\footnote{  A complete proof will be  available in \cite{nv3}.}   $A$  is  locally  pluriregular  at
 all density points
  of $A$ and $A^{'}\subset \widetilde{A}.$  Consequently, it  follows from  Definition
\ref{defi_pluri_measure}  that
\begin{equation*}
\widetilde{\omega}(z,A,D)\leq  \omega(z,A^{'},D),\qquad  z\in D.
\end{equation*}
This estimate, combined with the Main Theorem, implies the  following result.

  \begin{theorem} \label{application1_3}
  Let $X,\ Y$  be  two complex  manifolds,
  let $D\subset X,$ $ G\subset Y$ be  two connected open sets, and let
  $A$ (resp. $B$) be  a  subset of  $\partial D$ (resp.
  $\partial G$).
   $D$  (resp.  $G$) is  equipped with a
  system of conical approach  regions
  $\big(\mathcal{A}_{\alpha}(\zeta)\big)_{\zeta\in\overline{ D},\  \alpha\in I_{\zeta}}$
  (resp.  $\big(\mathcal{A}_{\beta}(\eta)\big)_{\eta\in\overline{ G},\  \beta\in I_{\eta}}$)  supported on  $A$
  (resp. on $B$).
   Suppose  in  addition that
    $A$  and  $B$  are  of positive size.
   Define
   \begin{eqnarray*}
   W^{'}  &:= &\X(A^{'},B^{'};D,G),\\
    \widehat{W^{'}}  &:= &\left\lbrace  (z,w)\in D\times G:\  \omega(z,A^{'},D)+\omega(w,B^{'},G)<1
    \right\rbrace,
   \end{eqnarray*}
     where $A^{'}$  (resp.  $B^{'}$) is  the set of  density  points   of $A$  (resp.  $B$).
Let $M$ be  a  relatively closed subset of  $W$  with the following properties:
\begin{itemize}
\item[$\bullet$] $M$ is  thin in  fibers  (resp.  locally   pluripolar in fibers) over  $A$ and  over $B;$
    \item[$\bullet$] $M\cap (A\times B)=\varnothing.$
\end{itemize}
Then  there exists a relatively closed  analytic  (resp. a  relatively  closed locally  pluripolar) subset
 $\widehat{M}$ of $\widehat{W^{'}}$ such that
for  every mapping    $f:\ W\setminus M\longrightarrow Z$
   satisfying the following  conditions:
   \begin{itemize}
   \item[(i)]    $f\in\Cc_s(W\setminus M,Z)\cap \Oc_s(W^{\text{o}}\setminus M,Z);$
    \item[(ii)] $f$ is locally bounded   along  $\X(A,B;D,G)
    \setminus M;$
    \item[(iii)]          $f|_{(A\times B)}$ is  continuous,
    \end{itemize}
     there exists  a unique  mapping
$\hat{f}\in\Oc(\widehat{W^{'}}\setminus \widehat{M} ,Z)$ which admits the $\mathcal{A}$-limit $f(\zeta,\eta)$ at
every point
  $(\zeta,\eta)\in   W^{'}\setminus M  .$
\end{theorem}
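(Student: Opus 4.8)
The plan is to deduce the statement directly from the Main Theorem, the only genuinely new input being the estimate $\widetilde{\omega}(z,A,D)\le\omega(z,A',D)$ (and its $G$-analogue) established just above. Since $A$ is of positive size, $A'$ is non-empty and non-pluripolar, and the inclusion $A'\subset\widetilde{A}$ together with monotonicity of the extremal function gives $\widetilde{\omega}(\cdot,A,D)=\omega(\cdot,\widetilde{A},D)\le\omega(\cdot,A',D)$, and likewise on the $G$-side. Adding the two inequalities shows at once that $\widehat{W'}\subset\widehat{\widetilde{W}}$, while $A'\subset\widetilde{A}$ and $B'\subset\widetilde{B}$ give $W'\subset\widetilde{W}$. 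Thus the region on which I must produce the extension is contained in the one furnished by the Main Theorem, and the whole argument reduces to checking the hypotheses and then restricting.

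First I would verify that the data meet all assumptions of the Main Theorem. The decisive (and pleasantly automatic) point is that $A=A^{\ast}$ and $B=B^{\ast}$ hold here for free: because $A\subset\partial D$, its closure stays in the closed set $\partial D$, which is disjoint from $D$, so $\overline{A}\cap D=\varnothing$; hence the set $\{a\in\overline{A}\cap D:\ A\text{ locally pluriregular at }a\}$ is empty and $A^{\ast}=A\cap\partial D=A$ (similarly $B^{\ast}=B$). The requirement $\widetilde{\omega}(\cdot,A,D)<1$ on $D$ follows from non-pluripolarity of $\widetilde{A}$, connectedness of $D$, and the maximum principle for plurisubharmonic functions, since an identically-one extremal function is incompatible with $\widetilde{A}$ being non-pluripolar; the same holds on $G$. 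Finally, since $A\subset\partial D$ and $B\subset\partial G$ we have $A\cap\partial D=A$, $B\cap\partial G=B$, so $\X(A\cap\partial D,B\cap\partial G;D,G)=\X(A,B;D,G)=W$; consequently the emptiness condition $M\cap\big((A\cap\partial D)\times B\big)=M\cap\big(A\times(B\cap\partial G)\big)=\varnothing$ is exactly the hypothesis $M\cap(A\times B)=\varnothing$, the fiber condition on $M$ is assumed, and conditions (i)--(iii) coincide verbatim with those of the Main Theorem (local boundedness in (ii) is along $W\setminus M$, and (iii) is provided by continuity of $f$ on $A\times B$).

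Granting this, I would apply the Main Theorem to obtain a relatively closed analytic (resp.\ locally pluripolar) set $\widehat{M}_0\subset\widehat{\widetilde{W}}$ with $\widehat{M}_0\cap\widetilde{W}\subset M$ and, for every admissible $f$, a unique $\hat{f}\in\Oc(\widehat{\widetilde{W}}\setminus\widehat{M}_0,Z)$ admitting the $\mathcal{A}$-limit $f(\zeta,\eta)$ at each $(\zeta,\eta)\in\widetilde{W}\setminus M$. I then set $\widehat{M}:=\widehat{M}_0\cap\widehat{W'}$, which is relatively closed and analytic (resp.\ locally pluripolar) in the open set $\widehat{W'}$, and restrict $\hat{f}$ to $\widehat{W'}\setminus\widehat{M}\subset\widehat{\widetilde{W}}\setminus\widehat{M}_0$, obtaining a holomorphic map into $Z$.

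The step I expect to be the main obstacle is transferring the boundary behaviour from $\widetilde{W}$ down to the smaller region $W'$, i.e.\ verifying $W'\setminus M\subset\End(\widehat{W'}\setminus\widehat{M})$ so that the $\mathcal{A}$-limit assertion survives the restriction (being an end-point of $\widehat{W'}$ is strictly harder than being one of $\widehat{\widetilde{W}}$). Here I would again invoke the maximum principle to secure the \emph{strict} inequalities $\omega(\cdot,A',D)<1$ on $D$ and $\omega(\cdot,B',G)<1$ on $G$: for $(\zeta,\eta)\in W'$ with, say, $\zeta\in A'$ and $\eta\in G$, local pluriregularity of $A'$ at $\zeta$ forces $\omega(z,A',D)\to 0$ along the approach regions while $\omega(w,B',G)\to\omega(\eta,B',G)<1$, so a full product of sufficiently small approach pieces lies inside $\widehat{W'}$; the remaining positions of $(\zeta,\eta)$ are symmetric. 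Since $(\zeta,\eta)\in W'\setminus M\subset\widetilde{W}\setminus M$ and $\widehat{M}_0\cap\widetilde{W}\subset M$, the point avoids $\widehat{M}_0$, whence $(\zeta,\eta)\in\End(\widehat{W'}\setminus\widehat{M})$. The $\mathcal{A}$-limit of the restricted $\hat{f}$ then coincides with that of $\hat{f}$ on the larger set, namely $f(\zeta,\eta)$, because it is merely taken over a subset of the original approach family; uniqueness follows from the identity principle exactly as in the Main Theorem, completing the deduction.
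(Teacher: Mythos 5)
Your proposal is correct and is essentially the paper's own argument: the paper derives this theorem in one line from the estimate $\widetilde{\omega}(\cdot,A,D)\le\omega(\cdot,A',D)$ (coming from Coupet's theorem, which gives local pluriregularity of $A$ at density points and $A'\subset\widetilde A$) combined with the Main Theorem, and your write-up simply fills in that deduction — checking $A=A^\ast$, the inclusions $W'\subset\widetilde W$ and $\widehat{W'}\subset\widehat{\widetilde W}$, and the end-point/restriction step. No further comment is needed.
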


The second  application is  a very general mixed cross   theorem.

 \begin{theorem} \label{application2_3}
  Let $X,\ Y$  be  two complex  manifolds,
  let $D\subset X,$ $ G\subset Y$ be connected  open sets,  let
  $A$ be a subset of  $\partial D,$   and let  $B$  be  a subset of
  $ G$.
   $D$ is  equipped with the
  system of conical approach regions
  $\big(\mathcal{A}_{\alpha}(\zeta)\big)_{\zeta\in\overline{ D},\  \alpha\in I_{\zeta}}$  supported on  $A$
   and  $G$  is equipped with the canonical   system of
   approach  regions
  $\big(\mathcal{A}_{\beta}(\eta)\big)_{\eta\in\overline{ G},\  \beta\in I_{\eta}}$.
   Suppose in addition that
    $A\subset \partial D$  is  of positive size and that  $B=B^{\ast}\not=\varnothing.$
   Define
    \begin{eqnarray*}
   W^{'}  &:= &\X(A^{'},B;D,G),\\
    \widehat{W^{'}}  &:= &\left\lbrace  (z,w)\in D\times G:\  \omega(z,A^{'},D)+\omega(w,B,G)<1     \right\rbrace,
   \end{eqnarray*}
    where $A^{'}$  is the set of  density points  of $A.$
    Let $M$ be  a  relatively subset of  $W$  with the following properties:
\begin{itemize}
\item[$\bullet$] $M$ is  thin  in  fibers  (resp.  locally   pluripolar in fibers) over  $A$ and  over $B;$
 \item[$\bullet$] $M\cap (A\times B)=\varnothing.$
\end{itemize}
Then  there exists a relatively closed  analytic  (resp. a  relatively  closed locally  pluripolar) subset
 $\widehat{M}$ of $\widehat{W^{'}}$ such that $   W^{'}\setminus M\subset \End(\widehat{W^{'}}\setminus
 \widehat{M})$
 and that
for  every mapping    $f:\ W\setminus M\longrightarrow Z$
   satisfying the following  conditions:
   \begin{itemize}
   \item[(i)]    $f\in\Cc_s(W\setminus M,Z)\cap \Oc_s(W^{\text{o}}\setminus M,Z);$
    \item[(ii)] $f$ is locally bounded   along  $(A\times G)
    \setminus M,$
    \end{itemize}
     there exists  a unique  mapping
$\hat{f}\in\Oc(\widehat{W^{'}}\setminus \widehat{M} ,Z)$ which admits the $\mathcal{A}$-limit $f(\zeta,\eta)$ at
every point
  $(\zeta,\eta)\in    W^{'}\setminus M  .$
\end{theorem}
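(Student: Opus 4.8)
The plan is to deduce Theorem \ref{application2_3} directly from the Main Theorem, using the estimate $\widetilde{\omega}(\cdot,A,D)\le\omega(\cdot,A^{'},D)$ recorded just above its statement. First I would check that the cross $\X(A,B;D,G)$, equipped with the conical approach regions on $A$ and the canonical ones on $G$, satisfies all hypotheses of the Main Theorem. The equality $A=A^{\ast}$ is automatic: since $A\subset\partial D$ and $\partial D$ is closed, $\overline{A}\cap D=\varnothing$, so $A^{\ast}=A\cap\partial D=A$; the equality $B=B^{\ast}$ is assumed. Because $A$ is of positive size, the discussion preceding the theorem gives $\varnothing\ne A^{'}\subset\widetilde A$, so $\widetilde A$ is a nonempty locally pluriregular set and, $D$ being connected, $\widetilde{\omega}(\cdot,A,D)\le\omega(\cdot,A^{'},D)<1$ on $D$; similarly $\widetilde B\ne\varnothing$ forces $\widetilde{\omega}(\cdot,B,G)<1$ on $G$. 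Finally, since $A\cap\partial D=A$ and $B\cap\partial G=\varnothing$, the two emptiness requirements on $M$ collapse to the hypothesis $M\cap(A\times B)=\varnothing$, while conditions (i)--(ii) here coincide with (i)--(ii) of the Main Theorem (note $\X(A\cap\partial D,B\cap\partial G;D,G)=A\times G$), and condition (iii) of the Main Theorem becomes vacuous.

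Applying the Main Theorem then produces a relatively closed analytic (resp. locally pluripolar) set $\widehat{M}_0\subset\widehat{\widetilde{W}}$ and, for each admissible $f$, a unique $\hat f_0\in\Oc(\widehat{\widetilde{W}}\setminus\widehat{M}_0,Z)$ admitting the $\mathcal{A}$-limit $f(\zeta,\eta)$ at every $(\zeta,\eta)\in\widetilde{W}\setminus M$. To pin down the extension region I would combine the Coupet estimate $\widetilde{\omega}(\cdot,A,D)\le\omega(\cdot,A^{'},D)$ with $\widetilde{\omega}(\cdot,B,G)=\omega(\cdot,B^{\ast},G)=\omega(\cdot,B,G)$ (the first equality from Subsection \ref{Subsection_approach_regions}, the second from $B=B^{\ast}$). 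These yield $\widetilde{\omega}(z,A,D)+\widetilde{\omega}(w,B,G)\le\omega(z,A^{'},D)+\omega(w,B,G)$, whence $\widehat{W^{'}}\subset\widehat{\widetilde{W}}$. I would then set $\widehat{M}:=\widehat{M}_0\cap\widehat{W^{'}}$, which is relatively closed analytic (resp. locally pluripolar) in $\widehat{W^{'}}$, and $\hat f:=\hat f_0|_{\widehat{W^{'}}\setminus\widehat{M}}$.

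It remains to verify that $\hat f$ admits the $\mathcal{A}$-limit $f(\zeta,\eta)$ at every $(\zeta,\eta)\in W^{'}\setminus M$. Since $A^{'}\subset\widetilde A$ and $B\subset G$, the portions $A^{'}\times G$ and $D\times\widetilde B$ of $W^{'}$ lie inside $\widetilde{W}$, so there the conclusion is inherited verbatim from the Main Theorem. The only genuinely new points are the interior ones $(z_0,w_0)\in D\times(B\setminus\widetilde B)$, where $W^{'}$ is strictly larger than $\widetilde{W}$; here the $\mathcal A$-limit equals the value $\hat f(z_0,w_0)$, and I must show it agrees with $f(z_0,w_0)$. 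For this I would fix $w_0\in B$ and compare $\hat f(\cdot,w_0)$ and $f(\cdot,w_0)$. Because $w_0\in B=B^{\ast}$ gives $\omega(w_0,B,G)=0$, the slice $(\widehat{W^{'}})_{w_0}$ is all of $D$, so $\hat f(\cdot,w_0)$ is holomorphic on $D$ off a pluripolar set; by the Main Theorem it has conical boundary value $f(\zeta,w_0)$ at every $\zeta\in A^{'}$ (as $(\zeta,w_0)\in\widetilde A\times G\subset\widetilde{W}$), while $f(\cdot,w_0)\in\Oc_s\cap\Cc_s$ has the same boundary value there by hypothesis (i). Since $A^{'}$ is of positive size, hence non-pluripolar, and $\omega(\cdot,A^{'},D)<1$, a two-constants (maximum-principle) argument applied to $\log$ of the modulus of each component of $\hat f(\cdot,w_0)-f(\cdot,w_0)$ (after a local holomorphic embedding of $Z$, using the local boundedness from hypothesis (ii)) forces $\hat f(\cdot,w_0)=f(\cdot,w_0)$ on $D$. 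This boundary-uniqueness identification on $D\times(B\setminus\widetilde B)$ is the main obstacle; the restriction step and the usual identity-principle argument for uniqueness of $\hat f$ are then routine.
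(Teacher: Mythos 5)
Your derivation is exactly the route the paper intends: Section \ref{section_applications} gives no separate proof of Theorem \ref{application2_3}, but sets up the Coupet estimate $\widetilde{\omega}(\cdot,A,D)\leq\omega(\cdot,A^{'},D)$ precisely so that the result follows by verifying the hypotheses of the Main Theorem (with $A=A^{\ast}$ automatic for $A\subset\partial D$, condition (iii) vacuous, and (ii) reducing to local boundedness along $(A\times G)\setminus M$) and restricting the resulting extension from $\widehat{\widetilde{W}}$ to $\widehat{W^{'}}\subset\widehat{\widetilde{W}}$. Your additional two-constants argument identifying $\hat f$ with $f$ on $D\times(B\setminus\widetilde B)$ addresses a point the paper leaves implicit, and is consistent with its framework.
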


V.-A.  Nguy{\^e}n, School of Mathematics, Korea Institute  for Advanced Study, 207-43Cheongryangni-2dong,
Dongdaemun-gu, Seoul 130-722, Korea.

{\tt vietanh@kias.re.kr}

\

\noindent

Peter Pflug, Carl von  Ossietzky Universit\"{a}t Oldenburg, Institut f\"{u}r   Mathematik, Postfach 2503, D--26111,
 Oldenburg, Germany.

{\tt pflug@mathematik.uni-oldenburg.de}

\end{document}